% !TEX TS-program = pdflatex
% !TEX encoding = UTF-8 Unicode

% This is a simple template for a LaTeX document using the "article" class.
% See "book", "report", "letter" for other types of document.

\documentclass[reqno, 12pt, superscriptaddress, twoside, tightenlines, notitlepage]{revtex4-1} % use larger type; default would be 10pt

\usepackage[utf8]{inputenc} % set input encoding (not needed with XeLaTeX)
\usepackage{tikz}
\usepackage{geometry} % to change the page dimensions
\usepackage{youngtab}
\geometry{a4paper} % or letterpaper (US) or a5paper or....

\usepackage{graphicx} % support the \includegraphics command and options
\usepackage{amsmath}
\usepackage{amssymb}
\usepackage{amsthm}
\newtheorem{theorem}{Theorem}
\newtheorem{proposition}{Proposition}
\newtheorem{lemma}{Lemma}
\newtheorem{corollary}{Corollary}
\newtheorem{definition}{Definition}
%%% END Article customizations

%%% The "real" document content comes below...

%\date{} % Activate to display a given date or no date (if empty),
         % otherwise the current date is printed 

\begin{document}
\title{Soliton cellular automaton associated with $D_n^{(1)}$-crystal $B^{2,s}$}
\author{Kailash C. Misra}
\email{misra@ncsu.edu}
\affiliation{
Department of Mathematics\\ North Carolina State University\\ 
Raleigh, NC 27695-8205}
\author{Evan A. Wilson}
\email{wilsonea@ime.usp.br}
\affiliation{
Department of Mathematics\\ North Carolina State University\\ 
Raleigh, NC 27695-8205}
\affiliation{
Instituto de Matem\'{a}tica e Estat\'{i}stica\\Universidade de S\~{a}o Paulo}
\begin{abstract}
\noindent\normalsize{A solvable vertex model in ferromagnetic regime gives rise to a soliton cellular automaton which is a discrete dynamical system in which site variables take on values in a finite set. We study the scattering of a class of soliton cellular automata associated with the $U_q(D_n^{(1)})$-perfect crystal $B^{2,s}$. We calculate the combinatorial $R$ matrix for all elements of $B^{2,s} \otimes  B^{2,1}$. In particular, we show that the scattering rule for our soliton cellular automaton can be identified with the combinatorial $R$ matrix for $U_q(A_1^{(1)}) \oplus U_q(D_{n-2}^{(1)})$-crystals.}
\end{abstract}

\maketitle

\section{Introduction} 
A cellular automaton is a dynamical system in which points in the one-dimensional space lattice are assigned discrete values in a finite set which evolve according to a deterministic rule.  Soliton cellular automata (SCA) are a  kind of cellular automata which possess stable configurations analogous to solitons in integrable partial differential equations. Solitons move with constant velocity proportional to the length if there is no collision. After collision their lengths are preserved but the phases are shifted. Moreover, SCAs have many conserved quantities.
The simplest example of a SCA is the Takahashi-Satsuma's automaton \cite{TS} where the site variables take on two values $\{0, 1\}$. This SCA gives rise to a nonlinear dynamical system. Subsequently, this was generalized to other SCAs where the site variables take on more than two values (\cite{T}, \cite{TNS}, \cite{TTM}). It was then found that these systems can be described by perfect crystals \cite{KMN} for the quantum affine algebra $U_q'(A_{n}^{(1)})$ (\cite{FOY}, \cite{HHIKTT}). In fact, it was shown in \cite{FOY} that the phase shift of the $U_q'(A_n^{(1)})$ SCA was related to the energy function of $U_q'(A_{n}^{(1)})$ perfect crystals. In \cite{HKT}, a class of SCAs associated with the perfect crystals for the non-exceptional quantum affine Lie algebras given in \cite{KKM} was constructed.  In \cite{Yd2} (resp. \cite{MOW}) the SCA associated with the $U_q'(D_4^{(3)})$ (resp. $U_q'(G_2^{(1)})$) perfect crystals given in \cite{KMOY} (resp. \cite{MMO}) have been constructed. In this formulation the time evolution operator is given by the row-to-row transfer matrix of integrable vertex models at $q=0$, represented as a product of the combinatorial $R$ matrix of perfect crystals.   Using this approach, the scattering rules for the SCA associated with the perfect crystals given in \cite{KKM} were determined in \cite{HKOTY3}. Among these SCAs, the SCA associated with the quantum affine algebra $U_q'(D_{n}^{(1)})$ is of fundamental importance, because the other ones can be imbedded into this one \cite{KTT}.

It is known that the perfect crystals for quantum affine Lie algebras are crystals for certain Kirillov-Reshetikhin (KR) modules \cite{KR}. Fourier, Okado and Schilling (\cite{FOS1},\cite{FOS2}) have proved the existence of perfect crystals for quantum affine algebras of classical types conjectured in (\cite{HKOTY1}, \cite{HKOTY2}). In particular, in \cite{FOS1} explicit descriptions of KR-crystals $B^{r,s}$ for any Dynkin node $ r \not= 0$ and any positive integer $s$ are given for each quantum affine algebras of classical types. In \cite{FOS2} it has been shown that if $s$ is a multiple of $t_r= {\rm max}\{1, 2/(\alpha_r, \alpha_r)\}$, then the KR-crystal $B^{r, s}$ is perfect of level $s/t_r$. So far all SCAs constructed are associated with the perfect crystal $B^{1, s}$ with the exception of the one in \cite{M} where the SCA associated with the $U_q'(D_n^{(1)})$ perfect crystal $B^{n,s}$ has been constructed. It is conjectured that the scattering rule for the SCA associated with a perfect crystal $B^{r,s}$ will be given by the $R$ matrix for the crystal of the quantum affine algebra 
$U_q'(g'^{(1)}) \oplus U_q'(g''^{(1)})$, where $g'$ and $g''$ are the simple Lie algebras associated with the two connected components obtained after removing the Dynkin node $r$ from the finite Dynkin diagram.

In this paper, using the realizations of $B^{2,s}$ given in \cite{FOS1} (see also \cite{SS}) we compute the combinatorial $R$ matrix $B^{2,s}\otimes B^{2,1} \tilde{\to}B^{2,1}\otimes B^{2,s}$ for $U_q'(D_n^{(1)})$, then use it to find the scattering rule of the SCA associated with 
$B^{2,s}$.  We see that solitons of length $s$ are parameterized by the $U_q(A_1^{(1)}) \oplus U_q(D_{n-2}^{(1)})$-crystal $\widehat{B}^{1,s} \times B^{1,s}$. Furthermore, when two solitons collide, the scattering rule is described by the combinatorial $R$ matrix $\widehat{B}^{1,s_1} \otimes B^{1,s_2}\tilde{\to} B^{1,s_2}\otimes \widehat{B}^{1,s_1}, s_1>s_2$. Thus we have shown that the above conjecture holds in this case.

\section{Preliminaries}
The infinite dimensional Kac-Moody algebra $\mathfrak{g}=D_n^{(1)}$ is determined by the following Dynkin diagram:
\begin{center}
\begin{picture}(155,30)
	% Circles
	
	\put(20,5){\circle*{5}}
	\put(45,5){\circle*{5}}
	\put(45,30){\circle*{5}}
	\put(70,5){\circle*{5}}
	\put(105,5){\circle*{5}}
	\put(125,5){\circle*{5}}
	\put(125,30){\circle*{5}}
	\put(150,5){\circle*{5}}
	% Lines
	\put(20,5){\line(1,0){25}}
	\put(45,5){\line(0,1){25}}
	\put(45,5){\line(1,0){25}}
	\put(105,5){\line(1,0){25}}
	\put(125,5){\line(0,1){25}}
	\put(125,5){\line(1,0){25}}
	% Labels
	
	\put(17,-7){0}
	\put(42,-7){2}
	\put(79,2){$\cdots$}
	\put(42,33){1}
	\put(67,-7){3}
	\put(113,33){$n-1$}
	\put(113,-7){$n-2$}
	\put(147,-7){$n$}
\end{picture}
\end{center}
We let $P=\text{span}_{\mathbb{Z}}\{\Lambda_0,\Lambda_1,\dots, \Lambda_n\}$ be the weight lattice, $\{\alpha_0,\alpha_1, \dots, \alpha_n\}$ be the set of simple roots, and $\{h_0,h_1,\dots, h_n\}$ be the set of simple coroots.
\subsection{Crystals}
In this section we give the basic definitions regarding crystals.  Our exposition follows \cite{HK}.  Let $I=\{0,1,2,\dots n\}$.  
\begin{definition}
A \emph{crystal} associated with $U_q(\mathfrak{g})$ is a set $B$ together with maps $\text{wt}:B\to P, \tilde{e}_i, \tilde{f}_i : B\to B\cup \{0\},$ and $\varepsilon_i, \varphi_i:B\to \mathbb{Z}\cup \{-\infty\}$, for $i\in I$ satisfying the following properties:
\begin{enumerate}
\item $\varphi_i(b)=\varepsilon_i(b)+\langle h_i, \text{wt}(b)\rangle$ for all $i \in I,$
\item $\text{wt}(\tilde{e}_ib)=\text{wt}(b)+\alpha_i$ if $\tilde{e}_ib\in B$,
\item $\text{wt}(\tilde{f}_ib)=\text{wt}(b)-\alpha_i$ if $\tilde{f}_ib\in B$,
\item $\varepsilon_i(\tilde{e}_ib)=\varepsilon_i(b)-1, \varphi_i(\tilde{e}_ib)=\varphi_i(b)+1$ if $\tilde{e}_ib\in B,$
\item $\varepsilon_i(\tilde{f}_ib)=\varepsilon_i(b)+1, \varphi_i(\tilde{f}_ib)=\varphi_i(b)-1$ if $\tilde{f}_ib\in B,$
\item $\tilde{f}_ib=b'$ if and only if $b=\tilde{e}_ib'$ for $b,b'\in B, i\in I,$
\item if $\varphi_i(b)=-\infty$ for $b\in B,$ then $\tilde{e}_ib=\tilde{f}_ib=0.$
\end{enumerate}
\end{definition}
A crystal $B$ can be regarded as a colored, oriented graph by defining
$$
b\stackrel{i}{\to} b' \iff \tilde{f}_ib=b'.
$$

\begin{definition}\label{tensor}The \emph{tensor product} $B_1\otimes B_2$ of crystals $B_1$ and $B_2$ is the set $B_1\times B_2$ together with the following maps:
\begin{enumerate}
\item $\text{wt}(b_1\otimes b_2)=\text{wt}(b_1)+\text{wt}(b_2),$
\item $\varepsilon_i(b_1\otimes b_2)=\max(\varepsilon_i(b_1),\varepsilon_i(b_2)-\langle h_i,\text{wt}(b_1)\rangle),$
\item $\varphi_i(b_1\otimes b_2)=\max(\varphi_i(b_2),\varphi_i(b_1)+\langle h_i,\text{wt}(b_2)\rangle),$
\item $\tilde{e}_i(b_1\otimes b_2)=\left \{ 
	\begin{array}{l}\tilde{e}_ib_1\otimes b_2, \text{ if } \varphi_i(b_1)\geq \varepsilon_i(b_2),\\
		b_1\otimes \tilde{e}_ib_2, \text{ if } \varphi_i(b_1)< \varepsilon_i(b_2),
	\end{array}\right .$
\item $\tilde{f}_i(b_1\otimes b_2)=\left \{ 
	\begin{array}{l}\tilde{f}_ib_1\otimes b_2, \text{ if } \varphi_i(b_1)> \varepsilon_i(b_2),\\
		b_1\otimes \tilde{f}_ib_2, \text{ if } \varphi_i(b_1)\leq \varepsilon_i(b_2),
	\end{array}\right .$
\end{enumerate}
where we write $b_1\otimes b_2$ for $(b_1,b_2)\in B_1\times B_2$, and understand $b_1\otimes 0=0\otimes b_2=0.$
\end{definition}
$B_1\otimes B_2$ is a crystal, as can easily be shown.

\begin{definition}
Let $B_1$ and $B_2$ be $U_q(\mathfrak{g})$-crystals.  A \emph{crystal isomorphism} is a bijective map $\Psi:B_1\cup \{0\}\to B_2\cup \{0\}$ such that
\begin{enumerate}
\item $\Psi(0)=0,$
\item if $b\in B_1$ and $\Psi(b)\in B_2,$ then $\text{wt}(\Psi(b))=\text{wt}(b), \varepsilon_i(\Psi(b))=\varepsilon_i(b),\varphi_i(\Psi(b))=\varphi_i(b)$ for all $i\in I,$
\item if $b,b'\in B_1, \Psi(b), \Psi(b')\in B_2$ and $\tilde{f}_ib=b',$ then $\tilde{f}_i\Psi(b)=\Psi(b')$ and $\Psi(b)=\tilde{e}_i\Psi(b')$ for all $i\in I.$
\end{enumerate}
\end{definition}

\subsection{$D_n^{(1)}$-crystal $B^{1,s}$}
Let $B^{1,s}:=\{b=(x_1,x_2,\dots, x_n,\bar{x}_n,\bar{x}_{n-1},\dots, \bar{x}_1)\in \mathbb{Z}_{\geq 0}^{2n}| s(b):=\sum_{i=1}^n x_i+\sum_{i=1}^{n}\bar{x}_i=s, x_n=0 \text{ or } \bar{x}_n=0\}$ and define
\begin{eqnarray*}
\tilde{e}_0 b &=&\left \{ \begin{array}{c} (x_1, x_2-1, \dots, \bar{x}_2, \bar{x}_1+1 ) \text{ if }x_2>\bar{x}_2,\\
	(x_1-1, x_2, \dots, \bar{x}_2+1, \bar{x}_1 ) \text{ if }x_2\leq \bar{x}_2, \end{array} \right .\\
\tilde{e}_{n} b &=&\left \{ \begin{array}{c}(x_1, \dots, x_{n}+1, \bar{x}_n, \bar{x}_{n-1}-1, \dots, \bar{x}_1) \text{ if }x_n\geq0, \bar{x}_n=0,\\ 
	(x_1, \dots, x_{n-1}+1, x_n, \bar{x}_{n}-1, \dots, \bar{x}_1) \text{ if }x_n=0, \bar{x}_n>0,\end{array}\right .\\
\tilde{e}_{i} b &=&\left \{\begin{array}{c}(x_1, \dots, x_{i}+1, x_{i+1} -1, \dots, \bar{x}_1) \text{ if }x_{i+1}>\bar{x}_{i+1},\\ 
	(x_1, \dots,\bar{x}_{i+1}+1, \bar{x}_{i}-1, \dots, \bar{x}_1) \text{ if }x_{i+1}\leq\bar{x}_{i+1}.\end{array}\right .1\leq i \leq n-1\\
\end{eqnarray*}
\begin{eqnarray*}
\tilde{f}_0 b &=& \left \{ \begin{array}{c}(x_1, x_2+1, \dots, \bar{x}_2, \bar{x}_1-1 ) \text{ if }x_2\geq \bar{x}_2,\\
	(x_1+1, x_2, \dots, \bar{x}_2-1, \bar{x}_1 ) \text{ if }x_2 < \bar{x}_2,\end{array} 
	\right .\\
\tilde{f}_{n} b &=& \left \{\begin{array}{c}(x_1, \dots, x_{n}-1, \bar{x}_n, \bar{x}_{n-1}+1, \dots, \bar{x}_1) \text{ if }x_n>0, \bar{x}_n=0,\\ 
	(x_1, \dots, x_{n-1}-1, x_n, \bar{x}_{n}+1, \dots, \bar{x}_1) \text{ if }x_n=0, \bar{x}_n\geq0, \end{array} \right .\\
\tilde{f}_{i} b &=&\left \{ \begin{array}{c}(x_1, \dots, x_{i}-1, x_{i+1} +1, \dots, \bar{x}_1) \text{ if }x_{i+1}\geq\bar{x}_{i+1},\\ 
	(x_1, \dots,\bar{x}_{i+1}-1, \bar{x}_{i}+1, \dots, \bar{x}_1) \text{ if }x_{i+1}<\bar{x}_{i+1}. \end{array} \right .1\leq i \leq n-1
\end{eqnarray*}
If $x_i<0$ or $\bar{x}_i<0$ in $b'=\tilde{e}_i(b)$ or $\tilde{f}_i(b)$ then $b'$ is understood to be $0$.
\begin{eqnarray*}
\text{wt}(b)&=&(\bar{x}_1-x_1+\bar{x}_2-x_2)\Lambda_0+\sum_{i=1}^{n-2}(x_i-\bar{x}_i+\bar{x}_{i+1}-x_{i+1})\Lambda_i\\
            &&+\:(x_{n-1}-\bar{x}_{n-1}+\bar{x}_n-x_n)\Lambda_{n-1}\\
	&&+\:(x_{n-1}-\bar{x}_{n-1}+x_n-\bar{x}_n)\Lambda_{n},\\
\varphi_0(b) &=& \bar{x}_1+(\bar{x}_2-x_2)_+, \qquad \varepsilon_0(b)=x_1+(x_2-\bar{x}_2)_+,\\ 
\varphi_i(b) &=& x_i+(\bar{x}_{i+1}-x_{i+1})_+ \text{ for }1\leq i\leq  n-2,\\ 
\varepsilon_i(b)&=&\bar{x}_i+(x_{i+1}-\bar{x}_{i+1})_+ \text{ for }1\leq i\leq  n-2,\\
\varphi_{n-1}(b) &=& x_{n-1}+\bar{x}_n, \qquad \varepsilon_{n-1}(b)=\bar{x}_{n-1}+x_n,\\ 
\varphi_{n}(b) &=& x_{n-1}+x_n, \qquad \varepsilon_{n}(b)=\bar{x}_{n-1}+\bar{x}_n,
\end{eqnarray*}
where $(n)_+:=\max(n,0)$.
Then we have the following:
\begin{theorem}[\cite{KKM},\cite{KMN}]
The maps $\tilde{e}_i, \tilde{f}_i, \varepsilon_i,\varphi_i,\text{wt}$ define a $U_q'(D_n^{(1)})$-crystal structure on $B^{1,s}$.
\end{theorem}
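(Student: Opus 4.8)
The statement asks us to verify that the explicit data $(\tilde e_i,\tilde f_i,\varepsilon_i,\varphi_i,\mathrm{wt})$ satisfy the seven conditions of Definition~1, so my plan is a systematic, node-by-node check of those conditions. Condition~(7) is vacuous here, since each $\varphi_i$ takes values in $\mathbb Z$ and is never $-\infty$. Before the numbered axioms I would first confirm well-definedness, i.e.\ that each $\tilde e_i b$ and $\tilde f_i b$ lies in $B^{1,s}\cup\{0\}$. Every operator moves a single unit between two coordinates, so the total $s(b)=\sum_i x_i+\sum_i\bar x_i$ is automatically preserved; one then checks that the spin constraint ``$x_n=0$ or $\bar x_n=0$'' survives each application, which only concerns the operators touching $x_n,\bar x_n$, namely $\tilde e_{n-1},\tilde f_{n-1},\tilde e_n,\tilde f_n$, and the convention that a negative coordinate sends the output to $0$ disposes of the remaining boundary cases. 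Throughout I would use $\langle h_i,\Lambda_j\rangle=\delta_{ij}$, so that $\langle h_i,\mathrm{wt}(b)\rangle$ is simply the coefficient of $\Lambda_i$ in $\mathrm{wt}(b)$.

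For axiom~(1) I would note that it reduces to the elementary identity $(a)_+-(-a)_+=a$: for $1\le i\le n-2$ one gets $\varphi_i(b)-\varepsilon_i(b)=x_i-\bar x_i+(\bar x_{i+1}-x_{i+1})_+-(x_{i+1}-\bar x_{i+1})_+=x_i-\bar x_i+\bar x_{i+1}-x_{i+1}$, which is exactly the $\Lambda_i$-coefficient of $\mathrm{wt}(b)$, and the remaining nodes $i=0,n-1,n$ are handled identically from their own formulas. Axioms~(2) and~(3) I would verify by reading off the coordinate increments of each branch of each operator and comparing the induced change in $\mathrm{wt}$ with $\pm\alpha_i$ written in the $\{\Lambda_j\}$ basis through the $D_n^{(1)}$ Cartan matrix. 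For example, for $2\le i\le n-2$ the first branch of $\tilde f_i$ lowers $x_i$ and raises $x_{i+1}$ by one, giving the weight change $\Lambda_{i-1}-2\Lambda_i+\Lambda_{i+1}=-\alpha_i$; the first branch of $\tilde f_0$ raises $x_2$ and lowers $\bar x_1$, giving $-2\Lambda_0+\Lambda_2=-\alpha_0$; and the remaining branches, the fork tips $1,n-1$ (where the extra $\Lambda$-term cancels), and node $n$ are checked the same way.

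Axioms~(4) and~(5) I would obtain by recomputing $\varepsilon_i,\varphi_i$ on $\tilde e_i b$ and $\tilde f_i b$ in each branch; granted~(1), it is enough to track one of the pair, and the same $(\cdot)_+$ identities deliver the required $\mp 1$ shifts. The one bookkeeping-heavy axiom is~(6), the mutual-inverse relation $\tilde f_i b=b'\iff b=\tilde e_i b'$, for which I would match the branch condition defining $\tilde f_i$ against that defining $\tilde e_i$ and check that the two partition the coordinate configurations consistently. The delicate points are precisely the tie cases where the defining inequalities pass between strict and non-strict --- the transition between $x_2>\bar x_2$ and $x_2\le\bar x_2$ for node $0$, between $x_{i+1}\ge\bar x_{i+1}$ and $x_{i+1}<\bar x_{i+1}$ for the middle nodes, and the $x_n,\bar x_n$ interface for node $n$ --- where one must confirm that $\tilde e_i$ inverts $\tilde f_i$ exactly on the locus on which the output remains inside $B^{1,s}$.

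I expect the main obstacle to be the two forked ends of the diagram, the affine node $0$ and the spin node $n$, together with the constraint $x_n=0$ or $\bar x_n=0$. Unlike a middle node, these operators do not merely shift one adjacent pair of coordinates but couple $x_1,x_2,\bar x_1,\bar x_2$ (node $0$) and $x_{n-1},x_n,\bar x_n,\bar x_{n-1}$ (node $n$), so the case analysis branches further and preservation of the constraint must be argued separately in each subcase. A useful economy would be to invoke the known finite-type $D_n$ crystal structure for the classical operators $i=1,\dots,n$, reducing the genuinely new work to the affine node $0$, and to use the Dynkin-diagram involutions of $D_n^{(1)}$ (exchanging $0\leftrightarrow1$ and $n-1\leftrightarrow n$), realized as explicit involutions of the coordinates, to transport the check at a fork to its neighbouring node.
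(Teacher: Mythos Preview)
The paper does not prove this theorem at all: it is stated as a known result and attributed to \cite{KKM} and \cite{KMN}, with no argument given in the text. So there is nothing to compare your proposal against in terms of technique.

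Your plan for a direct, axiom-by-axiom verification is sound and would certainly succeed. The identity $(a)_+-(-a)_+=a$ is indeed the content of axiom~(1) for the middle nodes and node~$0$; your weight computations for axioms~(2),(3) are correct (for instance, your check that the first branch of $\tilde f_0$ gives weight change $-2\Lambda_0+\Lambda_2=-\alpha_0$ is right, and the $\Lambda_1$-coefficient does not move because the $+1$ from $\bar x_1\to\bar x_1-1$ cancels the $-1$ from $x_2\to x_2+1$); and your observation that after axiom~(1) only one of the $\pm1$ shifts in~(4),(5) needs to be tracked is also valid, via the identity $\varphi_i-\varepsilon_i=\langle h_i,\mathrm{wt}\rangle$ combined with~(2),(3). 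Your identification of the ``tie'' cases in axiom~(6) and of the spin constraint at nodes $n-1,n$ as the only places requiring care is accurate. The symmetry shortcut you propose at the end---using the Dynkin automorphism $0\leftrightarrow1$ to reduce node~$0$ to node~$1$---does work here, since on coordinates it is realized by swapping $x_1\leftrightarrow\bar x_1$ and $x_2\leftrightarrow\bar x_2$, which visibly interchanges the formulas for $(\tilde e_0,\tilde f_0,\varepsilon_0,\varphi_0)$ with those for $(\tilde e_1,\tilde f_1,\varepsilon_1,\varphi_1)$.

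In short, your proposal is correct and self-contained, whereas the paper simply invokes the literature; if you intend to include a proof, what you have outlined is exactly the routine verification one would expect.
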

We associate the element $(x_1,x_2,\cdots, \overline{x}_1)\in B^{1,s}$ with the tableau: 
$$\begin{array}{l @{} l @{} l @{} l @{} l @{} l}\underbrace{\begin{array}{|c|c|c|c|}\hline 1 & 1 & \cdots & 1 \\ \hline\end{array}}_{x_1}& 
\underbrace{\begin{array}{c|c|c|c|}\hline  2 & 2 & \cdots & 2\\ \hline \end{array}}_{x_2}&\begin{array}{|c|}\hline \cdots\\ \hline \end{array}&\underbrace{\begin{array}{c|c|c|c|}\hline  \overline{1} & \overline{1} & \cdots & \overline{1}\\ \hline \end{array}}_{\overline{x}_1}\end{array}.$$

\subsection{Perfect crystal $B^{2,s}$ for $D_n^{(1)}$}
In this section we review the perfect crystals $B^{2,s},s\geq 1$ corresponding to the 2-node of the Dynkin diagram of $D_n^{(1)}$.  The existence of crystal bases $B^{2,s},s\geq 1$ was proven in \cite{OS} and the combinatorial realization was given in \cite{FOS1} (see also \cite{SS}, \cite{KN}).

For $D_n^{(1)}$, define the alphabet $\mathcal{B}=\{1,2,\dots, n-1,n,\overline{n},\overline{n-1},\dots, \overline{2},\overline{1}\}$.  Define the following partial ordering on $\mathcal{B}$:

$$
1 < 2 < \cdots < n-1 <\begin{array}{c }n \\
						\overline{n}  \end{array}
< \overline{n-1} < \cdots < \overline{2} < \overline{1}.$$
For $D_n$, define the set:
$$
B(k\Lambda_2)=\left \{ \begin{array}{l|l}
\begin{array}{l}T=\begin{array}{|c|c|c|c|}
\hline
T_{1,1}&T_{1,2}&\cdots&T_{1,k}\\
\hline
T_{2,1}&T_{2,2}&\cdots&T_{2,k}\\
\hline
\end{array}\\
T_{i,j}\in \mathcal{B}
\end{array}
&
\begin{array}{l}
T_{i,j}\leq T_{i,j+1}, \\
\qquad i=1,2, j=1,2,\dots k-1\\
T_{1,j}<T_{2,j} \text{ or }T_{1,j}=\overline{n}, T_{2,j}=n,\\
\qquad  j=1,2,\dots k,\\
\text{no column of the form }\begin{array}{|c|}
\hline 1\\
\hline \overline{1}\\
\hline
\end{array} \text{ occurs,}\\
\text{no configuration of the form }\\
\qquad \begin{array}{|c|c|} \hline a&a\\
\hline * & \overline{a}\\
\hline
\end{array}
\text{ or }
\begin{array}{|c|c|} \hline a&*\\
\hline \overline{a}& \overline{a}\\
\hline
\end{array}
\text{ occurs, and}\\
\text{no configuration of the form }\\
\qquad \begin{array}{|c|c|} \hline n-1&n\\
\hline n& \overline{n-1}\\
\hline
\end{array}
\text{ or }
\begin{array}{|c|c|} \hline n-1&\overline{n}\\
\hline \overline{n}& \overline{n-1}\\
\hline
\end{array}\\
\text{occurs.}
\end{array}
\end{array}
\right \}
$$
The set $B(k\Lambda_2)$ becomes a $U_q'(D_n)$-crystal by considering it as a tensor product (see Definition \ref{tensor}) under the following reading:
$$
T=\begin{array}{|c|c|c|c|}
\hline
T_{1,1}&T_{1,2}&\cdots&T_{1,k}\\
\hline
T_{2,1}&T_{2,2}&\cdots&T_{2,k}\\
\hline
\end{array}
$$
corresponds to
$$
T_{1,k}\otimes T_{2,k}\otimes T_{1,k-1}\otimes T_{2,k-1}\otimes \cdots \otimes T_{1,1}\otimes T_{2,1}.
$$
The maps $\tilde{e}_i,\tilde{f}_i,\varepsilon_i,\varphi_i, i\in I$ and $\overline{\text{wt}}$ are given for an individual letter $b\in \mathcal{B}$ by:
\begin{eqnarray}
\tilde{e}_i(b)&=&\left \{ \begin{array}{ll}
i, &\text{if }b=i+1,\\
\overline{i+1}, &\text{if }b=\overline{i},\\
0, & \text{otherwise}.
\end{array}\right .i=1,2,\dots, n-1,\\
\tilde{e}_n(b)&=&\left \{ \begin{array}{ll}
n, &\text{if }b=\overline{n-1},\\
n-1, &\text{if }b=\overline{n},\\
0, & \text{otherwise}.
\end{array}\right .\\
\tilde{f}_i(b)&=&\left \{ \begin{array}{ll}
i+1, &\text{if }b=i,\\
\overline{i}, &\text{if }b=\overline{i+1},\\
0, & \text{otherwise}.
\end{array}\right .i=1,2,\dots, n-1,\\
\tilde{f}_n(b)&=&\left \{ \begin{array}{ll}
\overline{n-1}, &\text{if }b=n,\\
\overline{n}, &\text{if }b=n-1,\\
0, & \text{otherwise}.
\end{array}\right .\\
\varepsilon_i(b)&=&\left \{ \begin{array}{ll}
1, &\text{if }b=i+1,\overline{i},\\
0, & \text{otherwise}.
\end{array}\right .i=1,2,\dots, n-1,\\
\varepsilon_n(b)&=&\left \{ \begin{array}{ll}
1, &\text{if }b=\overline{n-1},\overline{n},\\
0, & \text{otherwise}.
\end{array}\right .\\
\varphi_i(b)&=&\left \{ \begin{array}{ll}
1, &\text{if }b=i,\overline{i+1},\\
0, & \text{otherwise}.
\end{array}\right .i=1,2,\dots, n-1,\\
\varphi_n(b)&=&\left \{ \begin{array}{ll}
1, &\text{if }b=n-1,n,\\
0, & \text{otherwise}.
\end{array}\right .\\
\overline{\text{wt}}(b)&=&\left \{ \begin{array}{ll}
\Lambda_{1}, & \text{if }b=1,\\
\Lambda_{i}-\Lambda_{i-1}, & \text{if }b=2,3,\dots, n-2,n,\\
\Lambda_{n-1}+\lambda_{n}-\Lambda_{n-2}, &\text{if }b=n-1, 
\end{array}\right .\\
\overline{\text{wt}}(\overline{b})&=&-\overline{\text{wt}}(b), \text{ in any other case}.\nonumber
\end{eqnarray}

We then have the following:

\begin{theorem}[\cite{OS},\cite{FOS1}]
$B^{2,s}, s\geq 0$ is a $D_n^{(1)}$-crystal, and, forgetting the 0-arrows, is isomorphic to the $D_n$-crystal $\bigoplus_{k=0}^{s}B(k\Lambda_2)$.
\end{theorem}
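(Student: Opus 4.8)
The plan is to install the structure in two stages: first the finite part carried by the nodes $i=1,\dots,n$, which yields the $D_n$-crystal isomorphism, and then the affine node $i=0$.

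Since the tensor product of crystals is again a crystal (Definition \ref{tensor}) and the one-letter set $\mathcal{B}$ with the single-letter maps above is directly checked to satisfy the crystal axioms (it is the crystal of the vector representation of $D_n$), the column reading
$$T \longmapsto T_{1,k}\otimes T_{2,k}\otimes \cdots \otimes T_{1,1}\otimes T_{2,1}$$
realizes $B(k\Lambda_2)$ as a subset of $\mathcal{B}^{\otimes 2k}$ and pulls back the operators $\tilde e_i,\tilde f_i,\varepsilon_i,\varphi_i$ and $\mathrm{wt}$. On the ambient tensor power these already obey the crystal axioms, so for $i=1,\dots,n$ the only remaining point is to control their interaction with the conditions defining $B(k\Lambda_2)$.

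The key step, which I expect to be the main obstacle, is to show that $B(k\Lambda_2)$ is stable under $\tilde e_i$ and $\tilde f_i$ for $1\le i\le n$. I would invoke the signature (bracketing) rule obtained by iterating the tensor product rule of Definition \ref{tensor}: form the $i$-signature of the reading word, cancel adjacent $+\,-$ pairs, and act on the unique surviving extremal letter, so that $\tilde e_i$ or $\tilde f_i$ changes exactly one entry of $T$ by a single step in the order on $\mathcal{B}$. One then checks, condition by condition --- weak increase along each row, the column condition $T_{1,j}<T_{2,j}$ (or $T_{1,j}=\overline n,\ T_{2,j}=n$), the exclusion of a column with entries $1$ over $\overline 1$, and the four forbidden two-column configurations --- that a single admissible box change can never create a violation, the operator producing $0$ exactly when it otherwise would. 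This is a finite but delicate case analysis, since the forbidden-configuration conditions couple neighbouring columns while the bracketing that selects the acted-upon box is non-local.

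With closure in hand I would identify each $B(k\Lambda_2)$ with the irreducible $D_n$-crystal of highest weight $k\Lambda_2$: the tableau whose first row is $1\cdots1$ and second row is $2\cdots2$ has weight $k\Lambda_2$ and is annihilated by every $\tilde e_i$ with $i\ge 1$, and connectedness under the lowering operators, together with a character comparison, pins down the component. To assemble the decomposition I would locate all $D_n$-highest weight elements of $B^{2,s}$ in the realization of \cite{FOS1}, verify that their weights are precisely $k\Lambda_2$ for $0\le k\le s$, each of multiplicity one, and conclude from the previous step that the connected components are exactly $B(k\Lambda_2)$, $0\le k\le s$; hence $B^{2,s}\cong\bigoplus_{k=0}^s B(k\Lambda_2)$ as $D_n$-crystals.

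Finally, for the affine node I would introduce the involution $\sigma$ of $B^{2,s}$ realizing the Dynkin diagram automorphism of $D_n^{(1)}$ that exchanges the nodes $0$ and $1$ and fixes $2,\dots,n$, and define $\tilde e_0=\sigma\,\tilde e_1\,\sigma$ and $\tilde f_0=\sigma\,\tilde f_1\,\sigma$, with $\varepsilon_0,\varphi_0$ and $\mathrm{wt}$ adjusted accordingly. Once $\sigma$ is shown to be a well-defined bijection intertwining $\tilde e_i$ with $\tilde e_{\sigma(i)}$, every crystal axiom involving the node $0$ reduces to the already-verified axiom for the node $1$, and $B^{2,s}$ is a $D_n^{(1)}$-crystal. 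This establishes both assertions.
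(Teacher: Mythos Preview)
The paper does not supply its own proof of this theorem: it is stated with the citations \cite{OS} and \cite{FOS1} and no argument is given in the text. So there is nothing against which to compare your proposal line by line.

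That said, your sketch is broadly the standard route taken in the cited literature. The closure of the tableau model $B(k\Lambda_2)$ under $\tilde e_i,\tilde f_i$ for $i\ge 1$ and its identification with the irreducible $D_n$-crystal of highest weight $k\Lambda_2$ is the content of Kashiwara--Nakashima \cite{KN}; the decomposition of $B^{2,s}$ into $\bigoplus_{k=0}^s B(k\Lambda_2)$ and the construction of the $0$-action via the Dynkin involution $\sigma$ is precisely what \cite{SS} and \cite{FOS1} do (and what the paper summarizes immediately after the theorem). One caution: the hardest part is not the case analysis for closure, which \cite{KN} handles, but rather showing that $\sigma$ is a well-defined bijection on $B^{2,s}$ intertwining the classical operators as required. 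In \cite{FOS1} this is done not by exhibiting $\sigma$ directly but by characterizing $B^{r,s}$ uniquely via its classical decomposition together with certain ``promotion'' properties, and then checking these on the combinatorial model; the explicit description of $\sigma$ given in the paper (via $\iota_j^k$ and $*\mathcal{BC}$) is a nontrivial consequence rather than the starting point. Your final paragraph understates this difficulty.
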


The 0-action in $B^{2,s}$ was described in \cite{SS} and later in \cite{FOS1} for arbitrary $B^{r,s}$.  It is given in terms of the $D_n^{(1)}$-crystal automorphism $\sigma$, induced by the symmetry between the 0 and 1 nodes in the Dynkin diagram.  We give a method for computing $\sigma$, and use it to give expressions for $\tilde{e}_0,\tilde{f}_0,\varepsilon_0,\varphi_0$.

Consider the restriction of $B(k\Lambda_2), 0\leq k \leq s$ from $D_n$ to $D_{n-1}$ by deleting the 1 arrows.   
The $D_{n-1}$ branching components are the irreducible components of the restricted crystal.  These may be partially ordered by setting $B<C$ if there exists a 1-arrow connecting some element of $B$ to some element of $C$ in $B(k\Lambda_2)$.  There is also a rank function relative to $s$ and $k$ associated with each branching component $B$  given by setting $\text{rk}(B)=\#\{C|B<C \text{ in some path from }B \text{ to the highest branching component in }B(k\Lambda_2)\}+s-k+1$.  We define the function $\iota_j^k:B(j\Lambda_2)\cup \{0\}\to B(k\Lambda_2)\cup \{0\},0\leq j,k\leq s$ by defining $\iota_j^k(T)$ to be the element corresponding to $T$ in the corresponding branching component of $B(k\Lambda_2)$ whose rank is equal to that of $B$, if it exists, and 0 if no corresponding branching component exists, and $\iota_k^j(0)=0$.  Then, $\iota_k^{k+1}(T),0\leq k\leq s-1$ may be given explicitly.  Let $T'$ be the $2\times k+1$ tableau defined by the following: if $k\neq 0$, then
\begin{eqnarray*}
T'_{1,k+1} &=&\left \{  \begin{array}{ll}2, &\text{if }T_{1,k}= 1\\
						T_{1,k},&\text{otherwise}\end{array}\right .\\
T'_{2,k+1} &=&\left \{  \begin{array}{ll}\overline{2},& \text{if }T_{1,k}= 1\\
						\overline{1}, &\text{otherwise}\end{array}\right .  
\end{eqnarray*}
and, for $2\leq i \leq k,$
\begin{eqnarray*}
T'_{1,i} &=&\left \{  \begin{array}{ll}T_{1,i-1},& \text{if }T_{2,i}\neq \overline{T_{1,i-1}}\\
						\overline{n}, &\text{if }
			T_{2,i-1}=n,T_{2,i}=\overline{T_{1,i-1}}=\overline{n-1}\\
			T_{1,i-1}+1,&\text{ otherwise}\end{array}\right .\\
T'_{2,i} &=&\left \{  \begin{array}{ll}T_{2,i},& \text{if }T_{2,i}\neq \overline{T_{1,i-1}}\\
						n,& \text{if }T_{2,i-1}=n,T_{2,i}=\overline{T_{1,i-1}}=\overline{n-1}\\
			\overline{T_{2,i}+1},&\text{ otherwise}\end{array}\right .  
\end{eqnarray*}
and,
\begin{eqnarray*}
T'_{1,1} &=&\left \{  \begin{array}{ll}2,& \text{if }T_{2,1}= \overline{1}\\
						1, &\text{otherwise}\end{array}\right .\\
T'_{2,1} &=&\left \{  \begin{array}{ll}\overline{2},& \text{if }T_{2,1}=\overline{1}\\
						T_{2,1}, &\text{otherwise}\end{array}\right .  
\end{eqnarray*}
finally,
\begin{equation*}
T'=\begin{array}{|c|}\hline 2\\ \hline \overline{2} \\ \hline\end{array}\:, \text{if }k=0.
\end{equation*}
Then $\iota_k^{k+1}(T)=T'$.  Conversely, let $T'$ be the inverse image of $T\in B((k+1)\Lambda_2)$, if it exists.  If so, and if $T'\in B(k\Lambda_2),$ then $\iota_{k+1}^{k}(T)=T'$, otherwise $\iota_{k+1}^{k}(T)=0.$  Finally, $$\iota_{j}^k(T)=\left \{ \begin{array}{ll}
	\iota_{k-1}^k\circ \cdots\circ \iota_{j+1}^{j+2} \circ \iota_{j}^{j+1}(T),&\text{if }1\leq j<k\leq s\\
	\iota_{k+1}^k\circ \cdots\circ \iota_{j-1}^{j-2} \circ \iota_{j}^{j-1}(T),&\text{if }1\leq k<j\leq s\\
	T, &\text{if }1\leq j=k\leq s.
	\end{array}
\right .$$

The partially ordered set of branching components of $B(k\Lambda_2)$ also has a symmetry with respect to the rank function relative to $s$, called $*\mathcal{BC}$-duality.  If $B$ is a branching component of $B(k\Lambda_2)$ with $\text{rk}(B)=r,$ then there is an isomorphic branching component $B^{*\mathcal{BC}}$ with $\text{rk}(B^{*\mathcal{BC}})=2s-r$ which is the image as a set of $B$ under the Lusztig automorphism \cite{Lu2}. For $T\in B\subset B(k\Lambda_2),$ we define $T^{*\mathcal{BC}}$ to be the element corresponding to $T$ in $B^{*\mathcal{BC}}$.  Then $*\mathcal{BC}$ commutes with $\iota_k^j$ for all $j$ for which $\iota_k^j(T)\neq 0$.  Using this fact, we may compute $T^{*\mathcal{BC}}$ explicitly as follows.  Let $l=\min\{j|\iota_k^j(T)\neq 0\}$.  We define a map $\psi$ in $B(l\Lambda_2)$ as follows.  For $T\in B(l\Lambda)$, let $a=\#\{i|T_{1,i}=1\}, b=\#\{i|T_{2,i}=\overline{1}\}.$  Iteratively compute the following: for $1\leq i \leq a-1$ let $T_{1,i}'=T_{1,i}, T_{2,i}'=T_{2,i}$, and let $T_{2,a}'=T_{2,a}.$  Starting from $i=a$ until we reach $i=m$ such that $T_{1,m+1}\geq T_{2,m}'$ and $(T_{1,m+1}, T_{2,m}')\neq (n,\overline{n}),(\overline{n},n)$, and $(T_{1,m+1},T_{2,m+1},T_{2,m}')\neq (x,\overline{x},y)$ with $x\leq y\leq \overline{x},$ if such an index, $m$, exists, or $l-b$ otherwise:
\begin{eqnarray*}
T_{1,i}'&=&\left \{\begin{array}{ll}
	T_{1,i+1}, &\text{if }T_{2,i+1}\neq \overline{T_{1,i+1}}\\
	n-1,& \text{if }T_{2,i}'=n,T_{1,i+1}=\overline{T_{2,i+1}}=\overline{n}\\
	T_{1,i+1}-1,&\text{otherwise},
\end{array} \right .\\
T_{2,i+1}'&=&\left \{\begin{array}{ll}
	T_{2,i+1}, &\text{if }T_{2,i+1}\neq \overline{T_{1,i+1}}\\
	\overline{n-1},& \text{if }T_{2,i}'=n,T_{1,i+1}=\overline{T_{2,i+1}}=\overline{n}\\
	\overline{T_{2,i+1}-1},&\text{otherwise},
\end{array} \right \}, i\neq m
\end{eqnarray*}
and let
\begin{equation*}
x=\left \{\begin{array}{ll}
	T_{2,m}, &\text{if }T_{2,m}\neq \overline{T_{1,m}}\\
	\overline{n-1},& \text{if }T_{2,m-1}=n,T_{1,m}=\overline{T_{2,m}}=\overline{n}\\
	\overline{T_{2,m}-1},&\text{otherwise}.
\end{array} \right .
\end{equation*}
Then, let
\begin{eqnarray*}
T_{1,m}'&=&\left \{\begin{array}{ll}x, &\text{if }x\neq \overline{T_{2,m+1}}, \text{ or } m=l\\
			\overline{n},& \text{if }T_{1,m+1}=\overline{n},T_{2,m+1}=\overline{x}=\overline{n-1}\\
			x+1, &\text{otherwise},
\end{array}\right .
\end{eqnarray*}
and,
\begin{eqnarray*}
T_{2,m}'&=&\left \{\begin{array}{ll}T_{2,m+1}, &\text{if }x\neq \overline{T_{2,m+1}}\\
			n,& \text{if }T_{1,m+1}=\overline{n},T_{2,m+1}=\overline{x}=\overline{n-1}\\
			\overline{T_{2,m+1}+1}, &\text{otherwise},
\end{array}\right \}, m\neq l
\end{eqnarray*}
and, for $m+1\leq i\leq l-b-1:$
\begin{eqnarray*}
T_{1,i}'&=&\left \{\begin{array}{ll}
	T_{1,i}, &\text{if }T_{1,i}\neq \overline{T_{2,i+1}}\\
	\overline{n},& \text{if }T_{1,i+1}=\overline{n},T_{2,i+1}=\overline{T_{1,i}}=\overline{n-1}\\
	T_{1,i}+1,&\text{otherwise},
\end{array} \right .\\
T_{2,i}'&=&\left \{\begin{array}{ll}
	T_{2,i+1}, &\text{if }T_{1,i}\neq \overline{T_{2,i+1}}\\
	n,& \text{if }T_{1,i+1}=\overline{n},T_{2,i+1}=\overline{T_{1,i}'}=\overline{n-1}\\
	\overline{T_{2,i+1}+1},&\text{otherwise},
\end{array} \right .
\end{eqnarray*}
Finally, let $T'_{2,l-b}=\overline{1},$ and $T'_{i,j}=T_{i,j}$ in all other cases.  Defining $\psi(T)=T'$, we have: $T^{*\mathcal{BC}}=\iota_{l}^k\psi^{a-b}(\iota_{k}^l (T))$.

Finally, for $T\in B(k\Lambda_2)\subset B^{2,s}$ we have the following:
\begin{eqnarray*}
\sigma(T)&=&\iota_{k}^{s+l-k}(T^{*\mathcal{BC}})\\
\tilde{e}_0T&=&\sigma(\tilde{e}_1\sigma(T))\\
\tilde{f}_0T&=&\sigma(\tilde{f}_1\sigma(T))\\
\varepsilon_0(T)&=&\varepsilon_1(\sigma(T))\\
\varphi_0(T)&=&\varphi_1(\sigma(T)),
\end{eqnarray*}
where $l=\min\{j|\iota_{k}^j(T^{*\mathcal{BC}})\neq 0\}.$

\emph{Example:} Let $T=\begin{array}{|c|c|}\hline 1&2\\\hline 2&\overline{2}\\ \hline\end{array}$ be an element of the $D_4^{(1)}$-crystal $B^{2,2}.$  $T^{*\mathcal{BC}}=$.  We compute $\tilde{e}_0(T)$.  The minimum $l\in \{ 0,1,2\}$ such that $\iota_2^l(T)\neq 0$ is $1$, with $\iota_{2}^1(T)=\begin{array}{|c|}\hline 1\\\hline 2\\ \hline\end{array}\:$.  Then $\psi\left (\:\begin{array}{|c|}\hline 1\\\hline 2\\ \hline\end{array}\: \right )=\begin{array}{|c|}\hline 2\\\hline \overline{1}\\ \hline\end{array}\:$, and we have $\sigma(T)=\iota_1^2T^{*\mathcal{BC}}=\iota_2^1\circ\iota_1^2\left (\:\begin{array}{|c|}\hline 2\\\hline \overline{1}\\ \hline\end{array}\:\right )=\begin{array}{|c|}\hline 2\\\hline \overline{1}\\ \hline\end{array}\:$.  Then $\tilde{e}_1(\sigma(T))=\begin{array}{|c|}\hline 2\\\hline \overline{2}\\ \hline\end{array}\:.$  On this tableau, we have $(\tilde{e}_1(\sigma(T)))^{*\mathcal{BC}}=\begin{array}{|c|}\hline 2\\\hline \overline{2}\\ \hline\end{array}\:,$ and the minimum $l'\in \{0,1,2\}$ such that $\iota_1^{l'}((\tilde{e}_1(\sigma(T)))^{*\mathcal{BC}})\neq 0$ is $0$, so we have $\tilde{e}_0(T)=\sigma(\tilde{e}_1\sigma(T))=\begin{array}{|c|}\hline 2\\\hline \overline{2}\\ \hline\end{array}\:.$
\subsection{Lecouvey's insertion algorithm for $D_n$-crystals}
Let $b_1 \otimes b_2 \otimes b_3 \otimes \cdots\otimes b_l$ be a highest weight element of the $D_n$-crystal $B(\Lambda_1)^{\otimes l}$ and $B(b_1\; b_2 \; \dots \; b_l)$ be the $D_n$-crystal generated by $b_1 \otimes b_2 \otimes b_3 \otimes \cdots \otimes b_l$.  We review the ``column insertion'' algorithm in \cite{L}.  Let $\xi:B(1 \; 2\; 1)\to B(1 \; 1\; 2),$ be the unique isomorphism between the given crystals.  Explicitly:
\begin{equation*}
\xi(x\otimes y \otimes z)=\left \{\begin{array}{ll}
x\otimes z\otimes y, &\text{if }z\leq x <y, y\neq \overline{z},\\
y\otimes x\otimes z, &\text{if }x< z \leq y, y\neq \overline{x},\\
x\otimes(z+1)\otimes \overline{z+1}, &\text{if }y=\overline{z}, z<n-1,z< x < \overline{z},\\
\overline{x-1}\otimes (x-1)\otimes z, &\text{if }y=\overline{x}, 1<x<n, x\leq z \leq \overline{x},\\
y\otimes x\otimes z, &\text{if }y\geq \overline{n-1}, (x,z)=(\overline{n},n)\text{ or }(n,\overline{n})\\ 
x\otimes z\otimes y, &\text{if }z\leq n-1, (x,y)=(\overline{n},n)\text{ or }(n,\overline{n})\\ 
\overline{n-1}\otimes (n-1)\otimes z,& \text{if }(x,y,z)=(n,\overline{n},\overline{n})\text{ or }(\overline{n},n,n)\\
\overline{n}\otimes \overline{n}\otimes n,& \text{if }(x,y,z)=(\overline{n},\overline{n-1},n-1),\\
n\otimes n\otimes \overline{n},& \text{if }(x,y,z)=(n,\overline{n-1},n-1).\\
\end{array}
\right .
\end{equation*}
Let 
$$\begin{array}{l}
\xi_i:B(1)^{\otimes i-1}\otimes B(1\; 2 \; 1)\otimes B(1)^{\otimes l-i-2}\to B(1)^{\otimes i-1}\otimes B(1\; 1 \; 2)\otimes B(1)^{\otimes l-i-2}\\
\end{array}$$
 be the following crystal isomorphism:
$$\begin{array}{l}
\xi_i=1^{\otimes i-1}\otimes \xi \otimes 1^{\otimes l-i-2}.
\end{array}$$
We use these maps to define the insertion of a letter $b\in \mathcal{B}$ into a column $T$, notated $b\to T$, of a Kashiwara-Nakashima tableau (\cite{KN}).  Let $T=T_{1,1}\otimes T_{2,1}\otimes \cdots \otimes T_{k,1},$ where $T_{i,1}\in \mathcal{B}, 1\leq i \leq l$ be the reading of $T$.
\begin{enumerate}
\item If $T=\varnothing$ then $b\to T=\begin{array}{|c|}\hline b \\ \hline \end{array}\:,$
\item if $T=\begin{array}{|c|}\hline T_{1,1} \\ \hline \end{array}\:$, in other words if it consists of a single box and $b\leq T_{1,1}$, then $b\to \begin{array}{|c|}\hline T_{1,1} \\ \hline \end{array}=\begin{array}{|c|c|}\hline b &T_{1,1} \\ \hline \end{array}\:,$
\item if $b> T_{k,1}$ or $(T_{k,1},b)=(n,\overline{n}),$ or $(\overline{n},n)$, and there exists some $y\leq n\in \mathcal{B}$ in the sequence $S=(T_{1,1},T_{2,1},\dots, T_{k,1},b)$ such that $\overline{y}$ is also in $S$ and $\#\{x\in S| x\leq y \text{ or }x\geq \overline{y}\}>y$, then $b\to T$ is the column formed by removing $(z,\overline{z})$ where $z$ is the least such letter that occurs,
\item otherwise, if $b> T_{k,1}$ or $(T_{k,1},b)=(n,\overline{n}),$ or $(\overline{n},n)$ then $b\to T$ is just the column $T$ with $\begin{array}{|c|}\hline b\\ \hline \end{array}$ appended to the bottom,
\item otherwise, it is the case that $b < T_{1,k},k\geq 2$.  In this case, let $T_{1,2}'\otimes T_{1,1}'\otimes T_{2,1}'\cdots \otimes T_{1,k}'=\xi_{1}\xi_{2}\cdots\xi_{k-1}(T_{1,1}\otimes T_{2,1}\otimes \cdots \otimes T_{k,1}\otimes b)$.  Then:
\begin{equation*}
b\to T=\begin{array}{@{\vline}c@{\vline}c|}\hline T_{1,1}'&\:T_{1,2}'\\ \hline \begin{array}{@{\:}c@{\:}}T_{2,1}' \\ \hline \vdots \\ \hline T_{k,1}' \\ \hline\end{array}
\end{array}
\end{equation*}
\end{enumerate}
Now, let $T=T^1 T^2\cdots  T^k$, where $T^1,T^2,\dots, T^k$ are the columns of $T$.  Define the insertion $b\to T,b\in \mathcal{B}$ recursively as follows:
\begin{enumerate}
\item Compute $b\to T^1$ as above.  If case 4 occurs, then 
\begin{equation*}
b\to T=(b\to T^1) T^2 \cdots  T^k.
\end{equation*}
\item If case 3 occurs, let $b\to T=T_{1,1}'\otimes T_{1,2}'\otimes \cdots \otimes T_{1,l}'$ for some $l\geq 0$.  Then $b\to T$ is given by successively inserting the letters of $(T^1)'$ into $T\backslash T^1$as follows:
\begin{equation*}
b\to T=T_{l,1}'\to (T_{l-1,1}'\to(\cdots \to T_{1,1}'\to  T^2 T^3 \cdots T^k)).
\end{equation*}
\item If case 2 or 5 occurs, then notice that in both cases, $b\to T^1= (T^1)' (T^2)'$ where $(T^1)'$ is a column and $(T^2)'=\begin{array}{|c|}\hline c\\ \hline \end{array}\:,$ for some $c\in \mathcal{B}$.  Then we define:
\begin{equation*}
b\to T=(T^1)'(b\to T^2 T^3 \cdots T^k).
\end{equation*}
\end{enumerate}
Lecouvey \cite{L} defines an oscillating tableau $Q$ of type $D_n$ to be a sequence $Q_0,Q_1,Q_2,\dots,Q_l$ of pairs $Q_k=(O_k,\varepsilon_k)$, where $O_k$ is a Young diagram whose columns have height $\leq n$ and $\varepsilon_k\in \{-,0,+\}$, satisfying, for $k=1,2,\dots, l$:
\begin{enumerate}
\item $O_k$ and $O_{k+1}$ differ by exactly one box,
\item $\varepsilon_k \neq 0$ and $\varepsilon_{k+1}\neq 0$ imply $\varepsilon_k=\varepsilon_{k+1}$, and
\item $\varepsilon_k=0$ if and only if $O_k$ has no columns of height $n$.
\end{enumerate}
Then, we have the following analogue of the Robinson-Schensted correspondence for type $D_n$:
\begin{theorem}[\cite{L}]\label{insertion}
There is a bijection between the set $B(\Lambda_1)^{\otimes l}$ and the set of all pairs $(P,Q)$  where $Q=(Q_0=(\varnothing,0),Q_1,Q_2,\dots, Q_l)$ is an oscillating tableau, and $P$ is a Kashiwara-Nakashima tableau of shape $O_l$ such that, if $P$ has a column of height $n$, whose $k$th entry is $n$ (resp. $\overline{n}$) then $n-k$ is even (resp. odd) if $\varepsilon_l=+$, and $n-k$ odd (resp. even) if $\varepsilon_l=-$.  
\end{theorem}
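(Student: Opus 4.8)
The plan is to prove the correspondence by realizing the insertion map as an explicit crystal isomorphism and then analyzing its fibers. First I would define the forward map directly: given a word $w=b_1\otimes\cdots\otimes b_l\in B(\Lambda_1)^{\otimes l}$, set $P_0=\varnothing$ and $P_k=b_k\to P_{k-1}$, so that $P=P_l$ is obtained by iterated single-letter insertion. Simultaneously I would build $Q$ by recording, at each step, the shape $O_k=\mathrm{shape}(P_k)$ together with a sign $\varepsilon_k\in\{-,0,+\}$ tracking whether the bottom entry of a newly completed height-$n$ column is $n$ or $\overline{n}$ (and equal to $0$ when no height-$n$ column is present). The first task is well-definedness: I would show that each $b\to T$ produces a genuine Kashiwara--Nakashima tableau, and that the sequence $(O_k,\varepsilon_k)$ satisfies the three defining conditions of an oscillating tableau. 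Both facts reduce to properties of the elementary isomorphism $\xi:B(1\,2\,1)\to B(1\,1\,2)$: because each $\xi_i$ is a crystal isomorphism, a single insertion either lengthens the first column by one box, or cancels one pair $(z,\overline{z})$ and shortens the shape by one box, which is exactly the oscillating behavior, while the admissibility conditions on KN columns are preserved case by case.

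The key structural lemma I would isolate is that single-letter insertion is compatible with the finite crystal operators: for $1\le i\le n$, insertion intertwines the action, so that $\tilde{f}_i w\mapsto(\tilde{f}_iP,\,Q)$ whenever $\tilde{f}_i w\neq 0$, and likewise $\tilde{e}_i w\mapsto(\tilde{e}_iP,\,Q)$. Granting this, $Q$ is constant on each connected component of $B(\Lambda_1)^{\otimes l}$, while $P$ supplies the coordinate of $w$ inside that component. Since each connected component is isomorphic to some $B(\lambda)$ and is generated from its highest weight vector by the $\tilde{f}_i$, it then suffices to match highest weight words with oscillating tableaux: I would verify that the $P$-tableau of a highest weight word is the highest weight KN tableau of its shape, that distinct components yield distinct $Q$, and that $\varepsilon_l$ together with the parity condition on height-$n$ columns exactly distinguishes the two half-spin components sharing the same underlying shape.

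To prove bijectivity I would construct the inverse explicitly by reverse insertion driven by $Q$. Reading $Q$ from $Q_l$ down to $Q_0$, at each step $O_k$ and $O_{k-1}$ differ by one box; if a box was added I reverse-bump to extract a letter and recover $P_{k-1}$ from $P_k$, and if a box was removed I perform the inverse of the cancellation step, reinserting a pair $(z,\overline{z})$ and ejecting a letter, with the signs $\varepsilon_k$ dictating how the ambiguous height-$n$ columns are treated. Mutual invertibility then follows from a local reversibility statement: each $b\to T$ is recovered from its output together with the one-box shape change and the sign datum, which is precisely the bijectivity of $\xi$ read backwards.

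The main obstacle is the reversibility of the cancellation (box-removal) step together with the type $D_n$ sign bookkeeping. Unlike the type $A$ correspondence, here the shape can decrease, and one must show that when a pair $(z,\overline{z})$ is removed the data retained in $Q$ uniquely determines both the removed pair and the letter that was inserted; the genuinely new cases are those of $\xi$ involving $n,\overline{n},n-1,\overline{n-1}$, where $n$ and $\overline{n}$ are incomparable and the two spin directions must be disentangled. I expect the bulk of the work to be a careful case analysis verifying that insertion intertwines $\tilde{e}_n,\tilde{f}_n$ correctly and that the stated parity rule (with $n-k$ even or odd according to the sign) is forced, after which the crystal-isomorphism framework delivers the bijection automatically.
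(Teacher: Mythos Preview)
The paper does not prove this theorem: it is quoted from Lecouvey \cite{L} and only the explicit description of the bijection is given afterwards, with no argument supplied. So there is nothing in this paper to compare your proof against.

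That said, your sketch is a faithful outline of the standard strategy (and of Lecouvey's original argument): build $P$ by iterated column insertion, record $Q$ as the sequence of shapes with the spin sign, prove that insertion commutes with the classical crystal operators $\tilde{e}_i,\tilde{f}_i$ for $1\le i\le n$ so that $Q$ labels connected components while $P$ locates the vertex inside its component, and construct the inverse by reverse bumping driven by $Q$. Your identification of the genuinely delicate point---reversibility of the box-removal step and the $n/\overline{n}$ sign bookkeeping distinguishing the two spin components---is accurate. One small caution: in your description you say the sign tracks ``whether the bottom entry of a newly completed height-$n$ column is $n$ or $\overline{n}$''; the actual rule (as the theorem statement indicates) depends on the parity of $n-k$ for whichever row $k$ holds the $n$ or $\overline{n}$, not just the bottom entry, so be sure your sign convention matches that parity condition when you write out the details.
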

Explicitly, this bijection is given by sending $b_1\otimes b_2\otimes \cdots \otimes b_l\in B(\Lambda_1)^{\otimes l}$ to $(P_l,Q)$, where $P_k=b_k \to(b_{k-1}\to(\cdots b_1\to\varnothing)), 1\leq k \leq l$, $Q_k=(O_k,\varepsilon_k),1\leq k \leq l$, $O_k$ the shape of the tableau $P_k$, and $\varepsilon_k=0$ is $P_k$ has no columns of height $n$, $\varepsilon_k=+$ if $P_k$ has a column of height $n$ whose $k$th entry is $n$ (resp. $\overline{n}$) such that $n-k$ is even (resp. odd), and $\varepsilon_k=-$ otherwise.

\emph{Example:} The element $2\otimes 4 \otimes \overline{4} \otimes 3$ of the $D_4$-crystal $B(\Lambda_1)^{\otimes 4}$ corresponds to the following sequence of tableaux:
\begin{itemize}
\item $2\ \to \varnothing =\begin{array}{|c|}\hline 2\\ \hline \end{array}\:$,
\item $4\to \begin{array}{|c|}\hline 2\\ \hline \end{array}=\begin{array}{|c|}\hline 2\\ \hline 4\\ \hline \end{array}\:,$
\item $\overline{4}\to \begin{array}{|c|}\hline 2\\ \hline 4 \\ \hline\end{array}=\begin{array}{|c|}\hline 2\\ \hline 4\\ \hline \overline{4}\\ \hline\end{array}\:,$
\item $3 \to \begin{array}{|c|}\hline 2\\ \hline 4\\ \hline \overline{4}\\ \hline\end{array}\:$.  We compute $\xi_1\xi_2(2\otimes 4\otimes \overline{4}\otimes 3)=\xi_1(2\otimes 4\otimes 3\otimes \overline{4})=4\otimes 2\otimes 3\otimes 4=T_{1,2}'\otimes T_{1,1}'\otimes T_{2,1}'\otimes T_{3,1}'.$ The resulting tableau is: $\begin{array}{@{\vline}c@{\vline}c|}\hline 2&\:4\\ \hline \begin{array}{@{\:}c@{\:}}3 \\ \hline \overline{4}\\ \hline\end{array}\end{array}\:.$
\end{itemize}
\subsection{Combinatorial $R:B^{r,s}\otimes B^{r',s'}\to B^{r',s'}\otimes B^{r,s}$}

In this section, we define the combinatorial $R$ matrix for $B^{r,s}\otimes B^{r',s'}$ and the energy function $H:B^{r,s}\otimes B^{r',s'}\to \mathbb{Z}$.

\begin{proposition}[\cite{KMN}]
There exist a unique crystal isomorphism $\mathcal{R}:B^{r,s}\otimes B^{r',s'}\to B^{r',s'}\otimes B^{r,s}$ and a function $H:B^{r,s}\otimes B^{r',s'}\to \mathbb{Z}$ unique up to an additive constant satisfying the following property: for any $b\in B^{r,s}$, $b'\in B^{r',s'},$ and $i\in I$ such that $\tilde{e_i}(b\otimes b')\neq 0$, 
\begin{equation}
H(\tilde{e}_i(b\otimes b'))=\left \{ \begin{array}{ll}
		H(b\otimes b')+1 & \text{if }i=0, \varphi_0(b)\geq \varepsilon_0(b'), \varphi_0(\tilde{b}')\geq \varepsilon_0(\tilde{b}),\\
		H(b\otimes b')-1 & \text{if }i=0, \varphi_0(b) < \varepsilon_0(b'), \varphi_0(\tilde{b}') < \varepsilon_0(\tilde{b}),\\
		H(b\otimes b') & \text{otherwise,}
\end{array}
\right .
\end{equation}
where $\tilde{b}'\otimes \tilde{b}=\mathcal{R}(b\otimes b').$  $H$ is called an \emph{energy function} on $B^{r,s}\otimes B^{r',s'}.$ 
\end{proposition}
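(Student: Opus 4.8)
The plan is to construct and pin down $\mathcal{R}$ first, and then to extract $H$ from it, using throughout the single structural fact that $B^{r,s}\otimes B^{r',s'}$ is a \emph{connected} crystal possessing a unique element $u$ of maximal (classically dominant) weight. For existence of $\mathcal{R}$ I would invoke that $B^{r,s}$ and $B^{r',s'}$ are the crystals of the Kirillov--Reshetikhin modules $V^{r,s},V^{r',s'}$ recalled earlier. Their affinizations $V^{r,s}_x\otimes V^{r',s'}_y$ and $V^{r',s'}_y\otimes V^{r,s}_x$ are isomorphic as $U_q'(\mathfrak{g})$-modules through the normalized quantum $R$-matrix $\bar R(x/y)$ for generic spectral parameters; since these are good modules, $\bar R$ preserves the crystal lattices and, in the limit $q\to 0$, descends to a bijection that by construction commutes with every $\tilde e_i,\tilde f_i$ and preserves $\mathrm{wt},\varepsilon_i,\varphi_i$. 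This limit is the desired crystal isomorphism $\mathcal{R}$, following \cite{KMN}.

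For uniqueness, suppose $\mathcal{R}_1,\mathcal{R}_2$ both satisfy the conclusion. Then $\Phi:=\mathcal{R}_2^{-1}\circ\mathcal{R}_1$ is a weight-preserving automorphism of $B^{r,s}\otimes B^{r',s'}$ commuting with all Kashiwara operators. Because $u$ is the only element of its weight and $\Phi$ preserves weight, $\Phi(u)=u$; because the crystal is connected, every element is reached from $u$ by a word in the $\tilde e_i,\tilde f_i$, and commutation then forces $\Phi=\mathrm{id}$, i.e. $\mathcal{R}_1=\mathcal{R}_2$. The only input needed here beyond Definition~\ref{tensor} is connectedness of the tensor product together with uniqueness of the top element, both standard for tensor products of perfect crystals.

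To obtain $H$, I would read the displayed relation as a local prescription: it forces $H$ to be constant along every $i$-arrow with $i\neq 0$, while along a $0$-arrow $H$ increases by $1$, decreases by $1$, or is unchanged according to the stated inequalities between $\varphi_0(b),\varepsilon_0(b')$ and between $\varphi_0(\tilde b'),\varepsilon_0(\tilde b)$. Uniqueness of $H$ up to an additive constant is then immediate: fixing the value $H(u)$ and using connectedness, the value at any element equals $H(u)$ plus the signed sum of the prescribed jumps along any path from $u$ to it, so $H$ is determined once its jumps are. The real content is \emph{existence}, i.e. that this path-sum is well defined, equivalently that the signed jumps around every closed loop of the crystal graph cancel.

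The main obstacle is precisely this loop-consistency, and I would not attempt to verify it loop by loop. Instead I would define $H$ globally by lifting $\mathcal{R}$ to an isomorphism of the affine crystals $\mathrm{Aff}(B^{r,s})\otimes\mathrm{Aff}(B^{r',s'})\to \mathrm{Aff}(B^{r',s'})\otimes\mathrm{Aff}(B^{r,s})$ intertwining the spectral shift; the integer degree-shift this lift attaches to each $b\otimes b'$ is a single, globally defined function on $B^{r,s}\otimes B^{r',s'}$, hence automatically loop-consistent, and unique up to the choice of base degree. The one genuinely delicate computation is then to check, directly from the tensor-product rule of Definition~\ref{tensor}, that this degree-shift jumps under $\tilde e_0$ exactly as the three cases of the statement require; this reduces to the case analysis of whether $\varphi_0(b)\geq\varepsilon_0(b')$ and whether $\varphi_0(\tilde b')\geq\varepsilon_0(\tilde b)$, together with the matching behavior of $\mathcal{R}$ on the same element, and is where essentially all the work resides.
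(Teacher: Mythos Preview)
The paper does not give its own proof of this proposition: it is stated with a citation to \cite{KMN} and no argument follows (the next item is the Yang--Baxter proposition). So there is nothing in the paper to compare your proposal against.

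That said, your sketch is essentially the standard \cite{KMN} argument the paper is invoking: existence of $\mathcal{R}$ from the $q\to 0$ limit of the quantum $R$-matrix on good modules, uniqueness from connectedness plus a unique extremal element, and the construction of $H$ as the degree shift in the lift of $\mathcal{R}$ to the affinized tensor product. Your identification of the only substantive computation---checking that the degree shift jumps along a $0$-arrow according to the four-way case split on $(\varphi_0(b)\geq\varepsilon_0(b'),\ \varphi_0(\tilde b')\geq\varepsilon_0(\tilde b))$---is correct, and that is exactly what is carried out in \cite{KMN}. One small caution: for the uniqueness step you appeal to a ``unique element of maximal classically dominant weight''; in the perfect-crystal setting this is fine, but the cleaner and more robust statement is simply that $B^{r,s}\otimes B^{r',s'}$ is connected as a $U_q'(\mathfrak{g})$-crystal, so any automorphism commuting with all $\tilde e_i,\tilde f_i$ is determined by its value on one element, and weight preservation then pins that value down.
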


\begin{proposition}[Yang-Baxter Equation]
The following equation holds on $B^{r,s}\otimes B^{r',s'}\otimes B^{r'',s''}$
\begin{equation}
(\mathcal{R}\otimes 1)(1\otimes \mathcal{R})(\mathcal{R}\otimes 1)=(1\otimes \mathcal{R})(\mathcal{R}\otimes 1)(1\otimes \mathcal{R}), \label{YBE}
\end{equation}
where $1$ denotes the identity map.
\end{proposition}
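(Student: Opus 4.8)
The plan is to deduce the Yang--Baxter equation \eqref{YBE} purely from the uniqueness of crystal isomorphisms asserted in the preceding proposition, without computing either side explicitly. First I would observe that each factor $\mathcal{R}\otimes 1$ and $1\otimes \mathcal{R}$ is a crystal isomorphism, and that a composition of crystal isomorphisms is again a crystal isomorphism; consequently both the left-hand side $(\mathcal{R}\otimes 1)(1\otimes\mathcal{R})(\mathcal{R}\otimes 1)$ and the right-hand side $(1\otimes\mathcal{R})(\mathcal{R}\otimes 1)(1\otimes\mathcal{R})$ are crystal isomorphisms with the same source $B^{r,s}\otimes B^{r',s'}\otimes B^{r'',s''}$ and the same target $B^{r'',s''}\otimes B^{r',s'}\otimes B^{r,s}$. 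One should first track the intermediate crystals to confirm that the three-fold compositions are well defined and that both routes terminate at the same codomain. The equality \eqref{YBE} will then follow once I show that there is at most one crystal isomorphism between these two triple tensor products.

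For the uniqueness step, I would use that a crystal isomorphism $\Phi$ intertwines all the operators $\tilde{e}_i,\tilde{f}_i$ and preserves $\mathrm{wt},\varepsilon_i,\varphi_i$; hence on any connected component of the source its value is completely determined by its value at a single element. I would therefore (i) argue that $B^{r,s}\otimes B^{r',s'}\otimes B^{r'',s''}$ is connected as a $U_q'(D_n^{(1)})$-crystal, using the perfectness of the $B^{r,s}$, and (ii) exhibit a distinguished element---an extremal/highest-weight vector $u$ of the source---whose image is forced by weight considerations, so that any two crystal isomorphisms must agree at $u$. Combining (i) and (ii), two crystal isomorphisms agreeing at $u$ and commuting with the Kashiwara operators agree everywhere, which gives uniqueness; applied to the two sides of \eqref{YBE} this yields the identity. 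Equivalently, one may phrase the argument as follows: the composite $\Phi'^{-1}\circ\Phi$ of the two sides is a crystal automorphism of the connected crystal $B^{r,s}\otimes B^{r',s'}\otimes B^{r'',s''}$, and the only such automorphism is the identity.

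The main obstacle is the combination of steps (i) and (ii), namely establishing connectedness together with the rigidity needed to pin the isomorphism at one point; this is exactly where the special structure of the perfect crystals $B^{r,s}$ (as opposed to an arbitrary finite crystal) must be invoked, since for a disconnected crystal, or one admitting a nontrivial automorphism, the two sides of \eqref{YBE} need not coincide. I expect the verification that the triple tensor product is connected---or, more economically, that it has a unique element of maximal classical weight fixed by every automorphism---to be the technical heart of the argument. Once this rigidity is in hand, the Yang--Baxter equation is immediate from uniqueness and requires no case analysis on the combinatorial $R$ matrix itself.
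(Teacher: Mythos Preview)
The paper gives no proof of this proposition; it is stated without argument as a known result (the surrounding propositions are attributed to \cite{KMN}), so there is nothing to compare against directly.

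Your outline is exactly the standard argument in the literature: both sides of \eqref{YBE} are compositions of crystal isomorphisms between the same source and target, and once the source is connected there is at most one such isomorphism. The one imprecision is the mechanism you invoke for step~(i). Perfectness by itself does not immediately yield connectedness of arbitrary tensor products; the clean way to get what you need is the notion of a \emph{simple} crystal in the sense of Akasaka--Kashiwara: each $B^{r,s}$ is simple (connected, with a unique element whose weight lies in the Weyl-group orbit of the dominant extremal weight and is dominant), and a tensor product of simple crystals is again simple, hence connected with a unique classically highest element of maximal weight. This simultaneously handles your steps~(i) and~(ii), since that unique element is fixed by any crystal automorphism. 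With that input your argument goes through verbatim, and indeed no case analysis on $\mathcal{R}$ is required.
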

We  define $\mathcal{R}_{i,i+1}$ to be the map 
$$1^{\otimes i-1}\otimes \mathcal{R}\otimes 1^{\otimes L-i-1}:\bigotimes_{k=1}^L B^{r_k,s_k}\to \bigotimes_{k=1}^L B^{r_{\tau(k)},s_{\tau(k)}}$$
where $\tau$ transposes $i,i+1$ and fixes all other integers.  In this notation, \eqref{YBE} becomes:
\begin{equation}
\mathcal{R}_{i,i+1}\mathcal{R}_{i+1,i+2}\mathcal{R}_{i,i+1}=\mathcal{R}_{i+1,i+2}\mathcal{R}_{i,i+1}\mathcal{R}_{i+1,i+2}
\end{equation}
for $L\geq 3, i=1,2,\dots, L-2.$

\begin{definition}[Affinization]
The \emph{affinization} of $B^{r,s}$ is defined to be the set
$$
\{ z^nb | n\in \mathbb{Z}, b\in B^{r,s}\}
$$
where the action of $\tilde{e}_i, \tilde{f}_i$ is defined as:
\begin{eqnarray*}
\tilde{e}_i(z^nb)&=&z^{n+\delta_{i0}}\tilde{e}_i(b)\\
\tilde{f}_i(z^nb)&=&z^{n-\delta_{i0}}\tilde{f}_i(b).
\end{eqnarray*}
\end{definition}
The combinatorial $R$-matrix $\mathcal{R}^{\text{Aff}}:\text{Aff}(B^{r,s})\otimes \text{Aff}(B^{r',s'})\to\text{Aff}(B^{r',s'})\otimes \text{Aff}(B^{r,s})$ is given by:
\begin{equation}
\mathcal{R}^{\text{Aff}}(z^m b\otimes z^n b')=z^{n+H(b\otimes b')}\tilde{b}' \otimes z^{m-H(b\otimes b')}\tilde{b}
\end{equation}
where $\tilde{b}'\otimes \tilde{b}=\mathcal{R}(b\otimes b')$.
\subsection{Combinatorial $R$-matrix for $B^{1,s}\otimes B^{1,s'}$}
The combinatorial $R$-matrix for the $D_n^{(1)}$-crystals $B^{1,s}\otimes B^{1,s'}$ has been given in \cite{HKOT}.  For the tableaux:

$$
b\otimes b' = (x_1,x_2,\dots,\overline{x}_1)\otimes (y_1,y_2,\dots, \overline{y}_1)\in B^{1,s}\otimes B^{1,s'}
$$
 we set $z=\min(x_1,\overline{y}_1)$ so that $b\otimes b'$ are associated with the tableaux:
$$
\begin{array}{l @{} l @{} l @{} l }\underbrace{\begin{array}{|c|c|c|c|}\hline 1 & 1 & \cdots & 1 \\ \hline\end{array}}_{z}& 
\begin{array}{c|} \hline T_* \\ \hline \end{array}\:\otimes\: 
\begin{array}{|c|c|c|c|}\hline v_1 & v_2 & \cdots & v_k \\ \hline\end{array}
&\underbrace{\begin{array}{c|c|c|c|}\hline  \overline{1} & \overline{1} & \cdots & \overline{1}\\ \hline \end{array}}_{z}\end{array}
$$
where $k=s'-z$.  Now, define the tableau $T^{(0)}=v_1\to(v_2\to (\cdots v_k\to T_*)\cdots)$, which is necessarily of the form, for $l=s-z$:
$$
\begin{array}{l @{} l @{} l @{} }
\begin{array}[t]{|c}\hline j_1 \\ \hline \end{array}&\begin{array}[t]{|c|c|} \hline \cdots \cdots \cdots& j_k \\ \hline \end{array}&
\begin{array}[t]{c|c|c|}\hline i_{m+1}&\cdots& i_{l}\\ \hline \end{array}\\
\begin{array}[t]{|c} i_1 \hspace{.3mm}\\ \hline \end{array}&\begin{array}[t]{|c|c|} \cdots & i_m\\ \hline \end{array}
\end{array}
$$
for some $0\leq m \leq \lfloor\frac{k+l}{2} \rfloor$.  Next, we use reverse column insertion to remove the boxes containing $i_{l},i_{l-1},\dots,i_{m+1}, i_m,\cdots, i_1$ in order.  Let $w_1,w_2,\dots, w_{l}$ be the sequence of letters that are produced in each step, and $T^{(1)},T^{(2)},\dots T^{(k)}$ denote the sequence of tableaux.  Then 
\begin{equation*}
\mathcal{R}(b\otimes b')=\begin{array}{l @{} l @{} l @{} l }\underbrace{\begin{array}{|c|c|c|c|}\hline 1 & 1 & \cdots & 1 \\ \hline\end{array}}_{z}& 
\begin{array}{c|} \hline T^{(l)} \\ \hline \end{array} \otimes 
\begin{array}{|c|c|c|c|}\hline w_1 & w_2 & \cdots & w_l \\ \hline\end{array}& 
\underbrace{\begin{array}{c|c|c|c|}\hline  \overline{1} & \overline{1} & \cdots & \overline{1}\\ \hline \end{array}}_{z}\end{array}
\end{equation*}
The energy function is given by:
\begin{equation*}
H(b \otimes b')=2\min(l,k)-m-2s_2.
\end{equation*}
\subsection{Combinatorial $R$-matrix for $A_n^{(1)}$}
Define $\widehat{B}^{r,s}$ to be the set of $r\times s$ semistandard tableaux in the alphabet $\{1',2',\dots, n'\}.$  The crystal structure of this set is given in \cite{Sh}, and the combinatorial $R$ matrix is given as well in loc. cit. (see also \cite{Yd1}) by the following row insertion procedure.

To an element $$T=\begin{array}{|c|c|c|c|}\hline T_{1,1}&T_{1,2}&\cdots &T_{1,s}\\ \hline T_{2,1}&T_{2,2}&\cdots & T_{2,s}\\ \hline \vdots & \vdots & \ddots & \vdots\\ 
\hline T_{r,1}&T_{r,2}&\cdots &T_{r,s}\\ \hline \end{array}\in \widehat{B}^{r,s}$$
we associate the row word $\text{row}(T)=T_{1,s}\otimes T_{1,s-1}\otimes T_{1,1}\otimes T_{2,s}\otimes T_{2,s-1}\otimes \cdots \otimes T_{2,1}\otimes \cdots \otimes T_{r,s}\otimes T_{r,s-1}\otimes \cdots \otimes T_{r,1}.$   Then we have the following, which follows from the fact that the decomposition of $\widehat{B}^{r,s}\otimes \widehat{B}^{r',s'}$ is outer-multiplicity free as $A_n$-crystals:
\begin{theorem}[\cite{Sh}, see \cite{Yd1}]\begin{enumerate}Let $T\otimes T'\in \widehat{B}^{r,s}\otimes \widehat{B}^{r',s'}$.  Then:
\item the map $\widehat{\mathcal{R}}:\widehat{B}^{r,s}\otimes \widehat{B}^{r',s'}\to \widehat{B}^{r',s'}\otimes \widehat{B}^{r,s}$ is given by the following: 
$$
\widehat{\mathcal{R}}(T\otimes T')=\widetilde{T'}\otimes \widetilde{T}
$$
if and only if $\text{row}(T')\to T=\text{row}(\widetilde{T})\to T',$ where $\text{row}(T)\to T'$ denotes the resulting tableau from row inserting all the letters of $\text{row}(T)$ in order into $T'$,
\item let $d(T,T')$ denote the number of nodes in the shape of $\text{row}(T)\to T'$ that are strictly to the right of the $\max(s,s')$ column.  Then the energy function $\widehat{H}:B^{r,s}\otimes B^{r',s'}\to \mathbb{Z}$ is given by
$$
\widehat{H}(T\otimes T')=d(T,T')-\min(r,r')\min(s,s').
$$
\end{enumerate}
\end{theorem}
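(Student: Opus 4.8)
The plan is to reduce both parts to the classical (that is, $A_n$) crystal structure, where row insertion provides an explicit model, and then to treat the affine node $0$ separately. First I would recall the Lascoux--Sch\"utzenberger description of $B(\Lambda_1)^{\otimes N}$: reading any element as a word $w$ and passing to its Robinson--Schensted--Knuth data, the operators $\tilde e_i,\tilde f_i$ with $1\le i\le n$ act only on the insertion tableau $P(w)$ and leave the recording tableau $Q(w)$ fixed. Thus $P(w)$ is a coordinate locating $w$ inside the irreducible component $B(\mathrm{sh}(P(w)))$, while $Q(w)$ names that component; equivalently, row insertion is a morphism of $A_n$-crystals. Identifying $T\in\widehat B^{r,s}$ and $T'\in\widehat B^{r',s'}$ with their row words and concatenating, the element $T\otimes T'$ acquires such an insertion tableau, which is exactly $\mathrm{row}(T')\to T$ in the notation of the statement.

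For Part 1 I would argue as follows. Since a product of two rectangular Schur functions is multiplicity-free, $\widehat B^{r,s}\otimes\widehat B^{r',s'}$ decomposes into pairwise non-isomorphic $A_n$-irreducibles; hence there is a \emph{unique} isomorphism of $A_n$-crystals onto $\widehat B^{r',s'}\otimes\widehat B^{r,s}$, and by the previous paragraph it is forced to send each element to the one carrying the same insertion datum. Writing the image as $\widetilde{T'}\otimes\widetilde T$, this coincidence of insertion tableaux is precisely the asserted identity $\mathrm{row}(T')\to T=\mathrm{row}(\widetilde T)\to T'$. Here I must also check that this identity determines $\widetilde T$ uniquely as a tableau of the prescribed shape $(s^r)$, and then $\widetilde{T'}$ as the complementary tableau of shape $({s'}^{r'})$: this is where reverse insertion and the rectangular shape constraints enter, and where multiplicity-freeness again guarantees there is no ambiguity. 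Finally, because the classical isomorphism is unique, the general theory forces it to intertwine $\tilde e_0,\tilde f_0$ as well, so it coincides with the affine combinatorial $R$-matrix $\widehat{\mathcal R}$.

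For Part 2 I would verify that the proposed formula satisfies the defining property of the energy function recorded in the earlier Proposition. The key point is that $d(T,T')$, the number of boxes strictly to the right of column $\max(s,s')$, depends only on the shape of the insertion tableau, hence only on the $A_n$-component; it is therefore automatically unchanged by $\tilde e_i$ for $i\neq 0$, matching the ``otherwise'' case of the recursion. For $i=0$ the action is the promotion operator, which alters the shape in a controlled manner, and I would check case by case that $d$ rises by one, falls by one, or is constant exactly according to the three alternatives of the recursion. The additive constant is then pinned down by evaluating at a highest-weight configuration, where the number of boxes beyond column $\max(s,s')$ equals $\min(r,r')\min(s,s')$ and so $\widehat H=0$.

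The main obstacle is the interface with the affine node. Everything classical reduces cleanly to multiplicity-freeness together with the fact that insertion is a crystal morphism, but $\tilde e_0,\tilde f_0$ act by promotion rather than by a plactic move, so both the compatibility of the insertion isomorphism with the $0$-action and the response of $d(T,T')$ to $\tilde e_0$ demand a separate promotion analysis. A genuine secondary difficulty is bookkeeping of the reading and insertion conventions: one must fix the direction of the row reading and the variant of insertion so that the clean identity $\mathrm{row}(T')\to T=\mathrm{row}(\widetilde T)\to T'$ emerges in exactly this form rather than a transposed, reversed, or factor-swapped variant.
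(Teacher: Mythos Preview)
The paper does not actually prove this theorem: it is quoted from \cite{Sh} (see also \cite{Yd1}), with the single remark that it ``follows from the fact that the decomposition of $\widehat{B}^{r,s}\otimes \widehat{B}^{r',s'}$ is outer-multiplicity free as $A_n$-crystals.'' Your proposal is precisely an unpacking of that remark: the multiplicity-freeness forces uniqueness of the $A_n$-isomorphism, row insertion realizes it concretely, and the affine node and energy function are then handled separately. So your approach is entirely consonant with the one-line justification the paper offers, and goes well beyond it in detail; there is nothing further to compare, since the paper defers the argument to the cited references.
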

To the tableau $T\in \widehat{B}^{r,s}$ we associate the coordinatization $$\begin{pmatrix}x_{1,1}&x_{1,2}&\cdots&x_{1,n}\\
x_{2,1}&x_{2,2}&\cdots & x_{2,n}\\
\vdots&\vdots&\ddots& \vdots\\
x_{r,1}& x_{r,2}&\cdots &x_{r,n}
\end{pmatrix}
\in \mathbb{Z}_{\geq 0}^{r\times n}.$$
\section{Combinatorial $R$-matrix for $D_n^{(1)}$}
In this section, we give the combinatorial $R$-matrix for the $U_q(D_n^{(1)})$-crystals $B^{2,s}\otimes B^{2,1}$, first for the highest weight vectors and then for arbitrary vectors using the analogue of the Robinson-Shensted correspondence (Theorem \ref{insertion}).

The $D_n,n\geq4$ highest weight vectors of $B^{2,s}\otimes B^{2,1}$ are as follows:
\begin{center}
\begin{tabular}{|l|l|}
\hline
Highest weight vector & Classical Weight\\
\hline\hline
\rule{0pt}{5ex}$u_k\otimes \begin{array}{|c|}\hline 1\\ \hline 2\\ \hline \end{array}\:,0\leq k \leq s$ & $(k+1)\Lambda_2$\\
$u_k\otimes \varnothing, 0\leq k \leq s$& $k\Lambda_2$\\
\rule{0pt}{5ex}$u_k\otimes \begin{array}{|c|}\hline 1\\ \hline 3\\ \hline \end{array}\:,1\leq k \leq s$ & 
								$\left \{\begin{array}{l}\Lambda_1+(k-1)\Lambda_2+\Lambda_3,n>4\\
								\Lambda_1+(k-1)\Lambda_2+\Lambda_3+\Lambda_4,n=4\end{array}\right .$\\
$u_k\otimes \begin{array}{|c|}\hline 3\\ \hline 4\\ \hline \end{array}\:, 1\leq k \leq s$ & $\left \{\begin{array}{l}(k-1)\Lambda_2+\Lambda_4,n>5\\
										         (k-1)\Lambda_2+\Lambda_4+\Lambda_5,n=5\\
											(k-1)\Lambda_2+2\Lambda_4,n=4\end{array}\right .$\\
$u_k\otimes \begin{array}{|c|}\hline 1\\ \hline \overline{2}\\ \hline \end{array}\:,1\leq k \leq s$ & $2\Lambda_1+(k-1)\Lambda_2$\\
\rule{0pt}{5ex}$u_k\otimes \begin{array}{|c|}\hline 3\\\hline \overline{2}\\ \hline \end{array}\:,2\leq k \leq s$ & $\left \{\begin{array}{l}
									\Lambda_1+(k-2)\Lambda_2+\Lambda_3,n>4\\
									\Lambda_1+(k-2)\Lambda_2+\Lambda_3+\Lambda_4,n=4
									\end{array}\right .$\\
\rule{0pt}{5ex}$u_k\otimes \begin{array}{|c|}\hline 3\\\hline \overline{3}\\ \hline \end{array}\:, 1\leq k \leq s $& $k\Lambda_2$\\
\rule{0pt}{5ex}$u_k\otimes \begin{array}{|c|}\hline \overline{2}\\ \hline \overline{1}\\ \hline \end{array}\:,1\leq k \leq s$ & $(k-1)\Lambda_2$\\
\rule{0pt}{5ex}$u_k\otimes \begin{array}{|c|}\hline 3\\ \hline \overline{4}\\ \hline \end{array}\:,n=4,1\leq k \leq s$ & $(k-1)\Lambda_2+2\Lambda_3$\\
\hline
\end{tabular}
\end{center}
We will also use the $D_n,n\geq4$ highest weight vectors of $B^{1,1}\otimes B^{2,1}$ and $B^{2,1}\otimes B^{1,1}$:
\begin{center}
\begin{tabular}{|l|l|}
\hline
Highest weight vector in& Classical \\
$B^{1,1}\otimes B^{2,1}$ &weight\\
\hline \hline&\\
$\begin{array}{|c|}\hline 1\\ \hline  \end{array}\:\otimes \:\begin{array}{|c|}\hline 1\\ \hline 2\\ \hline\end{array}$ & $\Lambda_1 + \Lambda_2$\\
&\\
$\begin{array}{|c|}\hline 1\\ \hline  \end{array}\:\otimes \:\begin{array}{|c|}\hline 2\\ \hline 3\\ \hline\end{array}$ & $\Lambda_3+\delta_{n,4}\Lambda_4$\\
&\\
$\begin{array}{|c|}\hline 1\\ \hline  \end{array} \:\otimes \:\varnothing$ & $\Lambda_1$\\
&\\
$\begin{array}{|c|}\hline 1\\ \hline  \end{array}\:\otimes \:\begin{array}{|c|}\hline 2\\ \hline \overline{2}\\ \hline \end{array}$ & $\Lambda_1$\\
&\\
\hline
\end{tabular}
\begin{tabular}{|l|l|}
\hline
Highest weight vector in& Classical \\
$B^{2,1}\otimes B^{1,1}$ &weight\\
\hline \hline
&\\
$\begin{array}{|c|}\hline 1\\ \hline 2\\ \hline\end{array}\: \otimes \:\begin{array}{|c|}\hline 1\\ \hline\end{array}$ & $\Lambda_1 + \Lambda_2$\\
&\\
$\begin{array}{|c|}\hline 1\\ \hline 2\\ \hline\end{array}\: \otimes \:\begin{array}{|c|}\hline 3\\ \hline\end{array}$ & $\Lambda_3+\delta_{n,4}\Lambda_4$\\
&\\
$\varnothing \otimes \:\begin{array}{|c|}\hline 1\\ \hline\end{array}$ & $\Lambda_1$\\
&\\
$\begin{array}{|c|}\hline 1\\ \hline 2 \\ \hline\end{array}\:\otimes\:\begin{array}{|c|}\hline\overline{2}\\ \hline \end{array} $ & $\Lambda_1$\\
&\\
\hline
\end{tabular}
\end{center}

\subsection{Combinatorial $R$ matrix for highest weight elements}
We begin by computing the 0-string through certain elements of $B^{2,s}.$
\begin{lemma}
If $T=\begin{array}{|c|}\hline T_{1,1}\\ \hline T_{2,1} \\ \hline\end{array}\in B(\Lambda_2)\subset B^{2,s}, s\geq 1,$ then the $0$-string through $T$ is given by the following:
\begin{equation}
\tilde{f}_0^jT=\left \{ \begin{array}{ll}
			\begin{array}{|c|c|}\hline 1^{j-1}& 1\\\hline2^{j-1} & T_{1,1}\\ \hline\end{array}\:, & 
						\text{if }T_{2,1}=\overline{2},T_{1,1}\notin \{1,2\},1\leq j\leq s \vspace{1mm}\\  
			\begin{array}{|c|c|}\hline 1^{j-1}&2\\\hline 2^{j-1} &T_{1,1}\\ \hline\end{array}\:, &
						\text{if }T_{2,1}=\overline{1}, T_{1,1}\notin\{2,\overline{2}\},1\leq j\leq s\vspace{1mm} \\
			\begin{array}{|c|}\hline 1^{j-1}\\ \hline 2^{j-1} \\ \hline\end{array}\:, &
			 \text{if } T=\begin{array}{|c|}\hline \overline{2}\\ \hline \overline{1} \\ \hline\end{array},1\leq j \leq s+1\vspace{1mm} \\
			\begin{array}{|c|c|}\hline 1^j&T_{1,1}\\\hline 2^j& T_{2,1}\\ \hline\end{array}\:, & 
						\text{otherwise, if } 1\leq j\leq s-1\\ 
			0,& \text{otherwise.}
\end{array}\right .
\end{equation}
Similarly, we have:
\begin{equation}
\tilde{e}_0^jT=\begin{cases}
			\begin{array}{|c|c|}\hline  T_{2,1}&\overline{2}^{j-1}\\\hline \overline{1}&\overline{1}^{j-1}\\ 
			\hline\end{array}\:, &\text{if }T_{1,1}=2,T_{2,1}\neq \overline{2},1\leq j\leq s\vspace{1mm} \\
			\begin{array}{|c|c|}\hline  T_{2,1}&\overline{2}^{j-1}\\\hline \overline{2}&\overline{1}^{j-1}\\
			\hline\end{array}\:, & \text{if }T_{1,1}=1,T_{2,1}\neq 2,1\leq j\leq s\vspace{1mm} \\
			\begin{array}{|c|}\hline \overline{2}^{j-1}\\ \hline \overline{1}^{j-1} \\ \hline\end{array}\:, &
			\text{if } T=\begin{array}{|c|}\hline 1\\ \hline 2 \\ \hline\end{array}\:, 1\leq j\leq s+1\vspace{1mm}\\ 
			\begin{array}{|c|c|}\hline T_{1,1}&\overline{2}^j\\\hline  T_{2,1}&\overline{1}^j\\ \hline\end{array}\:, & 		
			\text{if }T_{2,1}\notin \{1,2\},	1\leq j\leq s-1\\
			0,& \text{otherwise.}
\end{cases}
\end{equation}
Also, the 0-string through $T=\begin{array}{|c|c|}\hline 1&\overline{2}\\ \hline  3&\overline{1}\\ \hline\end{array}\:$ is given by:
\begin{eqnarray*}
\tilde{f}_0^jT&=&\begin{array}{|c|c|c|}\hline1^{j-1}&1&3\\ \hline  2^{j-1}&3&\overline{3}\\ \hline\end{array}\:, 1\leq j \leq s-1\\
\tilde{e}_0^jT&=&\begin{array}{|c|c|}\hline1&\overline{2}^{j+1}\\ \hline  3&\overline{1}^{j+1}\\ \hline\end{array}\:, 1\leq j \leq s-2
\end{eqnarray*}
\end{lemma}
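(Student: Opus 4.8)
The plan is to reduce the computation of each $0$-string to that of an ordinary $1$-string by exploiting the automorphism $\sigma$. The map $\sigma$ is the $D_n^{(1)}$-crystal automorphism induced by the order-two symmetry of the Dynkin diagram interchanging the nodes $0$ and $1$; consequently $\sigma^2=\mathrm{id}$ and $\sigma\circ\tilde f_1=\tilde f_0\circ\sigma$, which is exactly the defining relation $\tilde f_0 T=\sigma(\tilde f_1\sigma(T))$ recorded in Section 2. Since $\sigma$ is an involution, upon iteration the two inner copies of $\sigma$ cancel, so that
\begin{equation*}
\tilde f_0^{\,j}T=\sigma\bigl(\tilde f_1^{\,j}\,\sigma(T)\bigr),\qquad \tilde e_0^{\,j}T=\sigma\bigl(\tilde e_1^{\,j}\,\sigma(T)\bigr),
\end{equation*}
with the convention $\sigma(0)=0$. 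Thus it suffices to (i) compute $\sigma(T)$, (ii) compute the node-$1$ string through $\sigma(T)$, and (iii) apply $\sigma$ once more. Step (ii) is elementary: on $B^{2,s}$ the operators $\tilde f_1,\tilde e_1$ act through the tensor rule of Definition \ref{tensor} applied to the column reading, using only $\tilde f_1(1)=2$, $\tilde f_1(\overline 2)=\overline 1$ (and $\tilde e_1(2)=1$, $\tilde e_1(\overline 1)=\overline 2$), all other letters being annihilated. Iterating $\tilde f_1^{\,j}$ therefore converts a controlled number of entries $1\mapsto2$ and $\overline2\mapsto\overline1$, and the admissible range of $j$ is read off from $\varphi_1(\sigma(T))=\varphi_0(T)$ (respectively $\varepsilon_1(\sigma(T))=\varepsilon_0(T)$ for the $\tilde e_0$-string), which is what produces the length bounds $s$, $s+1$, and $s-1$ appearing in the four cases.

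The substance of the proof is step (i), computing $\sigma(T)$ from the $*\mathcal{BC}$-duality and column-shift machinery of Section 2. For a single column $T\in B(\Lambda_2)$, write $l=\min\{j\mid\iota_1^{\,j}(T)\neq0\}$; since $\iota_0^1$ produces only the column $\begin{smallmatrix}2\\\overline2\end{smallmatrix}$, we have $l=0$ precisely when $T=\begin{smallmatrix}2\\\overline2\end{smallmatrix}$ and $l=1$ otherwise, whence $T^{*\mathcal{BC}}=\iota_l^1\,\psi^{\,a-b}(\iota_1^{\,l}(T))$ with $a=\#\{i\mid T_{1,i}=1\}$ and $b=\#\{i\mid T_{2,i}=\overline1\}$ each in $\{0,1\}$, and finally $\sigma(T)=\iota_1^{\,s+l'-1}(T^{*\mathcal{BC}})$ with $l'=\min\{j\mid\iota_1^{\,j}(T^{*\mathcal{BC}})\neq0\}$. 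Running the explicit $\psi$-recursion together with the maps $\iota_j^k$ separates exactly the four cases in the statement according to whether $T_{2,1}\in\{\overline1,\overline2\}$, whether $T=\begin{smallmatrix}\overline2\\\overline1\end{smallmatrix}$, or none of these (and the mirror conditions on $T_{1,1}$ for the $\tilde e_0$-string). In the generic case $l'=1$, so $\sigma(T)=\iota_1^{\,s}(T^{*\mathcal{BC}})$ is a width-$s$ tableau, and feeding it through steps (ii) and (iii) reproduces precisely the displayed tableaux; the exceptional cases arise when the shift or the $*\mathcal{BC}$-map drops or adds a column, accounting for the ranges ending at $s+1$ and $s-1$.

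The two-column element $T=\begin{smallmatrix}1&\overline2\\3&\overline1\end{smallmatrix}\in B(2\Lambda_2)$ is handled by the same three-step scheme, now with $k=2$: one computes $\sigma(T)$ from $\iota_2^{\,l},\psi,\iota_l^2$, applies $\tilde f_1^{\,j}$ (resp. $\tilde e_1^{\,j}$) by the elementary node-$1$ rule, and applies $\sigma$ again, the ranges $1\le j\le s-1$ and $1\le j\le s-2$ following from the corresponding values of $\varphi_0(T)$ and $\varepsilon_0(T)$. The main obstacle throughout is step (i): the $*\mathcal{BC}$-duality map $\psi$ and its composition with the column-shift maps $\iota_j^k$ carry numerous sub-cases, in particular the boundary behaviour at the letters $n,\overline n$ and $n-1,\overline{n-1}$. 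Verifying that, in each listed case, this intricate recursion collapses to the stated simple rectangular tableau is where essentially all of the work lies; once $\sigma(T)$ is pinned down, steps (ii) and (iii) are immediate.
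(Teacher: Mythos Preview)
Your proposal is correct and follows essentially the same approach as the paper: reduce the $0$-string to a $1$-string via $\tilde f_0^{\,j}=\sigma\,\tilde f_1^{\,j}\,\sigma$, compute $\sigma(T)$ through $*\mathcal{BC}$-duality and the maps $\iota_j^k$, apply $\tilde f_1^{\,j}$ explicitly, and apply $\sigma$ once more. The paper carries this out by displaying $T^{*\mathcal{BC}}$, $\sigma(T)$ (written in terms of the null configuration $\mathcal{N}_k$), $\tilde f_1^{\,j}\sigma(T)$, and then $\iota_{s-1+l}^{\,l'-l+1}\tilde f_1^{\,j}\sigma(T)$ case by case before a final $*\mathcal{BC}$, which is precisely the explicit verification you allude to in your last paragraph.
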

\begin{proof}
For $T=\begin{array}{|c|}\hline T_{1,1}\\ \hline T_{2,1} \\ \hline\end{array}\in \mathcal{B}(\Lambda_2)\subset B^{2,s}, s\geq 1,$\vspace{-.1in} we compute $\tilde{f}_0^jT=(\sigma\circ \tilde{f}_1 \circ\sigma)^j(T)=\sigma(\tilde{f}_1^j \sigma(T))$, where we have used the fact that $\sigma=\sigma^{-1}.$  In all cases except $T=\begin{array}{|c|}\hline 2\\\hline \overline{2} \\ \hline\end{array}$ we have $l=\min\{j|\iota_1^j(T)\neq 0\}=1$ (we have $\iota_{1}^0\left (\begin{array}{|c|}\hline 2\\ \hline \overline{2} \\ \hline\end{array}\:\right )=\varnothing$, so $l=0$ in that case).
For all $T\in B(\Lambda_2),$ the following may easily be verified (recall that $\begin{array}{|c|}\hline 1 \\ \hline \overline{1}\\ \hline \end{array}\notin B(\Lambda_2)$):
\begin{equation*}
T'=T^{*\mathcal{BC}}=\left \{\begin{array}{ll}\begin{array}{|c|}\hline T_{2,1} \\ \hline \overline{1}\\ \hline \end{array}\:, & 
			\text{if }T_{1,1}=1,\vspace{1mm} \\
		\begin{array}{|c|}\hline 1 \\ \hline T_{1,1}\\ \hline \end{array}\:, & \text{if }T_{2,1}=\overline{1},\vspace{1mm} \\
		T, &\text{otherwise.}
\end{array} \right .
\end{equation*}
Then, we have:
\begin{equation*}
\sigma(T)=\iota_{1}^{s+l-1}(T')=\left \{\begin{array}{ll}
\mathcal{N}_{s-1}\:\begin{array}{|c|}\hline T_{2,1} \\ \hline
 \overline{1} \\ \hline \end{array}\:, & \text{if }T_{1,1}=1\\ 
\begin{array}{|c|c|c|c|}\hline 1  \\ \hline
 T_{1,1} \\ \hline 
\end{array}\:\mathcal{N}_{s-1}\:, & \text{if }T_{2,1}=\overline{1}\\ 
\mathcal{N}_{s-1}, & \text{if }T=\begin{array}{|c|}\hline 2\\\hline \overline{2} \\ \hline\end{array}\\ 
\begin{array}{|c|c|}\hline 1 \\ 
\hline T_{2,1} \\ \hline \end{array}
\:\mathcal{N}_{s-2}\:\begin{array}{|c|c|}\hline  T_{1,1} \\ 
\hline \overline{1} \\ \hline \end{array}\:, & \text{otherwise, if }s>1\\ 
T, &\text{otherwise.}\end{array}\right .
\end{equation*}
where $\mathcal{N}_k=\begin{array}{|c|c|c|c|c|c|}\hline 1^{\lfloor k/2 \rfloor}& 2^{k\pmod{2}}& 2^{\lfloor k/2 \rfloor}\\ \hline
\overline{2}^{\lfloor k/2\rfloor}&\overline{2}^{k \pmod{2}}&\overline{1}^{\lfloor k/2 \rfloor}\\ \hline
\end{array}$ denotes the null configuration of size $k\geq 0$ (\cite{SS}).
We compute:
\begin{equation*}
\tilde{f}_1^j\sigma(T)=\left \{\begin{array}{ll}
\mathcal{N}_{s-1-j}\:\begin{array}{|c|c|}\hline  2^{j}& T_{2,1} \\ \hline
 	 \overline{1}^{j} & \overline{1} \\ \hline \end{array}\:, & \text{if }T_{1,1}=1,1\leq j \leq s-1\vspace{1mm} \\
\begin{array}{|c|}\hline 1 \\ \hline
 T_{1,1} \\ \hline 
\end{array} \:\mathcal{N}_{s-1-j}\:
\begin{array}{|c|}\hline 2^j \\ \hline
 \overline{1}^j \\ \hline 
\end{array} \:, & \text{if }T_{2,1}=\overline{1},1\leq j \leq s-1\vspace{1mm} \\
\begin{array}{|c|c|}\hline 2& 2^{s-1} \\ \hline
T_{1,1} & \overline{1}^{s-1} \\ \hline 
\end{array} \:, & \text{if }T_{2,1}=\overline{1},T_{1,1}\neq 2,j = s\vspace{1mm} \\
\begin{array}{|c|}\hline 2^s \\ \hline
\overline{1}^s \\ \hline 
\end{array} \:, & \text{if }T_{2,1}=\overline{1},T_{1,1}=\overline{2},j = s+1\\
\mathcal{N}_{s-1-j}\:\begin{array}{|c|}\hline  2^{j} \\ \hline
  \overline{1}^{j} \\ \hline 
\end{array}\:, & \text{if }T=\begin{array}{|c|}\hline 2\\\hline \overline{2} \\ \hline\end{array}\:, 1\leq j \leq s-1\\
\begin{array}{|c|}\hline 1 \\ 
\hline  T_{2,1} \\ \hline \end{array}\:\mathcal{N}_{s-2-j}\:
\begin{array}{|c|c|}\hline 2^{j}& T_{1,1} \\ 
\hline \overline{1}^{j} & \overline{1} \\ \hline \end{array}\:,& \text{otherwise, if }s>1, 1\leq j\leq s-2\vspace{1mm} \\
\begin{array}{|c|c|c|}\hline 2& 2^{s-2}& T_{1,1} \\ 
\hline T_{2,1}& \overline{1}^{s-2} & \overline{1} \\ \hline \end{array}\:,&\text{otherwise, if }s>1, 1\leq j=s-1\vspace{1mm}  \\
\begin{array}{|c|c|}\hline 2^{s-1}& T_{1,1} \\ 
\hline \overline{1}^{s-1} & \overline{1} \\ \hline \end{array}\:,& \text{otherwise, if }s>1, T_{2,1}=\overline{2},1\leq j=s\vspace{1mm} \\
\begin{array}{|c|}\hline T_{1,1} \\ 
\hline \overline{1} \\ \hline \end{array}\:,&\text{otherwise, if }s=j=1, T_{2,1}=\overline{2}\\ 
0,&\text{otherwise}. 
\end{array}\right .
\end{equation*}
Thus we have:
\begin{equation*}
\iota_{s-1+l}^{l'-l+1}\tilde{f}_1^j\sigma(T)=\left \{\begin{array}{ll}
\begin{array}{|c|c|}\hline  2^{j}& T_{2,1} \\ \hline
 	 \overline{1}^{j} & \overline{1} \\ \hline \end{array}\:, & \text{if }T_{1,1}=1,1\leq j \leq s-1\vspace{1mm} \\
\begin{array}{|c|c|}\hline 2& 2^{j-1} \\ \hline
 T_{1,1}&\overline{1}^{j-1} \\ \hline 
\end{array} \:, & \text{if }T_{2,1}=\overline{1},T_{1,1}\neq \overline{2},1\leq j \leq s\vspace{1mm} \\
\begin{array}{|c|c|}\hline  2^{j-1} \\ \hline
 \overline{1}^{j-1} \\ \hline 
\end{array} \:, & \text{if }T_{2,1}=\overline{1},T_{1,1}= \overline{2},1\leq j \leq s+1\vspace{1mm} \\
\begin{array}{|c|c|}\hline1& 2^j \\ \hline
 2&\overline{1}^j \\ \hline 
\end{array} \:, & \text{if }T_{1,1}=2,T_{2,1}=\overline{1},1\leq j \leq s-1\vspace{1mm} \\
\begin{array}{|c|c|}\hline 2& 2^{j} \\ \hline
 \overline{2}&\overline{1}^{j} \\ \hline 
\end{array}\:, & \text{if }T=\begin{array}{|c|}\hline 2\\\hline \overline{2} \\ \hline\end{array}\:, 1\leq j \leq s-1\vspace{1mm} \\
\begin{array}{|c|c|c|}\hline 2& 2^{j-2}& T_{1,1} \\ 
\hline T_{2,1}& \overline{1}^{j-2} & \overline{1} \\ \hline \end{array}\:,&\text{otherwise, if }s>1, 1\leq j\leq s-1\vspace{1mm} \\
\begin{array}{|c|c|}\hline 2^{s-1}& T_{1,1} \\ 
\hline \overline{1}^{s-1} & \overline{1} \\ \hline \end{array}\:,& \text{otherwise, if }s>1, T_{2,1}=\overline{2},1\leq j=s\vspace{1mm} \\
\begin{array}{|c|}\hline T_{1,1} \\ 
\hline \overline{1} \\ \hline \end{array}\:,&\text{otherwise, if }s=j=1, T_{2,1}=\overline{2}\vspace{1mm} \\
0,&\text{otherwise}, 
\end{array}\right .
\end{equation*}
where $l'=\min\{j|\iota_{s-1+l}^j\tilde{f}_1^j\sigma(T)\neq 0\}.$  Applying $*\mathcal{BC}$ on each of the tableaux gives the desired result.  A similar computation gives the result for $\tilde{e}_0.$

For the 0-string through $T=\begin{array}{|c|c|}\hline 1&\overline{2}\\ \hline  3&\overline{1}\\ \hline\end{array}\:$ we have $l=2,$ and compute:
\begin{equation*}
\iota_2^{s}(T^{*\mathcal{BC}})=\begin{array}{|c|}\hline 1\\ \hline  3\\ \hline\end{array}\:\mathcal{N}_{s-2}\:\begin{array}{|c|}\hline \overline{2}\\\hline \overline{1}\\ \hline\end{array}
\end{equation*}
\begin{equation*}
\tilde{f}_1^j\iota_2^{s}(T^{*\mathcal{BC}})=\begin{cases}
\begin{array}{|c|}\hline 1\\ \hline  3\\ \hline\end{array}\:\mathcal{N}_{s-j-2}\:
\begin{array}{|c|c|}\hline 2^j&\overline{2}\\\hline \overline{1}^j&\overline{1}\\ \hline\end{array}\:,&\text{if }1\leq j \leq s-2\vspace{1mm} \\
\begin{array}{|c|c|c|}\hline 2&2^{s-2}&\overline{2}\\ \hline 3& \overline{1}^{s-2}&\overline{1}\\ \hline\end{array}\:,&j = s-1\\
0, &\text{otherwise}
\end{cases}
\end{equation*}
\begin{equation*}
\iota_s^{l'}(\tilde{f}_1^j\iota_2^{s}(T^{*\mathcal{BC}}))=\begin{cases}
\begin{array}{|c|c|c|}\hline 2&2^{j-1}&\overline{2}\\ \hline 3& \overline{1}^{j-1}&\overline{1}\\ \hline\end{array}\:,
	&1\leq j \leq s-1\\
0, &\text{otherwise}
\end{cases}
\end{equation*}
(since $l'=j+1$ in all cases) \\
\begin{equation*}
(\iota_s^{l'}(\tilde{f}_1^j\iota_2^{s}(T^{*\mathcal{BC}})))^{*\mathcal{BC}}=\begin{cases}
\begin{array}{|c|c|c|}\hline 1^{j-1}&1&3\\ \hline 2^{j-1}& 3&\overline{3}\\ \hline\end{array}\:,
	&1\leq j \leq s-1\\
0, &\text{otherwise}
\end{cases}
\end{equation*}
A similar computation gives the result for $\tilde{e}_0^j$.
\end{proof}

Now we are ready to prove the following:
\begin{theorem}\label{RMatrix}
On the $D_n$ highest weight vectors in $B^{2,s}\otimes B^{2,1}, s\geq 1,$ we have:
\begin{equation}
\mathcal{R}(u_k\otimes b)=\left \{ \begin{array}{ll}
u_1\otimes u_s, &
\text{ if } b= u_1 \text{ and }k=s,
\\
\varnothing \otimes u_s, &
\text{ if } b= u_1 \text{ and }k=s-1,\vspace{1mm}
\\
u_1\otimes \begin{array}{|c|c|}\hline1^{k+1} & \overline{2}\\ \hline 2^{k+1}& \overline{1}\\ \hline\end{array}\:, 
&\text{ if } b= u_1 \text{ and }0\leq k\leq s-2,\vspace{1mm}
\\
u_1\otimes \begin{array}{|c|c|}\hline1^{s-1} & 1\\ \hline 2^{s-1}& 3\\ \hline\end{array}\:, 
&\text{ if } b= \begin{array}{|c|}\hline 1\\ \hline 3\\ \hline \end{array} \text{ and }k=s,\vspace{1mm}
\\
u_1\otimes \begin{array}{|c|c|c|}\hline1^{k-1} & 1& 3\\ \hline 2^{k-1}& 3 & \overline{3}\\ \hline\end{array}\:, 
&\text{ if } b= \begin{array}{|c|}\hline 1\\ \hline 3\\ \hline \end{array} \text{ and }1\leq k\leq s-1,\vspace{1mm}
\\
u_1\otimes \begin{array}{|c|c|}\hline1^{k-1} & 3\\ \hline 2^{k-1}& 4\\ \hline\end{array}\:, 
&\text{ if } b= \begin{array}{|c|}\hline 3\\ \hline 4\\ \hline \end{array} \text{ and }1\leq k\leq s,\vspace{1mm}
\\
u_1\otimes \begin{array}{|c|c|}\hline1^{k-1} & 3\\ \hline 2^{k-1}& \overline{3}\\ \hline\end{array}\:, 
&\text{ if } b= \begin{array}{|c|}\hline 3\\ \hline \overline{3}\\ \hline \end{array} \text{ and }1\leq k\leq s,\vspace{1mm}
\\
u_1\otimes \begin{array}{|c|c|}\hline1^{k-1} & 1\\ \hline 2^{k-1}& \overline{2}\\ \hline\end{array}\:, 
&\text{ if } b= \begin{array}{|c|}\hline 1\\ \hline \overline{2}\\ \hline \end{array} \text{ and }1\leq k \leq s,\vspace{1mm}
\\
u_1\otimes \begin{array}{|c|c|}\hline1^{k-2} & 1\\ \hline 2^{k-2}& 3\\ \hline\end{array}\:, 
&\text{ if } b= \begin{array}{|c|}\hline 3\\ \hline \overline{2}\\ \hline \end{array} \text{ and }2\leq k\leq s,\vspace{1mm}
\\
u_1\otimes u_{s-1}, &
\text{ if } b= \varnothing \text{ and }k=s,\vspace{1mm}
\\
\varnothing \otimes u_{k}, &
\text{ if } b= \varnothing \text{ and }0\leq k\leq s-1,\vspace{1mm}
\\
u_1 \otimes \begin{array}{|c|}\hline \overline{2}\\ \hline \overline{1}\\ \hline \end{array}\:,&  \text{ if } b= \begin{array}{|c|}\hline \overline{2}\\ \hline \overline{1}\\ \hline \end{array} \text{ and }k=1,\vspace{1mm}
\\
u_1 \otimes u_{k-2},&  \text{ if } b= \begin{array}{|c|}\hline \overline{2}\\ \hline \overline{1}\\ \hline \end{array} \text{ and }2 \leq k\leq s,\vspace{1mm}
\\
u_1\otimes \begin{array}{|c|c|}\hline1^{k-1} & 3\\ \hline 2^{k-1}& \overline{4}\\ \hline\end{array}\:, 
&\text{ if } b= \begin{array}{|c|}\hline 3\\ \hline \overline{4}\\ \hline \end{array} \text{ and }1\leq k\leq s,n=4.
\end{array}\right .
\end{equation}
\end{theorem}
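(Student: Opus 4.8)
The plan is to exploit that $\mathcal{R}$ is the unique $U_q(D_n^{(1)})$-crystal isomorphism, so it preserves $\text{wt}$, all $\varepsilon_i,\varphi_i$, and commutes with every $\tilde{e}_i,\tilde{f}_i$, $i\in I$. In particular it carries a $D_n$ highest weight vector to a $D_n$ highest weight vector of the same classical weight. First I would check that each left-hand vector $u_k\otimes b$ and each proposed right-hand image is annihilated by $\tilde{e}_1,\dots,\tilde{e}_n$: this is a direct application of the tensor product rule (Definition \ref{tensor}) together with the explicit single-letter action on $\mathcal{B}$, and it confirms both are $D_n$ highest weight vectors. Next I would verify that the classical weights of the proposed images agree with the weights recorded in the table for $B^{2,s}\otimes B^{2,1}$; since $\mathcal{R}$ preserves weight, this is a necessary condition and is a short computation of $\text{wt}$ on each side.

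Weight, however, does not by itself determine the image, because several of the listed highest weight vectors share a classical weight: for instance $u_k\otimes\varnothing$ and $u_k\otimes\left(\begin{smallmatrix}3\\ \overline{3}\end{smallmatrix}\right)$ both have weight $k\Lambda_2$, and the two families with second factor $\left(\begin{smallmatrix}1\\ 3\end{smallmatrix}\right)$ and $\left(\begin{smallmatrix}3\\ \overline{2}\end{smallmatrix}\right)$ realize the same weight $\Lambda_1+m\Lambda_2+\Lambda_3$ for shifted values of $k$. To separate the competing $D_n$ components I would use that $\mathcal{R}$ also preserves $\varepsilon_0,\varphi_0$ and commutes with $\tilde{e}_0,\tilde{f}_0$. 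Using the tensor product rule, $\varepsilon_0$ and $\varphi_0$ of each side reduce to the corresponding quantities on the individual tensor factors; for the single-column factors in $B^{2,1}$ and for the two-row right-hand tableaux these are read off the preceding lemma, which supplies the full $0$-strings and hence $\varepsilon_0=\max\{j:\tilde{e}_0^jT\neq0\}$ and $\varphi_0=\max\{j:\tilde{f}_0^jT\neq0\}$, while for the top vectors $u_k$ they follow from the explicit $\sigma$-description of $\tilde{e}_0,\tilde{f}_0$. Matching this $0$-data on the two sides singles out the correct component.

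Concretely, I would anchor the argument at the genuine highest weight vector $u_s\otimes u_1$, whose weight $(s+1)\Lambda_2$ is realized by a unique highest weight vector on each side, forcing $\mathcal{R}(u_s\otimes u_1)=u_1\otimes u_s$. For every other case I would argue by the same two-step mechanism: whenever the classical weight is realized by a unique highest weight vector of $B^{2,1}\otimes B^{2,s}$ the match is immediate, and otherwise the $0$-data computed above selects the claimed image among the finitely many candidates. Since $\mathcal{R}$ is unique, an image that is a $D_n$ highest weight vector of the correct weight and with the correct $\varepsilon_0,\varphi_0$ (and, where these still coincide, the correct first $\tilde{e}_0$- or $\tilde{f}_0$-step, again supplied by the lemma) must equal $\mathcal{R}(u_k\otimes b)$.

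The hard part is the disambiguation step and the combinatorial bookkeeping around it. Computing $\tilde{e}_0,\tilde{f}_0,\varepsilon_0,\varphi_0$ on the two-row tensor products is exactly what the preceding lemma is engineered for, but it must be applied across a large number of boundary regimes: the extremal values $k=s,\,s-1,\,1,\,0$, where the shape of the image changes (e.g.\ $u_1\otimes u_s$ versus $\varnothing\otimes u_s$, or $u_1\otimes u_{s-1}$ versus $\varnothing\otimes u_k$), and the low-rank coincidences at $n=4$ (the extra $\Lambda_4$ contributions and the special second factor $\left(\begin{smallmatrix}3\\ \overline{4}\end{smallmatrix}\right)$) and at $n=5$ (the extra $\Lambda_5$), each of which I would verify separately. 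The subtle point to guard against is that two same-weight components might also share $\varepsilon_0$ and $\varphi_0$; in such a case I would follow the $0$-string one or more further steps, using the lemma, until the components genuinely separate, thereby closing every case.
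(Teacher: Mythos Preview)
Your strategy is sound in principle but differs from the paper's and has a real gap in the disambiguation step. The paper does \emph{not} match $\varepsilon_0,\varphi_0$ values. Instead it propagates along the $0$-string from the single anchor $\mathcal{R}(u_s\otimes u_1)=u_1\otimes u_s$: applying $\tilde{e}_0^{\,j}$ to both sides (via the lemma and the tensor rule) produces a sequence of \emph{non}-highest-weight identities, and then Lecouvey's column insertion (Theorem~\ref{insertion}) is used to carry each such identity to the $D_n$ highest weight vector in its component. For example, from the $0$-string one gets $\mathcal{R}\bigl([\overline{2}^k;\overline{1}^k]\otimes[\overline{2};\overline{1}]\bigr)=u_1\otimes[\overline{2}^{k+2};\overline{1}^{k+2}]$, and insertion converts this into $\mathcal{R}(u_k\otimes u_1)=u_1\otimes\begin{array}{|c|c|}\hline 1^{k+1}&\overline{2}\\\hline 2^{k+1}&\overline{1}\\\hline\end{array}$. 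The remaining families are obtained the same way, starting from $\mathcal{R}(\varnothing\otimes u_1)$ and $\mathcal{R}(u_s\otimes\varnothing)$ and pushing with $\tilde{f}_0$. Column insertion, which your plan never invokes, is the engine that lets one work with the whole $0$-string rather than only with highest weight vectors.

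Your matching-of-invariants plan runs into two concrete problems. First, for a fixed classical weight one has $\varphi_0=\varepsilon_0+\langle h_0,\text{wt}\rangle$, so $(\varepsilon_0,\varphi_0)$ is a single invariant; at weight $(k{+}1)\Lambda_2$ there are (generically) four highest weight vectors on each side, and you have not shown this invariant separates them. Second, and more seriously, you assert that the needed $0$-data on the right-hand images ``are read off the preceding lemma''. That lemma only gives the $0$-strings through the \emph{single-column} elements of $B^{2,s}$ (and one special two-column element). Several of the claimed images do lie on those strings, but $\begin{array}{|c|c|}\hline 1^{k+1}&\overline{2}\\\hline 2^{k+1}&\overline{1}\\\hline\end{array}\in B((k{+}2)\Lambda_2)\subset B^{2,s}$ does not, so its $\varepsilon_0,\varphi_0$ cannot be read off the lemma; you would have to run the full $\sigma$-algorithm on it. You would also need an explicit list of the $D_n$ highest weight vectors of $B^{2,1}\otimes B^{2,s}$ (the paper only tabulates the other order), in order to certify that your candidate is the \emph{only} one with the computed $0$-data. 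None of this is insurmountable, but it is substantially more work than your write-up acknowledges, and once you start following $0$-strings to separate components you are effectively redoing the paper's argument without the organizing role of column insertion.
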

\begin{proof}
Since $\mathcal{R}$ is a $D_n^{(1)}$ crystal isomorphism, we have $\text{wt}(b\otimes b')=\text{wt}(\mathcal{R}(b\otimes b'))$.  Since $u_s\otimes u_1$ is the unique highest weight vector of $B^{2,s}\otimes B^{2,1}$ such that $\text{wt}(b\otimes b')=(s+1)\Lambda_2$, we must have $\mathcal{R}(u_s\otimes u_1)=u_1\otimes u_s.$  Therefore $\mathcal{R}(\tilde{e}_0^j(u_s \otimes u_1))=\tilde{e}_0^j\mathcal{R}(u_s\otimes u_1)=\tilde{e}_0^j(u_1\otimes u_s),j\geq 0.$  By the previous Lemma, and Definition \ref{tensor}:
\begin{equation*}
\tilde{e}_0^j(u_s \otimes u_1)=\left \{ \begin{array}{ll}
			u_s\otimes \varnothing,& \text{if } j=1\\ 
			u_{s-j+2} \otimes \begin{array}{|c|}\hline \overline{2} \\ \hline \overline{1}\\ \hline \end{array}\:,& \text{if }2\leq j \leq s+2\vspace{1mm} \\
\begin{array}{|c|}\hline \overline{2}^{j-s-2} \\ \hline \overline{1}^{j-s-2}\\ \hline \end{array} \otimes \begin{array}{|c|}\hline \overline{2} \\ \hline \overline{1}\\ \hline \end{array}\:,& \text{if }s+3\leq j \leq 2s+2
\end{array}\right .
\end{equation*}
and,
\begin{equation*}
\tilde{e}_0^j(u_1 \otimes u_s)=\left \{
\begin{array}{ll}
u_1\otimes u_{s-j},& \text{if } 1\leq j\leq s\vspace{1mm} \\
u_1 \otimes \begin{array}{|c|}\hline \overline{2}^{j-s} \\ \hline\overline{1}^{j-s}\\ \hline \end{array}\:, &\text{if }s+1\leq j \leq 2s 
\vspace{1mm} \\
\varnothing \otimes \begin{array}{|c|}\hline \overline{2}^{s} \\ \hline\overline{1}^{s}\\ \hline \end{array}\:, &\text{if }j=2s+1
\vspace{1mm} \\
\begin{array}{|c|}\hline \overline{2} \\ \hline\overline{1} \\ \hline \end{array} \otimes \begin{array}{|c|}\hline \overline{2}^{s} \\ \hline\overline{1}^{s}\\ \hline \end{array}\:, &\text{if }j=2s+2
\end{array}\right .
\end{equation*}
Setting $k=s-j+1$, we see:
\begin{eqnarray*}
\mathcal{R}(u_s\otimes \varnothing)=u_1\otimes u_{s-1},
\end{eqnarray*}
and, setting $k=s-j+2$ gives:
\begin{equation*}
\mathcal{R}\left (u_{k} \otimes \begin{array}{|c|}\hline \overline{2} \\ \hline \overline{1}\\ \hline \end{array}\:\right )=\left \{\begin{array}{ll}
	u_1\otimes u_{k-2},& \text{if }2\leq k \leq s\\ \\
	u_1\otimes \begin{array}{|c|}\hline \overline{2} \\ \hline \overline{1}\\ \hline \end{array}\;, &\text{if }k=1
\end{array}\right .
\end{equation*}
Also, we have 
\begin{equation*}\mathcal{R}\left (\begin{array}{|c|}\hline \overline{2}^{k} \\ \hline \overline{1}^{k}\\ \hline \end{array} \otimes \begin{array}{|c|}\hline \overline{2} \\ \hline \overline{1}\\ \hline \end{array}\: \right )=
\left \{\begin{array}{ll}u_1 \otimes \begin{array}{|c|}\hline \overline{2}^k \\ \hline \overline{1}^k\\ \hline \end{array}\:,
&0\leq k \leq s-2\vspace{1mm} \\
\varnothing \otimes \begin{array}{|c|}\hline \overline{2}^s \\ \hline \overline{1}^{s}\\ \hline \end{array}\:,& k =s-1\vspace{1mm} \\
\begin{array}{|c|}\hline\overline{2} \\ \hline \overline{1} \\\hline \end{array}\otimes \begin{array}{|c|}\hline \overline{2}^s \\ \hline \overline{1}^s\\ \hline \end{array}\:,
& k =s
\end{array}
\right .
\end{equation*}
however, these are not $D_n$ highest weight vectors.  By using column insertion (Theorem \ref{insertion}) we find the corresponding $D_n$ highest weight vectors, and obtain: $\mathcal{R}\left (u_k \otimes u_1\: \right )=u_1 \otimes \begin{array}{|c|c|}\hline 1^{k+1}&\overline{2} \\ \hline 2^{k+1}&\overline{1}\\ \hline \end{array}\:,0\leq k \leq s-2$ (and nothing else new).

Now, we consider the case $\varnothing \otimes T$, where $T=\begin{array}{|c|}\hline T_{1,1} \\ \hline T_{2,1} \\ \hline \end{array}\:=\begin{array}{|c|}\hline 1 \\ \hline 3 \\ \hline \end{array}\:,\begin{array}{|c|}\hline 3 \\ \hline 4 \\ \hline \end{array}\:,\begin{array}{|c|}\hline 3 \\ \hline \overline{3} \\ \hline \end{array}\:,\begin{array}{|c|}\hline 1 \\ \hline \overline{2} \\ \hline \end{array}
\:,\begin{array}{|c|}\hline 3 \\ \hline \overline{2} \\ \hline \end{array}\:,$ or $\begin{array}{|c|} \hline 3\\ \hline \overline{4} \\ \hline \end{array}\:,$ if $n=4$.  We see that the corresponding highest weight vector is $\varnothing \otimes u_1$.  We have seen that $\mathcal{R}(\varnothing \otimes u_1)=u_1 \otimes \begin{array}{|c|c|}\hline 1&\overline{2} \\ \hline 2&\overline{1}\\ \hline \end{array}\:$, which gives (using inverse column insertion on the tableau $T$): $\mathcal{R}(\varnothing\otimes T)=u_1\otimes \begin{array}{|c|c|}\hline T_{1,1}&\overline{2} \\ \hline T_{2,1}&\overline{1}\\ \hline \end{array}\:$.  Acting on both sides by $\tilde{f}_0$ a sufficient number of times using the previous Lemma, and commuting with $\mathcal{R}$ gives:
\begin{equation*}
\mathcal{R} (u_k\otimes T)=\left \{\begin{array}{ll}
u_1\otimes \begin{array}{|c|c|}\hline1^{s-1} & 1\\ \hline 2^{s-1}& 3\\ \hline\end{array}\:, 
&\text{ if } T= \begin{array}{|c|}\hline 1\\ \hline 3\\ \hline \end{array} \text{ and }k=s,\vspace{1mm}
\\
u_1\otimes \begin{array}{|c|c|c|}\hline1^{k-1} & 1& 3\\ \hline 2^{k-1}& 3 & \overline{3}\\ \hline\end{array}\:, 
&\text{ if } T= \begin{array}{|c|}\hline 1\\ \hline 3\\ \hline \end{array} \text{ and }1\leq k\leq s-1,\vspace{1mm}
\\
u_1\otimes \begin{array}{|c|c|}\hline1^{k-1} & 3\\ \hline 2^{k-1}& 4\\ \hline\end{array}\:, 
&\text{ if } T= \begin{array}{|c|}\hline 3\\ \hline 4\\ \hline \end{array} \text{ and }1\leq k\leq s,\vspace{1mm}
\\
u_1\otimes \begin{array}{|c|c|}\hline1^{k-1} & 3\\ \hline 2^{k-1}& \overline{3}\\ \hline\end{array}\:, 
&\text{ if } T= \begin{array}{|c|}\hline 3\\ \hline \overline{3}\\ \hline \end{array} \text{ and }1\leq k\leq s,\vspace{1mm}
\\
u_1\otimes \begin{array}{|c|c|}\hline1^{k-1} & 1\\ \hline 2^{k-1}& \overline{2}\\ \hline\end{array}\:, 
&\text{ if } T= \begin{array}{|c|}\hline 1\\ \hline \overline{2}\\ \hline \end{array} \text{ and }1\leq k \leq s,\vspace{1mm}
\\
u_1\otimes \begin{array}{|c|c|}\hline1^{k-2} & 1\\ \hline 2^{k-2}& 3\\ \hline\end{array}\:, 
&\text{ if } T= \begin{array}{|c|}\hline 3\\ \hline \overline{2}\\ \hline \end{array} \text{ and }2\leq k\leq s,\vspace{1mm} \\
u_1\otimes \begin{array}{|c|c|}\hline1^{k-1} & 3\\ \hline 2^{k-1}& \overline{4}\\ \hline\end{array}\:, 
&\text{ if } T= \begin{array}{|c|}\hline 3\\ \hline \overline{4}\\ \hline \end{array} \text{ and }1\leq k\leq s,n=4.
\end{array}
\right .
\end{equation*}
Finally, consider $\begin{array}{|c|}\hline \overline{2}^s\\ \hline \overline{1}^s \\ \hline \end{array} \otimes \varnothing$.  The corresponding highest weight vector is $u_s \otimes \varnothing$.  We have seen that $\mathcal{R}(u_s \otimes \varnothing)=u_1\otimes u_{s-1},$ which gives $\mathcal{R}\left (\begin{array}{|c|}\hline \overline{2}^s\\ \hline \overline{1}^s \\ \hline \end{array} \otimes \varnothing\right )=\begin{array}{|c|}\hline \overline{2}\\ \hline \overline{1} \\ \hline \end{array}\otimes \begin{array}{|c|}\hline \overline{2}^{s-1}\\ \hline \overline{1}^{s-1} \\ \hline \end{array}\:$. Acting on both sides by $\tilde{f}_0$ sufficiently many times, and commuting with $\mathcal{R}$ gives:
\begin{equation*}
\mathcal{R}(u_k\otimes T)=\left \{\begin{array}{ll}
\varnothing \otimes u_k, &\text{if }T=\varnothing, 0\leq k\leq s-1\\
\varnothing \otimes u_s, &\text{if }T=u_1, k=s-1.
\end{array}\right .
\end{equation*}
\end{proof}
From the computation in the proof of Theorem \ref{RMatrix} one may easily deduce the following:
\begin{corollary}\label{energy}
For the highest weight vector $u_k\otimes T\in B^{2,s}\otimes B^{2,1},0\leq k\leq s$,
\begin{equation}
H(u_k\otimes T)=\left \{ \begin{array}{ll}
0, & \text{if }T=\begin{array}{|c|}\hline 1 \\ \hline 2\\ \hline \end{array}\:, k=s\vspace{1mm}\\
-1,& \text{if }T=\begin{array}{|c|}\hline 1 \\ \hline 2\\ \hline \end{array}\:, k = s-1,\vspace{1mm}
		\\
		& T=\begin{array}{|c|}\hline 1 \\ \hline 3\\ \hline \end{array}\:, k=s, \text{ or},\\
		& T=\varnothing, k=s,\\
		-2, & \text{otherwise.}
\end{array}
\right .
\end{equation}
\end{corollary}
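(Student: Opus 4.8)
\emph{Proof sketch.} The strategy is to read off $H$ by propagating it along exactly the $0$-strings used to prove Theorem~\ref{RMatrix}, using two structural facts. By the defining property of $H$ with $i\neq 0$ one has $H(\tilde e_i(b\otimes b'))=H(b\otimes b')$, so $H$ is constant on every $D_n$-component; it therefore suffices to evaluate $H$ at one convenient element of the component of each highest weight vector. Normalizing $H(u_s\otimes u_1)=0$ (legitimate since $H$ is unique up to an additive constant and $u_s\otimes u_1$ is the unique global highest weight vector, of weight $(s+1)\Lambda_2$), the Proposition dictates the change under $\tilde e_0$: writing $\mathcal R(b\otimes b')=\tilde b'\otimes\tilde b$, the quantity $H(\tilde e_0(b\otimes b'))-H(b\otimes b')$ equals $+1$ if $\tilde e_0$ acts on the left factor of both $b\otimes b'$ and $\tilde b'\otimes\tilde b$, equals $-1$ if it acts on the right factor of both, and equals $0$ otherwise. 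By the tensor product rule the factor acted on is just the factor that changes, which is visible in the explicit strings. The corresponding rule for $\tilde f_0$, obtained from the adjunction $\tilde f_0c=c'\Leftrightarrow c=\tilde e_0c'$ together with $\mathcal R\tilde f_0=\tilde f_0\mathcal R$, is: $H$ increases by $1$ when $\tilde f_0$ changes the right factor of both source and image, decreases by $1$ when it changes the left factor of both, and is constant otherwise.

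I would first dispose of the families lying on the principal $0$-string through $u_s\otimes u_1$. Both this string $\tilde e_0^j(u_s\otimes u_1)$ and its image $\tilde e_0^j(u_1\otimes u_s)$ are tabulated in the proof of Theorem~\ref{RMatrix}. Comparing consecutive terms, $\tilde e_0$ changes the right factor of both source and image at the two initial steps $j=0,1$, changes the left factor of the source but the right factor of the image for $2\le j\le 2s-1$, and changes the left factor of both at the two final steps $j=2s,2s+1$. The sign rule then yields the unimodal profile $0,-1,-2,\dots,-2,-1,0$ along the string. Reading off the highest weight vectors that occur---directly for $u_s\otimes\varnothing$ and the $\left[\begin{smallmatrix}\overline 2\\ \overline 1\end{smallmatrix}\right]$-family, and through the column-insertion identification already made in the proof for the $u_1$-family---gives $H(u_s\otimes\varnothing)=-1$, $H(u_k\otimes\left[\begin{smallmatrix}\overline 2\\ \overline 1\end{smallmatrix}\right])=-2$ for $1\le k\le s$, $H(u_k\otimes u_1)=-2$ for $0\le k\le s-2$, $H(u_{s-1}\otimes u_1)=-1$, and $H(u_s\otimes u_1)=0$, accounting for all values on these families.

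For the remaining columns $T$ I would use the secondary strings of the proof. Each $\varnothing\otimes T$ lies in the $D_n$-component of $\varnothing\otimes u_1$ by inverse column insertion, so $H(\varnothing\otimes T)=H(\varnothing\otimes u_1)=-2$ by the first step. The proof reaches $u_k\otimes T$ by applying $\tilde f_0$ and commuting with $\mathcal R$; on the image side the sequence $u_1\otimes S_k$ is precisely the $B^{2,s}$-side $0$-string of the Lemma. For every column except $\left[\begin{smallmatrix}1\\ 3\end{smallmatrix}\right]$ the factor changed on the image side never coincides with the factor changed on the source side, so $H$ stays at $-2$ all the way up, giving $H=-2$ on the $\left[\begin{smallmatrix}3\\ 4\end{smallmatrix}\right]$, $\left[\begin{smallmatrix}3\\ \overline 3\end{smallmatrix}\right]$, $\left[\begin{smallmatrix}1\\ \overline 2\end{smallmatrix}\right]$, $\left[\begin{smallmatrix}3\\ \overline 2\end{smallmatrix}\right]$ families (and the $\left[\begin{smallmatrix}3\\ \overline 4\end{smallmatrix}\right]$ family for $n=4$). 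For $T=\left[\begin{smallmatrix}1\\ 3\end{smallmatrix}\right]$ the image carries one extra column, so $S_k$ already saturates the top component $B(s\Lambda_2)$ at $k=s-1$; at the final step $k=s-1\to s$ the move is then within $B(s\Lambda_2)$, forcing $\tilde f_0$ to change the right factor of both source and image and raising $H$ by $1$, so that $H(u_s\otimes\left[\begin{smallmatrix}1\\ 3\end{smallmatrix}\right])=-1$ while $H(u_k\otimes\left[\begin{smallmatrix}1\\ 3\end{smallmatrix}\right])=-2$ for $1\le k\le s-1$. The family $u_k\otimes\varnothing$, $0\le k\le s-1$, is handled in the same way from $u_s\otimes\varnothing$ using the string through $\left[\begin{smallmatrix}\overline 2^{\,s}\\ \overline 1^{\,s}\end{smallmatrix}\right]\otimes\varnothing$, and stays at $-2$. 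Collecting these values proves the corollary.

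The main obstacle is the per-step determination, carried out simultaneously for an element and for its $\mathcal R$-image, of which tensor factor $\tilde e_0$ (or $\tilde f_0$) acts on, since only a coincidence of the two---both left, or both right---moves $H$. This is mechanical given the explicit strings of the Lemma and of the proof of Theorem~\ref{RMatrix}, but it demands care at the handful of transition points---$j=1\to2$ and $j=2s-1\to2s$ on the principal string, and the saturation step of the $\left[\begin{smallmatrix}1\\ 3\end{smallmatrix}\right]$-string---where the left/right pattern flips and $H$ moves between $0$, $-1$ and $-2$.
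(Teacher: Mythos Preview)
Your approach is exactly the one the paper has in mind: the paper's entire proof is the single sentence ``from the computation in the proof of Theorem~\ref{RMatrix} one may easily deduce the following,'' and you have correctly spelled out what that deduction is---normalize at $u_s\otimes u_1$, use constancy of $H$ on $D_n$-components, and track the left/right coincidences along the explicit $0$-strings of the Lemma and of the proof of Theorem~\ref{RMatrix}. The principal string and most of the secondary strings are handled correctly.

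There is, however, a genuine gap in the $T=\left[\begin{smallmatrix}1\\3\end{smallmatrix}\right]$, $k=s$ case. The ``final step $k=s-1\to s$'' that you invoke does not exist as a single $\tilde f_0$-move: in $B^{2,s}$ one has $\varphi_0(u_{s-1})=1$, while in $B^{2,1}$ one has $\varepsilon_0\!\left(\left[\begin{smallmatrix}1\\3\end{smallmatrix}\right]\right)=1$ and $\varphi_0\!\left(\left[\begin{smallmatrix}1\\3\end{smallmatrix}\right]\right)=0$, so the tensor rule sends $\tilde f_0$ to the right factor and $\tilde f_0\!\left(u_{s-1}\otimes\left[\begin{smallmatrix}1\\3\end{smallmatrix}\right]\right)=0$. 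Thus $u_s\otimes\left[\begin{smallmatrix}1\\3\end{smallmatrix}\right]$ lies on a \emph{different} $0$-string from the one you are climbing. (Your stated justification is also internally inconsistent: a move $u_{s-1}\to u_s$ on the source side would be a \emph{left}-factor change, not right, so even if the step existed your sign rule would not give $+1$.)

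The easy repair is to reach $u_s\otimes\left[\begin{smallmatrix}1\\3\end{smallmatrix}\right]$ from a family you have already evaluated. Since $\varphi_0(u_s)=0<1=\varepsilon_0\!\left(\left[\begin{smallmatrix}1\\3\end{smallmatrix}\right]\right)$, one has $\tilde e_0\!\left(u_s\otimes\left[\begin{smallmatrix}1\\3\end{smallmatrix}\right]\right)=u_s\otimes\left[\begin{smallmatrix}3\\\overline 2\end{smallmatrix}\right]$ (right factor), and by Theorem~\ref{RMatrix} the images under $\mathcal R$ are $u_1\otimes\left[\begin{smallmatrix}1^{s}\\2^{s-1}3\end{smallmatrix}\right]$ and $u_1\otimes\left[\begin{smallmatrix}1^{s-1}\\2^{s-2}3\end{smallmatrix}\right]$ respectively, also a right-factor change. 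Hence $H\!\left(u_s\otimes\left[\begin{smallmatrix}3\\\overline 2\end{smallmatrix}\right]\right)=H\!\left(u_s\otimes\left[\begin{smallmatrix}1\\3\end{smallmatrix}\right]\right)-1$, giving $H\!\left(u_s\otimes\left[\begin{smallmatrix}1\\3\end{smallmatrix}\right]\right)=-2+1=-1$ once the $\left[\begin{smallmatrix}3\\\overline 2\end{smallmatrix}\right]$-family has been shown to take the value $-2$.
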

Similarly, we have:
\begin{theorem}\label{RHighest2}
On the $D_n$ highest weight vectors in $B^{1,1}\otimes B^{2,1}$,
\begin{equation}
\mathcal{R}\left (\:\begin{array}{|c|}\hline 1 \\ \hline \end{array}\otimes T\right)=\begin{cases}
			\begin{array}{|c|}\hline 1 \\ \hline 2\\ \hline \end{array}\: \otimes \:
			\begin{array}{|c|}\hline 1 \\ \hline \end{array}\:, &\text{if }T=\begin{array}{|c|}\hline 1 \\ \hline 2\\ \hline
			\end{array}\:,\vspace{1mm}
			\\
			\begin{array}{|c|}\hline 1 \\ \hline 2\\ \hline \end{array}\: \otimes \:
			\begin{array}{|c|}\hline 3 \\ \hline \end{array}\:, &\text{if }T=\begin{array}{|c|}\hline 2 \\ \hline 3\\ \hline
			\end{array}\:,\vspace{1mm}
			\\
			\begin{array}{|c|}\hline 1 \\ \hline 2\\ \hline \end{array}\: \otimes \:
			\begin{array}{|c|}\hline \overline{2} \\ \hline \end{array}\:, &\text{if }T=\varnothing,\vspace{1mm}
			\\
			\varnothing\: \otimes \:
			\begin{array}{|c|}\hline 1 \\ \hline \end{array}\:, &\text{if }T=\begin{array}{|c|}\hline 2 \\ \hline \overline{2}\\ 
			\hline
			\end{array}\:.
		\end{cases}
\end{equation}
\end{theorem}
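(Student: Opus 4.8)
The plan is to use that $\mathcal{R}$ is \emph{the} (unique) $U_q'(D_n^{(1)})$-crystal isomorphism $B^{1,1}\otimes B^{2,1}\to B^{2,1}\otimes B^{1,1}$; in particular it preserves $\text{wt}$, $\varepsilon_i$ and $\varphi_i$ for every $i\in I$, and it carries $D_n$-highest weight vectors to $D_n$-highest weight vectors. Reading off the two tables of highest weight vectors, $\mathcal{R}$ must therefore restrict to a weight-preserving bijection between the four highest weight vectors of $B^{1,1}\otimes B^{2,1}$ and the four of $B^{2,1}\otimes B^{1,1}$, and it suffices to pin down this bijection.

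First I would dispatch the two cases whose classical weight is unique. The weight $\Lambda_1+\Lambda_2$ occurs only for $\begin{array}{|c|}\hline 1\\ \hline \end{array}\otimes \begin{array}{|c|}\hline 1\\ \hline 2\\ \hline \end{array}$ in the source and only for $\begin{array}{|c|}\hline 1\\ \hline 2\\ \hline \end{array}\otimes \begin{array}{|c|}\hline 1\\ \hline \end{array}$ in the target, while $\Lambda_3+\delta_{n,4}\Lambda_4$ occurs only for $\begin{array}{|c|}\hline 1\\ \hline \end{array}\otimes \begin{array}{|c|}\hline 2\\ \hline 3\\ \hline \end{array}$ and $\begin{array}{|c|}\hline 1\\ \hline 2\\ \hline \end{array}\otimes \begin{array}{|c|}\hline 3\\ \hline \end{array}$. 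Weight preservation then forces the first two cases immediately.

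The remaining source vectors $\begin{array}{|c|}\hline 1\\ \hline \end{array}\otimes\varnothing$ and $\begin{array}{|c|}\hline 1\\ \hline \end{array}\otimes\begin{array}{|c|}\hline 2\\ \hline\overline{2}\\ \hline \end{array}$ both have classical weight $\Lambda_1$ and, after dropping the $0$-arrows, generate isomorphic copies of $B(\Lambda_1)$, so the classical data cannot separate them; the same is true of the two target vectors $\varnothing\otimes\begin{array}{|c|}\hline 1\\ \hline \end{array}$ and $\begin{array}{|c|}\hline 1\\ \hline 2\\ \hline \end{array}\otimes\begin{array}{|c|}\hline\overline{2}\\ \hline \end{array}$. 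I would break this tie using the affine datum $\varepsilon_0$, which $\mathcal{R}$ also preserves. Using the $\sigma$-description of the $0$-action on $B^{2,1}$ I would compute $\sigma(\varnothing)=\begin{array}{|c|}\hline 2\\ \hline\overline{2}\\ \hline \end{array}$, $\sigma\!\left(\begin{array}{|c|}\hline 2\\ \hline\overline{2}\\ \hline \end{array}\right)=\varnothing$ and $\sigma\!\left(\begin{array}{|c|}\hline 1\\ \hline 2\\ \hline \end{array}\right)=\begin{array}{|c|}\hline 2\\ \hline\overline{1}\\ \hline \end{array}$, giving $\varepsilon_0(\varnothing)=1$, $\varepsilon_0\!\left(\begin{array}{|c|}\hline 2\\ \hline\overline{2}\\ \hline \end{array}\right)=0$ and $\varepsilon_0\!\left(\begin{array}{|c|}\hline 1\\ \hline 2\\ \hline \end{array}\right)=2$, together with $\varepsilon_0\!\left(\begin{array}{|c|}\hline 1\\ \hline \end{array}\right)=1$ and $\varepsilon_0\!\left(\begin{array}{|c|}\hline\overline{2}\\ \hline \end{array}\right)=0$ in $B^{1,1}$. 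Feeding these into the tensor-product rule for $\varepsilon_0$ yields $\varepsilon_0\!\left(\begin{array}{|c|}\hline 1\\ \hline \end{array}\otimes\varnothing\right)=2$ and $\varepsilon_0\!\left(\begin{array}{|c|}\hline 1\\ \hline \end{array}\otimes\begin{array}{|c|}\hline 2\\ \hline\overline{2}\\ \hline \end{array}\right)=1$ on the source, and $\varepsilon_0\!\left(\begin{array}{|c|}\hline 1\\ \hline 2\\ \hline \end{array}\otimes\begin{array}{|c|}\hline\overline{2}\\ \hline \end{array}\right)=2$, $\varepsilon_0\!\left(\varnothing\otimes\begin{array}{|c|}\hline 1\\ \hline \end{array}\right)=1$ on the target. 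Matching $\varepsilon_0$ then forces the crossed assignment $\begin{array}{|c|}\hline 1\\ \hline \end{array}\otimes\varnothing\mapsto\begin{array}{|c|}\hline 1\\ \hline 2\\ \hline \end{array}\otimes\begin{array}{|c|}\hline\overline{2}\\ \hline \end{array}$ and $\begin{array}{|c|}\hline 1\\ \hline \end{array}\otimes\begin{array}{|c|}\hline 2\\ \hline\overline{2}\\ \hline \end{array}\mapsto\varnothing\otimes\begin{array}{|c|}\hline 1\\ \hline \end{array}$, which are exactly the last two cases.

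The main obstacle I expect is the computation of $\varepsilon_0$ on the $B^{2,1}$ factors, since it runs through $*\mathcal{BC}$-duality and the embeddings $\iota_j^k$. In particular the key and slightly counterintuitive point is $\varepsilon_0(\varnothing)=1$, equivalently $\sigma(\varnothing)=\begin{array}{|c|}\hline 2\\ \hline\overline{2}\\ \hline \end{array}$: this is precisely what makes the matching crossed, sending $\begin{array}{|c|}\hline 1\\ \hline \end{array}\otimes\varnothing$ to $\begin{array}{|c|}\hline 1\\ \hline 2\\ \hline \end{array}\otimes\begin{array}{|c|}\hline\overline{2}\\ \hline \end{array}$ rather than to the naive image $\varnothing\otimes\begin{array}{|c|}\hline 1\\ \hline \end{array}$.
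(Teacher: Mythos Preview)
Your proof is correct. The first two cases and the overall strategy---match highest weight vectors by weight, then break the $\Lambda_1$ tie using the affine $0$-action---agree with the paper. The execution for the tie-breaking step differs slightly: the paper applies $\tilde{f}_0$ to $\begin{array}{|c|}\hline 1\\ \hline\end{array}\otimes\varnothing$, lands on the already-determined element $\begin{array}{|c|}\hline 1\\ \hline\end{array}\otimes\begin{array}{|c|}\hline 1\\ \hline 2\\ \hline\end{array}$, and then applies $\tilde{e}_0$ on the other side, deducing the fourth case by elimination; you instead compute the numerical invariant $\varepsilon_0$ on all four candidates and match. Your route avoids computing any explicit $\tilde e_0$ or $\tilde f_0$ on tensor products, at the cost of a few more $\sigma$/$\varepsilon_0$ evaluations; the paper's route reuses one case it already has. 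Both arguments are equally short and rest on the same computation $\sigma(\varnothing)=\begin{array}{|c|}\hline 2\\ \hline\overline{2}\\ \hline\end{array}$ in $B^{2,1}$ that you correctly flag as the crux.
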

\begin{proof}
The cases where $T=\begin{array}{|c|}\hline 1 \\ \hline 2\\ \hline\end{array}$  or $\begin{array}{|c|}\hline 2 \\ \hline 3\\ \hline\end{array}$ are immediate because there is no outer multiplicity in these cases.  Consider $\mathcal{R}\left (\:\begin{array}{|c|}\hline 1 \\ \hline \end{array}\otimes \varnothing \right).$  We have:
\begin{eqnarray*}
\tilde{f}_0\left (\:\begin{array}{|c|}\hline 1 \\ \hline \end{array}\:\otimes \varnothing \right)
&=&\begin{array}{|c|}\hline 1 \\ \hline \end{array}\:\otimes \:\begin{array}{|c|}\hline 1 \\ \hline 2\\ \hline \end{array}\\
\tilde{f}_0\mathcal{R}\left (\:\begin{array}{|c|}\hline 1 \\ \hline \end{array}\:\otimes \varnothing \right)
&=&\begin{array}{|c|}\hline 1 \\ \hline 2\\ \hline \end{array} \:\otimes \: \begin{array}{|c|}\hline 1 \\ \hline \end{array}\\
\mathcal{R}\left (\:\begin{array}{|c|}\hline 1 \\ \hline \end{array}\:\otimes \varnothing \right)
&=&\tilde{e}_0\left (\:\begin{array}{|c|}\hline 1 \\ \hline 2\\ \hline \end{array} \:\otimes \: \begin{array}{|c|}\hline 1 \\ \hline \end{array}\:\right )\\
&=&\begin{array}{|c|}\hline 1 \\ \hline 2\\ \hline \end{array} \:\otimes \: \begin{array}{|c|}\hline \overline{2} \\ \hline \end{array}\\
\end{eqnarray*}
Finally, the case where $T=\begin{array}{|c|}\hline 2 \\ \hline \overline{2}\\ \hline \end{array}$ is immediate, since we have ruled out all other possibilities.
\end{proof}
\subsection{Combinatorial $R$ matrix for $B^{2,s}\otimes B^{2,1}$}
Recall (Theorem \ref{insertion}) that there is a bijection between the set $B(\Lambda_1)^{\otimes l}$ and the set of all pairs $(P,Q)$  where $Q=(Q_0=(\varnothing,0),Q_1,Q_2,\dots, Q_l)$ is an oscillating tableau, and $P$ is a Kashiwara-Nakashima tableau of shape $O_l$.  In fact (\cite{L}), the oscillating tableau $Q$ enables us to determine the highest-weight vector in $B(\Lambda_1)^{\otimes l}$ corresponding to the pair $(P,Q)$: namely it is the vector $b_1\otimes b_2 \otimes \cdots \otimes b_l$, where
\begin{equation*}
b_k=\begin{cases} i, &\text{if }1\leq i < n, O_k \text{ has one more box in the $i$th row than }O_{k-1}\\
			\overline{i}, & \text{if }1\leq i < n, O_k\text{ has one fewer box in the $i$th row than }O_{k-1}\\
			n, & \text{if }\varepsilon_k=+,0 \text{(resp. $-$) and $O_k$ has 1 more (resp. fewer) box }\\
			&\text{in the $n$th row than }O_{k-1}\\
			\overline{n}, &\text{otherwise}.
\end{cases}
\end{equation*}
The following procedure then allows us to compute the combinatorial $R$-matrix for any element $T\otimes T’\in B^{2,s}\otimes B^{2,1}$.
\begin{enumerate}
\item Insert the letters of $T’$ from top to bottom into $T$, keeping track of the rows in which boxes are being added or removed and the positions of any $n$ or $\overline{n}$ in a column of height $n$.  If there $T’=\varnothing$ then do nothing.  Call the resulting tableau $P$.
\item Find the highest weight vector $b_1\otimes b_2\otimes \cdots \otimes b_l$ from Step 1.  The result gives the reading of a unique highest weight vector $u_k\otimes T''\in B^{2,s}\otimes B^{2,1}.$
\item Compute $\mathcal{R}(u_k\otimes T''),$ and interpret the result as an element of $B(\Lambda_1)^{\otimes l’}$, and use it to deterine the oscillating tableau $Q'.$
\item For the pair $(P,Q')$, reverse Lecouvey's algorithm to get a sequence of letters.  The resulting sequence of letters gives the reading of a unique pair of tableaux $\widetilde{T'}\otimes \widetilde{T}\in B^{2,1}\otimes B^{2,s}$ which is $\mathcal{R}(T\otimes T').$
\end{enumerate}

\emph{Example:} We compute $\mathcal{R}\left (\:\begin{array}{|c|}\hline \overline{4}\\ \hline 4\\ \hline \end{array}\otimes \begin{array}{|c|}\hline 1\\ \hline 2\\ \hline \end{array}\:\right )$ for the $D_4$-crystal $B^{2,2}\otimes B^{2,1}.$  We insert as follows: $T'=2\to \left (1 \to \begin{array}{|c|}\hline \overline{4}\\ \hline 4\\ \hline \end{array}\:\right )=\begin{array}{|c|c|}\hline 1& \overline{4}\\ \hline 2& 4\\ \hline \end{array}\:$.  The corresponding highest weight vector is $1\otimes 2 \otimes 1 \otimes 2,$ which corresponds to $u_1\otimes u_1\in B^{2,2}\otimes B^{2,1}.$  By Theorem \ref{RMatrix} we have $\mathcal{R}(u_1\otimes u_1)=\varnothing \otimes u_2=\varnothing \otimes \begin{array}{|c|c|}\hline 1 & 1\\ \hline 2 & 2\\ \hline \end{array}\:.$  So, we remove letters from the tableau $T'$ successively from rows $2,1,2,1$ which gives $\overline{4}\otimes 4\otimes 1\otimes 2,$ which, in the $B(0)\otimes B(2\Lambda_2)$ component of $B^{2,1}\otimes B^{2,2}$ is interpreted as $\varnothing \otimes \begin{array}{|c|c|}\hline 1 & \overline{4}\\ \hline 2 & 4\\ \hline \end{array}\:.$

\section{Soliton Cellular Automaton}
We define $\mathcal{P}_L=\{b_1\otimes b_2 \otimes \cdots \otimes b_L\in(B^{2,1})^{\otimes L}| T_n=u_1, \text{ for } n \text{  sufficiently large}\}$ to be the set of \emph{states} of the $D_n^{(1)}$ soliton cellular automaton.  We depict the operation $\mathcal{R}(b\otimes b')=\tilde{b}'\otimes \tilde{b}$ by:
\setlength{\unitlength}{.5in}
\begin{center}
\begin{picture}(1.4,1.4)(-.5,-1)
\put(-.05,.05){$b$}
\put(0,0){\vector(0,-1){1}}
\put(-.05,-1.35){$\tilde{b}$}
\put(-.65,-.55){$b'$}
\put(-.5,-.5){\vector(1,0){1}}
\put(.55,-.55){$\tilde{b}'$}
\end{picture}
\end{center}
\vspace{.2in}
 Fix $s>0$ and let $u_s\in B^{2,s}$ be the highest weight vector for $D_n$.  For $p=b_1\otimes b_2 \otimes \cdots \otimes b_L\in \mathcal{P}_L$ we define the \emph{time evolution operator} $T_l(p)$:
\begin{equation}
T_l(p)\otimes u_l=\mathcal{R}_{L \: L+1}\mathcal{R}_{L-1 \: L}\cdots\mathcal{R}_{23} \mathcal{R}_{12}(u_l \otimes p). \label{ts}
\end{equation}
The transition of phase $T_l(b_1\otimes b_2 \otimes \cdots \otimes b_L)=\tilde{b}_1\otimes \tilde{b}_2 \otimes \cdots \otimes \tilde{b}_L$ is depicted as
\begin{center}
\begin{picture}(6,1.4)(-.5,-1)
\put(-.05,.05){$b_1$}
\put(0,0){\vector(0,-1){1}}
\put(-.05,-1.35){$\tilde{b}_1$}
\put(-1.7,-.55){$u^{(0)}=u_l$}
\put(-.5,-.5){\vector(1,0){1}}
\put(.55,-.55){$u^{(1)}$}
\put(1.45,.05){$b_2$}
\put(1.5,0){\vector(0,-1){1}}
\put(1.45,-1.35){$\tilde{b}_2$}
\put(1,-.5){\vector(1,0){1}}
\put(2.05,-.55){$u^{(2)}\cdots u^{(L-1)}$}
\put(4.45,.05){$b_L$}
\put(4.5,0){\vector(0,-1){1}}
\put(4.45,-1.35){$\tilde{b}_L$} 
\put(4,-.5){\vector(1,0){1}}
\put(5.05,-.55){$u^{(L)}=u_l.$}
\end{picture}
\end{center}
\vspace{.2in}
We define the \emph{state energy} to be the sum:
\begin{equation}
E_l(p)=-\sum_{i=0}^{L-1} H(u^{(i)}\otimes b_{i+1}).
\end{equation}
We can also use the combinatorial $R$-matrix $\mathcal{R}:B^{1,1} \otimes B^{2,1} \to B^{2,1} \otimes B^{1,1}$  to define an operator $T_\natural$ similar to $T_l$, by
	\begin{equation}
		T_\natural(p)\otimes b(p)=\mathcal{R}_{L \: L+1}\mathcal{R}_{L-1 \:L}
		\cdots\mathcal{R}_{23}\mathcal{R}_{12}\left (\:\begin{array}{|c|}\hline 1 \\ \hline \end{array} \otimes p\right )\:. \label{tn}
	\end{equation}
In this case, $b(p)$ is dependent on the state $p$ so we indicate this dependence in the definition.

\subsection{$D_n^{(1)}$-solitons and their scattering rules}
Experience from other soliton cellular automata has shown that states $p\in\mathcal{P}_L$ satisfying $E_1(p)=1$ correspond to the so-called ``one-soliton states''. 
\begin{proposition}\label{onesol}
In the $D_n^{(1)}$ SCA, $E_1(p)=1$ if and only if $p\neq u_1^{\otimes L}$ and is of the following form:
\begin{equation}\label{1soliton}
u_1^{\otimes i}\otimes 
\:\begin{array}{|c|}\hline 2\\ \hline b_1\\ \hline\end{array}\:\otimes
\: \begin{array}{|c|}\hline 2\\ \hline b_2\\ \hline\end{array}\: \otimes
\cdots
\otimes
\:\begin{array}{|c|}\hline 2\\ \hline b_j\\ \hline\end{array}\: \otimes
\:\begin{array}{|c|}\hline 1\\ \hline b_{j+1}\\ \hline\end{array}\: \otimes
\:\begin{array}{|c|}\hline 1\\ \hline b_{j+2}\\ \hline\end{array}\: \otimes
\cdots
\otimes
\:\begin{array}{|c|}\hline 1\\ \hline b_k\\ \hline\end{array}\: \otimes
u_1^{\otimes l}
\end{equation}
for some $i,j\leq k,l \in \mathbb{Z}_{\geq 0}$ such that $i+k+l=L$, and some $b_1\geq b_{2}\geq \cdots \geq b_{k}\in B\backslash \{1,2,\overline{2},\overline{1}\}.$  
\end{proposition}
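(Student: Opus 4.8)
The plan is to compute $E_1(p)$ by passing the carrier $u^{(i)}\in B^{2,1}$ through $p$ one site at a time and reading off exactly which transitions cost energy. First I would observe, from Corollary \ref{energy} at $s=1$ together with the fact that the energy $H$ is constant on each $D_n$-classical component (the operators $\tilde e_i,\tilde f_i$, $i\neq0$, preserve $H$), that every value of $H$ on $B^{2,1}\otimes B^{2,1}$ lies in $\{0,-1,-2\}$; in particular $H\leq 0$. Thus $E_1(p)=-\sum_{i=0}^{L-1}H(u^{(i)}\otimes b_{i+1})\geq 0$, and $E_1(p)=1$ if and only if exactly one summand equals $-1$ while all the rest vanish. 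The problem therefore reduces to locating the energy-$0$ and energy-$(-1)$ transitions of the carrier.

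The decisive input is a \emph{carrier lemma} describing when $H(c\otimes d)=0$ for $c,d\in B^{2,1}$. Under the reading convention, a pair of height-two columns $c\otimes d$ is the $2\times2$ array with left column $d$ and right column $c$; since the component of $u_1\otimes u_1$ (isomorphic to $B(2\Lambda_2)$) is the unique one with $H=0$, I claim $H(c\otimes d)=0$ precisely when that array is a legal element of $B(2\Lambda_2)$ (both rows weakly increasing, together with the column and configuration conditions), and $H(c\otimes d)\leq-1$ otherwise. Because $\mathcal R$ fixes $u_1\otimes u_1$ and commutes with every $\tilde f_i$, it is the identity on the multiplicity-free component $B(2\Lambda_2)$; hence on an energy-$0$ transition the carrier simply becomes $d$ and the emitted letter becomes $c$. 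I would prove this using Theorem \ref{RMatrix} and reverse Lecouvey insertion, and I would isolate the two computations that drive everything: (a) for each admissible soliton column $\begin{array}{|c|}\hline c\\ \hline b\\ \hline\end{array}$ with $c\in\{1,2\}$ and $b\in\mathcal B\setminus\{1,2,\overline2,\overline1\}$ one has $H\!\left(u_1\otimes\begin{array}{|c|}\hline c\\ \hline b\\ \hline\end{array}\right)=-1$, because raising that pair always reaches the highest weight vector $u_1\otimes\begin{array}{|c|}\hline 1\\ \hline 3\\ \hline\end{array}$ (energy $-1$ by Corollary \ref{energy}); and (b) for $d=\varnothing$, or for any column whose top lies outside $\{1,2\}$ or whose bottom lies in $\{1,2,\overline2,\overline1\}$, the carrier emerging from $u_1\otimes d$ cannot be carried to the right without incurring a second nonzero contribution.

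For sufficiency, given $p$ as in \eqref{1soliton} I would run the carrier through: over the leading $u_1$'s it stays $u_1$ with $H=0$ since $\mathcal R(u_1\otimes u_1)=u_1\otimes u_1$; at the leftmost soliton column the array is not legal, so by (a) this single transition costs $-1$ and the carrier becomes that column; for each subsequent soliton column the decreasing patterns $2^j1^{k-j}$ on top and $b_1\geq\cdots\geq b_k$ on the bottom make the relevant $2\times2$ array legal, so those transitions have $H=0$ and the carrier advances; finally the last soliton column paired with the trailing $u_1$ is legal, the carrier re-equilibrates to $u_1$, and the remaining transitions cost $0$, giving $E_1(p)=1$. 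For necessity I argue in reverse: by the reduction the unique energy-$(-1)$ transition occurs at the first non-vacuum letter $b_m$; fact (b) forces $b_m$ to be an admissible soliton column, after which the carrier equals $b_m$; requiring $H=0$ at every later step forces each consecutive (carrier, letter) pair to be a legal $2\times2$ array, which yields the weakly decreasing top row $2^j1^{k-j}$, the weakly decreasing bottom row $b_1\geq\cdots\geq b_k$ with all entries in $\mathcal B\setminus\{1,2,\overline2,\overline1\}$, and contiguity of the non-vacuum block (an interior $u_1$ would re-equilibrate the carrier and a later non-vacuum letter would cost a second $-1$). Since $E_1(p)\neq0$ we also have $p\neq u_1^{\otimes L}$, giving exactly \eqref{1soliton}.

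The genuine obstacle is the carrier lemma, and within it the two off-highest-weight computations (a) and (b) on $B^{2,1}\otimes B^{2,1}$, which require Theorem \ref{RMatrix} combined with reverse Lecouvey insertion and a careful case analysis of how the carrier transforms. The subtlest point is the exclusion of the empty letter $\varnothing$: the transition $u_1\otimes\varnothing$ already costs $-1$, yet the carrier then becomes $\varnothing$ and $H(\varnothing\otimes u_1)=-1$ forces a second $-1$, so a lone $\varnothing$ has $E_1=2$ and is not a soliton. This shows that an isolated energy-$(-1)$ transition does not by itself certify a soliton, and it is exactly this phenomenon that makes the necessity direction delicate.
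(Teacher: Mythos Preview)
Your strategy is the same as the paper's: use that $H\le 0$ on $B^{2,1}\otimes B^{2,1}$ so that $E_1(p)=1$ forces exactly one summand equal to $-1$, then classify which local pairs give $H=0$ versus $H=-1$, handling the $\varnothing$ case separately. The argument is correct.

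However, you have missed the one-line observation that collapses your entire ``carrier lemma'' machinery: since $\mathcal R:B^{2,1}\otimes B^{2,1}\to B^{2,1}\otimes B^{2,1}$ is a crystal isomorphism between a crystal and itself, and the combinatorial $R$-matrix is unique, $\mathcal R$ is the \emph{identity map} on $B^{2,1}\otimes B^{2,1}$. Consequently $u^{(i)}=b_i$ for all $i\ge 1$, and the state energy is simply
\[
E_1(p)=-H(u_1\otimes b_1)-\sum_{i=1}^{L-1}H(b_i\otimes b_{i+1}),
\]
a sum of nearest-neighbor local energies with no carrier to track. The paper opens with exactly this remark and then runs your argument in two short paragraphs. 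Your piecemeal verifications that ``the carrier becomes $d$'' on the $B(2\Lambda_2)$ component and ``the carrier becomes that column'' at the first soliton letter are all special cases of $\mathcal R=\mathrm{id}$; recognising this globally would let you drop the carrier lemma, the reverse Lecouvey computations, and most of the case analysis in (a) and (b).
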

\begin{proof}
Let $p=T_1\otimes T_2\otimes \cdots \otimes T_L\in \mathcal{P}_L$ be such that $E_1(p)=1$.  For $T\otimes T' \in B^{2,1} \otimes B^{2,1}$ it is the case that $\mathcal{R}(T \otimes T')=T\otimes T'.$  Thus, $E_1(p)=-H(u_1\otimes T_1)-\sum_{i=1}^L H(T_i\otimes T_{i+1})=1$, and all the terms appearing in this sum are $\geq 0$ by Corollary \ref{energy}.  Now, $u_1\otimes T=0$ if and only if $T=\:\begin{array}{|c|}\hline 1\\ \hline 2\\ \hline\end{array}\:$.  Suppose that we have $H(T_k\otimes T_{k+1})=-1$ for some $T_{k}=u_1$.  In our SCA, $T_k=u_1, k\gg0$ and $H(\varnothing \otimes T')<0, T'\in B^{2,1},$ hence it must not be the case that $T_{k+1}=\varnothing.$  So we are left in the case that $u_1 \otimes T$ has $D_n$ highest weight vector $u_1\otimes \:\begin{array}{|c|}\hline 1\\ \hline 3\\ \hline\end{array}\:$.  By column insertion, (Theorem \ref{insertion}) this is only the case if $T=\:\begin{array}{|c|}\hline 1\\ \hline b\\ \hline\end{array}\:$, or $\begin{array}{|c|}\hline 2\\ \hline b\\ \hline\end{array}\:$, where $b\in B\backslash \{1,2,\overline{2},\overline{1}\}$.

Since $E_1(p)=1$, we must have $H(T_j\otimes T_{j+1})=0, k<j<L.$  Supposing $T_j$ to be of the form $\begin{array}{|c|}\hline b_1\\ \hline b_2\\ \hline\end{array}\:,b_1\in \{1,2\}$, we see that $T_{j+1}$ must be $\:\begin{array}{|c|}\hline b_1'\\ \hline b_2'\\ \hline\end{array}\:, b_1' \leq b_1, b_2'\leq b_2,b_2'\in B\backslash \{1,\overline{2},\overline{1}\}$ in order for $T_{j}\otimes T_{j+1}$ to have $D_n$ highest weight vector $u_1 \otimes u_1$.  Finally, $T_j=u_1=\begin{array}{|c|}\hline 1\\ \hline 2\\ \hline\end{array}\:, j\gg 0$ for our SCA.  
\end{proof}
The proof of the following proposition is completely analogous to that of Proposition 9 in \cite{MOW}, except that $\varepsilon_i(u_l),$ and $\varphi_i(u_l)$ are now 0. 
\begin{proposition}\label{commute}
Let $p\in \mathcal{P}_L$.  If $\tilde{e}_i(p)\neq 0$ then $\tilde{e}_iT_l(p)=T_l(\tilde{e}_i(p)),i\neq 0,2$ and $E_l(\tilde{e}_i(p))=E_l(p)$, otherwise $\tilde{e}_iT_l(p)=0,i\neq 0,2$.  The same relations hold for $\tilde{f}_i,i\neq 0,2$.  
\end{proposition}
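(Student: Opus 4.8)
The plan is to use that every combinatorial $R$-matrix is a $U_q(D_n^{(1)})$-crystal isomorphism and hence commutes with $\tilde e_i$ and $\tilde f_i$ for all $i\in I$; in particular the whole product $\mathcal{R}_{L\,L+1}\cdots\mathcal{R}_{12}$ occurring in \eqref{ts} commutes with $\tilde e_i$. The only thing that can obstruct the identity $\tilde e_i T_l(p)=T_l(\tilde e_i p)$ is the behaviour of the two boundary carriers $u_l$ — the one fed in on the left of \eqref{ts} and the one produced on the right — so the entire argument reduces to controlling how $\tilde e_i$ interacts with these two copies of $u_l$.

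First I would record the boundary fact. Since $u_l$ is the $D_n$ highest weight vector of $B^{2,l}$ we have $\tilde e_i u_l=0$ for all $i\neq 0$, so $\varepsilon_i(u_l)=0$, while $\varphi_i(u_l)=\langle h_i,\mathrm{wt}(u_l)\rangle=l\,\delta_{i,2}$; thus for $i\neq 0,2$ both $\varepsilon_i(u_l)=0$ and $\varphi_i(u_l)=0$. Feeding this into the tensor product rule of Definition \ref{tensor} gives, for such $i$, the two passing-through identities $\tilde e_i(u_l\otimes p)=u_l\otimes \tilde e_i p$ and $\tilde e_i(T_l(p)\otimes u_l)=\tilde e_i T_l(p)\otimes u_l$: the vanishing of $\varphi_i(u_l)$ (resp.\ $\varepsilon_i(u_l)$) forces $\tilde e_i$ onto the non-carrier factor. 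Chaining these through the commutation of $\tilde e_i$ with the $R$-matrix product yields
\begin{align*}
\tilde e_i T_l(p)\otimes u_l &=\tilde e_i\bigl(T_l(p)\otimes u_l\bigr)=\mathcal{R}_{L\,L+1}\cdots\mathcal{R}_{12}\,\tilde e_i(u_l\otimes p)\\
&=\mathcal{R}_{L\,L+1}\cdots\mathcal{R}_{12}(u_l\otimes \tilde e_i p)=T_l(\tilde e_i p)\otimes u_l,
\end{align*}
and comparing first tensor factors gives $\tilde e_i T_l(p)=T_l(\tilde e_i p)$. When $\tilde e_i p=0$ we instead have $\varepsilon_i(p)=0$, whence $\tilde e_i(u_l\otimes p)=0$; applying the bijective $R$-matrix product and the identity $\tilde e_i(T_l(p)\otimes u_l)=\tilde e_i T_l(p)\otimes u_l$ forces $\tilde e_i T_l(p)=0$. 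The treatment of $\tilde f_i$ is identical, using the $\tilde f_i$ clauses of Definition \ref{tensor}.

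For $E_l(\tilde e_i p)=E_l(p)$ I would show that each local energy term is individually preserved. Because $\tilde e_i$ commutes with every $\mathcal{R}$, the array computing the evolution of $\tilde e_i p$ is the $\tilde e_i$-image of the array for $p$; in particular its carriers are the images of $u^{(0)},\dots,u^{(L)}$ under the global action, and by the boundary computation the two end carriers remain $u_l$. Fixing a vertex $j$, I regard the intermediate configuration as a triple tensor product $X\otimes(u^{(j)}\otimes b_{j+1})\otimes Y$ with $X=\tilde b_1\otimes\cdots\otimes\tilde b_j$ and $Y=b_{j+2}\otimes\cdots\otimes b_L$. By associativity of the tensor product rule, the global $\tilde e_i$ either leaves the middle factor $u^{(j)}\otimes b_{j+1}$ untouched, or replaces it by $\tilde e_i(u^{(j)}\otimes b_{j+1})$; in the first case $H(u^{(j)}\otimes b_{j+1})$ is unchanged trivially, and in the second it is unchanged by the relation $H(\tilde e_i(b\otimes b'))=H(b\otimes b')$, valid for $i\neq 0$ (the ``otherwise'' clause of the energy function). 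Summing over $j=0,\dots,L-1$ gives $E_l(\tilde e_i p)=E_l(p)$.

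The delicate point — and the only place the hypothesis $i\neq 0,2$ is genuinely used — is the boundary analysis: one must verify that $\tilde e_i$ never modifies either copy of the carrier $u_l$, so that the evolution of $\tilde e_i p$ still has $u_l$ on both ends and the formulas for $T_l$ and $E_l$ apply verbatim. This is exactly what $\varepsilon_i(u_l)=\varphi_i(u_l)=0$ guarantees; the exclusions are sharp, since for $i=2$ the identity fails because $\varphi_2(u_l)=l\neq 0$ makes $\tilde e_2$ interact nontrivially with the left carrier, and for $i=0$ the energy is genuinely not conserved. This is precisely the mechanism of Proposition 9 of \cite{MOW}, the present simplification being that here both $\varepsilon_i(u_l)$ and $\varphi_i(u_l)$ vanish, removing the boundary corrections that had to be carried along there.
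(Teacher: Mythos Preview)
Your argument is correct and is exactly the approach the paper has in mind: it does not give its own proof but merely points to Proposition~9 of \cite{MOW} and notes the simplification $\varepsilon_i(u_l)=\varphi_i(u_l)=0$ for $i\neq 0,2$, which is precisely the boundary fact you isolate and exploit. You have simply written out in full the details the paper leaves implicit.
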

Recall the $A_1^{(1)}$-crystal $\widehat{B}^{1,s}=\{(x_1,x_2)\in \mathbb{Z}_{\geq 0}^2| x_1+x_2=s\}$, where $(x_1,x_2)$ can be associated with the set of tableaux: 
$$\begin{array}{l @{} l}\underbrace{\begin{array}{|c|c|c|c|}\hline 1' & 1' & \cdots & 1' \\ \hline\end{array}}_{x_1}& 
\underbrace{\begin{array}{c|c|c|c|}\hline  2' & 2' & \cdots & 2'\\ \hline \end{array}}_{x_2}\end{array},$$
and, similarly, the $D_n^{(1)}$-crystal ${B}^{1,s}=\{(x_1,x_2,\cdots, \overline{x}_1)\in \mathbb{Z}_{\geq 0}^{2n}| x_1+x_2+\cdots+\overline{x}_1=s,x_n=0 \text{ or }\overline{x}_n=0\}$ can be associated with the tableaux: 
$$\begin{array}{l @{} l @{} l @{} l @{} l @{} l}\underbrace{\begin{array}{|c|c|c|c|}\hline 1 & 1 & \cdots & 1 \\ \hline\end{array}}_{x_1}& 
\underbrace{\begin{array}{c|c|c|c|}\hline  2 & 2 & \cdots & 2\\ \hline \end{array}}_{x_2}&\begin{array}{|c|}\hline \cdots\\ \hline \end{array}&\underbrace{\begin{array}{c|c|c|c|}\hline  \overline{1} & \overline{1} & \cdots & \overline{1}\\ \hline \end{array}}_{\overline{x}_1}\end{array}.$$

\begin{proposition} \label{bijection}
Let $(T,T')$ be an element of the $A_1^{(1)}\oplus D_{n-2}^{(1)}$-crystal $\widehat{B}^{1,s}\times {B}^{1,s},n>5$.  Define the map $i_s:\widehat{B}^{1,s}\times {B}^{1,s}\to (B^{2,1})^{\otimes s}$ given by 
$$\big (\:\begin{array}{|c|c|}\hline 1'^{j}&2'^{s-j}\\ \hline\end{array}\:, \:\begin{array}{|c|c|c|c|}\hline b_1& b_{2}&\cdots& b_{s} \\ \hline\end{array}\:\big )\mapsto
$$
$$
\begin{array}{|c|}\hline 2\\ \hline b_s'\\ \hline\end{array}\:\otimes
\: \begin{array}{|c|}\hline 2\\ \hline b_{s-1}'\\ \hline\end{array}\: \otimes
\cdots
\otimes
\:\begin{array}{|c|}\hline 2\\ \hline b_{s-j}'\\ \hline\end{array}\: \otimes
\:\begin{array}{|c|}\hline 1\\ \hline b_{s-j-1}'\\ \hline\end{array}\: \otimes
\:\begin{array}{|c|}\hline 1\\ \hline b_{s-j-2}'\\ \hline\end{array}\: \otimes
\cdots
\otimes
\:\begin{array}{|c|}\hline 1\\ \hline b_1'\\ \hline\end{array}$$
where,
$$
b_k'=\begin{cases}b_k+2,&\text{ if }1\leq b_k \leq n-2\\
		\overline{c+2},& \text{ if }b_k=\overline{c},1\leq c \leq n-2\end{cases}
$$
Then $i_s$ satisfies the relations:
$$i_s(\tilde{e}_1^AT,T')= \tilde{e}_1 i_s(T,T'),i_s(\tilde{f}_1^AT,T')=\tilde{f}_1i_s(T,T'),$$
$$i_s(T,\tilde{e}_i^DT')= \tilde{e}_{i+2} i_s(T,T'), i_s(T,\tilde{f}_i^DT')=\tilde{f}_{i+2}i_s(T,T'), 1\leq i\leq n-2,$$
If $n=5$ then we have:
$$
\begin{array}{|c|}\hline 1'\\ \hline 2'\\ \hline\end{array}\mapsto \begin{array}{|c|}\hline 3\\ \hline \end{array}\;, 
\begin{array}{|c|}\hline 1'\\ \hline 3'\\ \hline\end{array}\mapsto \begin{array}{|c|}\hline 4\\ \hline \end{array}\:,
\begin{array}{|c|}\hline 2'\\ \hline 3'\\ \hline\end{array}\mapsto \begin{array}{|c|}\hline 5\\ \hline \end{array}\:,
\begin{array}{|c|}\hline 1'\\ \hline 4'\\ \hline\end{array}\mapsto \begin{array}{|c|}\hline \overline{5}\\ \hline \end{array}\:,
\begin{array}{|c|}\hline 2'\\ \hline 4'\\ \hline\end{array}\mapsto \begin{array}{|c|}\hline \overline{4}\\ \hline \end{array}\:,
\begin{array}{|c|}\hline 3'\\ \hline 4'\\ \hline\end{array}\mapsto \begin{array}{|c|}\hline \overline{3}\\ \hline \end{array}\:, 
$$
yields a map $i_s$ satisfying 
$$i_s(\tilde{e}_1^AT,T')= \tilde{e}_1 i_s(T,T'),i_s(\tilde{f}_1^AT,T')=\tilde{f}_1i_s(T,T'),$$
$$i_s(T,\tilde{e}_i^AT')= \tilde{e}_{\sigma(i)+2} i_s(T,T'), i_s(T,\tilde{f}_i^AT')=\tilde{f}_{\sigma(i)+2}i_s(T,T'), 1\leq i \leq 3$$
where $\sigma$ transposes 1 and 2 and fixes 3.

For $n=4$ we have:
$$
((x_1,x_2),(y_1,y_2))\mapsto \overline{3}^{s-\max(x_1,y_1)}\otimes \overline{4}^{(x_1-y_1)_+}\otimes 4^{(y_1-x_1)_+}\otimes 3^{\min(x_1,y_1)} 
$$
where $(x)_+=\max(x,0),$ yields a map $i_s$ satisfying:
$$i_s(\tilde{e}_1^AT,T',T'')= \tilde{e}_1 i_s(T,T',T''),i_s(\tilde{f}_1^AT,T',T'')=\tilde{f}_1i_s(T,T',T''),$$
$$i_s(T,\tilde{e}_1^AT',T'')= \tilde{e}_3 i_s(T,T',T''),i_s(T,\tilde{f}_1^AT',T'')=\tilde{f}_3i_s(T,T',T''),$$
$$i_s(T,T',\tilde{e}_1^AT'')= \tilde{e}_4 i_s(T,T',T''),i_s(T,T',\tilde{f}_1^AT'')=\tilde{f}_4i_s(T,T',T'').$$
\end{proposition}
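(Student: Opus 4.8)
\emph{Proof proposal.} The plan is to exploit the fact that the map $i_s$ cleanly separates the two commuting factors: the top entries (each a $1$ or a $2$) of the image carry the $A_1$ data, while the bottom entries (each lying in the ``middle'' alphabet $\{3,\dots,n,\overline n,\dots,\overline 3\}$ after the shift $b\mapsto b'$) carry the $D_{n-2}$ data. The key structural point, which I would isolate first as two transparency observations, is that these blocks never interfere under the relevant Kashiwara operators. Concretely, for index $1$ every admissible bottom letter $b_k'$ satisfies $\varepsilon_1(b_k')=\varphi_1(b_k')=0$, and for every index $i+2$ with $1\le i\le n-2$ (hence $\ge 3$) each top letter $1,2$ satisfies $\varepsilon_{i+2}=\varphi_{i+2}=0$. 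Granting these, the conditional rules built into Definition \ref{tensor} show that $\tilde e_1,\tilde f_1$ see only the subword of top letters and that $\tilde e_{i+2},\tilde f_{i+2}$ see only the subword of bottom letters.

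With transparency in hand, the two families of relations reduce to two independent computations. First I would check that the top subword sweeps out a single $A_1$-string of length $s$, with $\tilde e_1,\tilde f_1$ moving along it exactly as $\tilde e_1^A,\tilde f_1^A$ move along $\widehat B^{1,s}$; this is immediate once one computes $\varepsilon_1,\varphi_1$ of the reduced word, and it yields the first pair of relations. Second, I would verify that the shift $c\mapsto c+2$, $\overline c\mapsto\overline{c+2}$ is precisely the crystal embedding induced by the inclusion of the sub-Dynkin diagram on nodes $3,\dots,n$ into that of $D_n$, so that on single letters $\tilde e_i^D$ is intertwined with $\tilde e_{i+2}$ and $\tilde f_i^D$ with $\tilde f_{i+2}$. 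One then checks that reading the bottom entries in the column order prescribed by $i_s$ reproduces the standard reading word of $B^{1,s}$ for $D_{n-2}$, so that the tensor-product crystal structure of the bottom row is exactly that of $B^{1,s}$; combined with transparency this gives the second family of relations.

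The delicate point — and the step I expect to be the main obstacle — is the fork of $D_{n-2}$, i.e. the nodes $n-3,n-2$ mapping to the spin nodes $n-1,n$ of $D_n$, where the operators act asymmetrically on the letters $n,\overline n$. For $n\ge 6$ one verifies directly from the single-letter formulas that $\tilde f_{n-2}^D$ matches $\tilde f_n$ and $\tilde f_{n-3}^D$ matches $\tilde f_{n-1}$ under the shift, and these indices remain distinct. The genuine obstacle is the low-rank degeneration. When $n=5$ we have $D_{n-2}=D_3\cong A_3$, so the second factor must be described through the exceptional isomorphism $\wedge^2(\mathbb C^4)\cong$ the vector representation of $D_3$ — this is exactly why the map on the six letters is given via antisymmetric pairs $i'j'$, and why the Dynkin relabeling $\sigma$ transposing $1,2$ appears in the relations; when $n=4$ we have $D_2\cong A_1\oplus A_1$, so the second factor splits into two further $A_1$-crystals attached to nodes $3$ and $4$, which forces the explicit three-$A_1$ description in terms of $(x_1,x_2)$ and $(y_1,y_2)$. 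In each of these two cases I would abandon the generic embedding and instead verify the listed letter assignments directly, checking the intertwining relations on the finitely many single letters and then propagating through the rule of Definition \ref{tensor} exactly as in the generic case.
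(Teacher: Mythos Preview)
Your proposal is correct and follows essentially the same approach as the paper's proof: both rest on the transparency observation that the top entries (letters $1,2$) and the bottom entries (letters in $\{3,\dots,n,\overline n,\dots,\overline 3\}$) are invisible to each other's Kashiwara operators, so that $\tilde e_1,\tilde f_1$ act only on the top subword as $\tilde e_1^A,\tilde f_1^A$, while $\tilde e_{i+2},\tilde f_{i+2}$ act only on the bottom subword as $\tilde e_i^D,\tilde f_i^D$ under the shift $b\mapsto b'$. Your discussion is in fact more careful than the paper's, which treats only the generic $n>5$ case explicitly and leaves the low-rank degenerations $n=4,5$ (and the fork nodes) to the reader.
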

\begin{proof}
We have:
\begin{equation*}
i_s(\tilde{e}_1^A\begin{array}{|c|c|}\hline 1'^{j}&2'^{s-j}\\ \hline\end{array}\:,T')= i_s(\tilde{e}_1^A(2^{\otimes s-j}\otimes 1^{\otimes j}),T'),
\end{equation*}
and 
\begin{multline*}
\tilde{e}_1 i_s\big (\:\begin{array}{|c|c|}\hline 1'^{j}&2'^{s-j}\\ \hline\end{array}\:,T' \big)=\\
\tilde{e}_1 \bigg (\:\begin{array}{|c|}\hline 2\\ \hline b_s'\\ \hline\end{array}\:\otimes
\: \begin{array}{|c|}\hline 2\\ \hline b_{s-1}'\\ \hline\end{array}\: \otimes
\cdots
\otimes
\:\begin{array}{|c|}\hline 2\\ \hline b_{s-j}'\\ \hline\end{array}\: \otimes
\:\begin{array}{|c|}\hline 1\\ \hline b_{s-j-1}'\\ \hline\end{array}\: \otimes
\:\begin{array}{|c|}\hline 1\\ \hline b_{s-j-2}'\\ \hline\end{array}\: \otimes
\cdots
\otimes
\:\begin{array}{|c|}\hline 1\\ \hline b_1'\\ \hline\end{array} \: \bigg ).
\end{multline*}
Observe $\tilde{e}_1$ only has non-zero action on the top row of $i_s\big (\:\begin{array}{|c|c|}\hline 1'^{j}&2'^{s-j}\\ \hline\end{array}\:,T' \big),$ since $b_k'\in \{3,4,\dots, n,\overline{n},\dots, \overline{4},\overline{3}\}.$  Also, the action of $\tilde{e}_1$ on the top row of $i_s\big (\:\begin{array}{|c|c|}\hline 1'^{j}&2'^{s-j}\\ \hline\end{array}\:\big )$ is equivalent to $\tilde{e}_1^A(2'^{\otimes s-j}\otimes 1'^{\otimes j})$, from which we may deduce that $i_s(\tilde{e}_1^AT,T')=\tilde{e}_1i_s(T,T')$.
 The proof is similar for $i_s(\tilde{f}_1^AT,T')=\tilde{f}_1i_s(T,T').$

The proof of $i_s(T,\tilde{e}_i^DT')= \tilde{e}_{i+2} i_s(T,T')$, and $i_s(T,\tilde{f}_i^DT')=\tilde{f}_{i+2}i_s(T,T'), 1\leq i\leq n-2$ is similar, except that $,\tilde{e}_{i+2}$ and $\tilde{f}_{i+2}$ act on the bottom row of $i_s(T,T')$. One easily checks that the actions of $\tilde{e}_{i+2}$ and $\tilde{f}_{i+2}$ on $b'_k$ are equivalent to $\tilde{e}_i^D$ and $\tilde{f}_i^D$ on $b_k$.
\end{proof}
\emph{Remark:} The above map is a bijection from $\widehat{B}^{1,s}\times {B}^{1,s},$ (or $\widehat{B}^{1,s}\times \widehat{B}^{2,s}$, or ($\widehat{B}^{1,s})^3)$ to $\{p\in\mathcal{P}_L|E_1(p)=1\}$ by Proposition \ref{onesol}.

A state of the following form is called and $m$-soliton state:
$$\dots [s_1]\dots\dots[s_2]\dots\cdots\dots[s_m]\dots$$
 where $s_1>s_2> \cdots > s_m$, $\dots[s]\dots$ denotes a local configuration of the form $\begin{array}{|c|}\hline 2\\ \hline b_1\\ \hline\end{array}\otimes
\: \begin{array}{|c|}\hline 2\\ \hline b_2\\ \hline\end{array}\: \otimes
\cdots
\otimes
\:\begin{array}{|c|}\hline 2\\ \hline b_j\\ \hline\end{array}\: \otimes
\:\begin{array}{|c|}\hline 1\\ \hline b_{j+1}\\ \hline\end{array}\: \otimes
\:\begin{array}{|c|}\hline 1\\ \hline b_{j+2}\\ \hline\end{array}\: \otimes
\cdots
\otimes
\:\begin{array}{|c|}\hline 1\\ \hline b_s\\ \hline\end{array}$
for some $b_1\geq b_{2}\geq \cdots \geq b_{s}\in B\backslash \{1,2,\overline{2},\overline{1}\}$, and the $[s_i]$ are separated by sufficiently many $u_1$'s.

\begin{proposition}
Let $p$ be a one-soliton state of length $s$.  Then 
\begin{enumerate}
\item $E_k(p)=\min(k,s)$,
\item $T_k(p)$ is obtained by rightward shift by $\min(k,s)$ lattice steps.
\end{enumerate}
\end{proposition}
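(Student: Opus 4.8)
The plan is to transport the carrier $u_k$ across the soliton using the $R$-matrices in \eqref{ts}, reducing first to a short list of representative states so that the intermediate carriers become computable. The key reduction is Proposition \ref{commute}: since $T_k$ commutes with $\tilde{e}_i,\tilde{f}_i$ for $i\neq 0,2$ and leaves $E_k$ invariant, while the rightward shift commutes with all crystal operators, it suffices to establish both assertions on one representative of each connected component of the one-soliton family of Proposition \ref{onesol} under $\langle\tilde{e}_1,\tilde{f}_1\rangle$ together with $\langle\tilde{e}_{i+2},\tilde{f}_{i+2}\rangle_{1\leq i\leq n-2}$. Through the bijection $i_s$ of Proposition \ref{bijection} these components correspond to the finite-type $A_1\times D_{n-2}$ highest-weight vectors of $\widehat{B}^{1,s}\times B^{1,s}$; the principal one is the homogeneous state $u_1^{\otimes i}\otimes c^{\otimes s}\otimes u_1^{\otimes l}$, where $c$ is the single column with top entry $1$ and bottom entry $3$, and the remaining representatives (arising from the several finite $D_{n-2}$-components of $B^{1,s}$) are handled analogously. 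Because $E_k$ and the magnitude of the shift depend only on the length $s$, it is enough to treat these.

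Next I would compute the carrier trajectory $u^{(0)}=u_k,u^{(1)},\dots,u^{(L)}=u_k$ for such a representative. While $u_k$ traverses the leading block $u_1^{\otimes i}$ nothing happens, since $\mathcal{R}(u_k\otimes u_1)=u_1\otimes u_k$ and $H(u_k\otimes u_1)=0$ by Theorem \ref{RMatrix} and Corollary \ref{energy} (read with the theorem's parameter equal to the carrier length $k$). The first encounter with the soliton is again covered by Theorem \ref{RMatrix}: $\mathcal{R}(u_k\otimes c)$ emits $u_1$ below and returns a carrier into which one soliton column has been absorbed, with $H(u_k\otimes c)=-1$ (any other admissible bottom letter lies in the same $D_n$-component and gives the same value), contributing $+1$ to the energy. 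The heart of the matter is to iterate this across the remaining $s-1$ columns using the insertion-based computation of $\mathcal{R}$ on $B^{2,s}\otimes B^{2,1}$ described above, and to show that the carrier behaves as a capacity-$k$ queue: it keeps absorbing soliton columns---each absorption emitting $u_1$ and carrying $H=-1$---until it is saturated after $\min(k,s)$ steps; thereafter, when $s>k$, each step discharges the oldest stored column while taking in the current one with $H=0$; and once the soliton has been passed the stored columns are released into the trailing vacua, where $T_n=u_1$ for $n\gg 0$ guarantees the needed room.

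Both statements then drop out of this trajectory. Exactly $\min(k,s)$ steps are absorptions with $H=-1$ and all other steps have $H=0$, so $E_k(p)=-\sum_i H(u^{(i)}\otimes b_{i+1})=\min(k,s)$, which is (1); and reading off the emitted word $\tilde{b}_1\otimes\cdots\otimes\tilde{b}_L$ shows that the block of $s$ nonvacuum columns reappears precisely $\min(k,s)$ sites to the right of its original position, which is (2). I expect the genuine obstacle to be the middle step: the intermediate carriers are no longer $D_n$ highest-weight, so neither Theorem \ref{RMatrix} nor Corollary \ref{energy} applies to them directly, and justifying the queue behaviour---that is, evaluating $\mathcal{R}$ and $H$ on these carriers---requires careful bookkeeping with Lecouvey's insertion algorithm and the oscillating-tableau correspondence of Theorem \ref{insertion}, in the spirit of the one-soliton analyses of \cite{FOY} and \cite{MOW}.
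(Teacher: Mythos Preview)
Your approach is exactly the paper's: reduce via Proposition~\ref{commute} to the single representative $p=c^{\otimes s}\otimes u_1^{\otimes L-s}$ with $c=\begin{array}{|c|}\hline 1\\\hline 3\\\hline\end{array}$, then track the carrier across it. Two remarks dissolve your stated worries. First, there are no ``remaining representatives'': the classical $D_{n-2}$-crystal $B^{1,s}$ (and likewise $\widehat{B}^{2,s}$ for $n=5$, $(\widehat{B}^{1,s})^2$ for $n=4$) is irreducible, so the $\{1,3,\ldots,n\}$-component containing $c^{\otimes s}$ is the whole one-soliton family. Second, the ``middle step'' is not an obstacle: the intermediate carriers all have the explicit form $\begin{array}{|c|c|}\hline 1^i&1^{k-i}\\\hline 2^i&3^{k-i}\\\hline\end{array}$, and the paper simply records the four needed identities---absorbing $c$ emits $u_1$ with $H=-1$ when $i>0$; the saturated carrier passes $c$ with $H=0$; meeting $u_1$ emits $c$ with $H=0$ when $i<k$; and $u_k$ passes $u_1$---each a one-line check with the insertion procedure of Section~3.2 since every bottom-row entry lies in $\{2,3\}$.
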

\begin{proof}
By applying sufficient operators $\tilde{e}_i, i\neq 0,2$, $p$ becomes ${\begin{array}{|c|}\hline 1\\ \hline 3\\ \hline\end{array}\:}^{\otimes s}\otimes u_1\otimes u_1\otimes \cdots \otimes u_1$.  Using the combinatorial $R$-matrix we obtain:
\begin{eqnarray}
\mathcal{R}\bigg (\:\begin{array}{|c|c|}\hline  1^i & 1^{l-i} \\ \hline 2^i& 3^{s-i}\\ \hline \end{array}\:\otimes\: \begin{array}{|c|}\hline 1\\ \hline 3\\ \hline \end{array}\:\bigg )
	&=&u_1 \otimes \: \begin{array}{|c|c|}\hline  1^{i-1} & 1^{s-i+1} \\ \hline 2^{i-1} & 3^{s-i+1}\\ \hline\end{array} \text{ if } i>0,\label{A1}\\
\mathcal{R}\bigg (\:\begin{array}{|c|}\hline 1^s\\ \hline 3^{s} \\ \hline \end{array}\: \otimes \: \begin{array}{|c|}\hline 1\\ \hline 3\\ \hline \end{array}\: \bigg )
	&=&\begin{array}{|c|}\hline 1\\ \hline 3\\ \hline \end{array}\: \otimes \begin{array}{|c|}\hline 1^s \\ \hline  3^{s} \\ \hline \end{array}\label{A2}\\
\mathcal{R}\bigg (\:\begin{array}{|c|c|}\hline  1^i & 1^{s-i} \\ \hline 2^i & 3^{s-i}\\ \hline \end{array}\:\otimes u_1\: \bigg )
	&=&\begin{array}{|c|}\hline 1\\ \hline 3\\ \hline \end{array}\: \otimes \:  \begin{array}{|c|c|}\hline  1^{i+1} & 1^{s-i-1} \\ \hline 2^{i+1} & 3^{s-i-1} \\ \hline\end{array} \text{ if }i<s,\label{A3}\\
\mathcal{R} (u_s \otimes u_1)
	&=&u_1 \otimes u_s \label{A4}
\end{eqnarray}
and
\begin{eqnarray}
H\bigg (\:\begin{array}{|c|c|}\hline  1^i & 1^{s-i} \\ \hline 2^i & 3^{s-i} \\ \hline\end{array}\:\otimes \:\begin{array}{|c|}\hline 1\\ \hline 3\\ \hline \end{array}\:\bigg)
	&=&-1 \text{ if }i>0,\\
H\bigg (\begin{array}{|c|}\hline   1^{s} \\ \hline 3^s \\ \hline \end{array}\:\otimes \:\begin{array}{|c|}\hline 1\\ \hline 3\\ \hline \end{array}\:\bigg )
	&=&0 ,\\
H\bigg (\:\begin{array}{|c|c|}\hline  1^i & 1^{s-i} \\ \hline2^i & 3^{s-i}\\ \hline \end{array}\:\otimes u_1 \bigg)
	&=&0.
\end{eqnarray}
where, as before, the symbol $b^j$ means that $b$ is repeated $j$ times.  If $k<s$ then 
\begin{eqnarray*}
T_k(p)&=&\mathcal{R}_{L\; L+1}\cdots \mathcal{R}_{23}\mathcal{R}_{12}\bigg (u_k\otimes \:{\begin{array}{|c|}\hline 1\\ \hline 3\\ \hline \end{array}\:}^{\otimes s}\otimes u_1^{\otimes L-s} \bigg )\\
	&=& u_1^{\otimes k}\otimes \: {\begin{array}{|c|}\hline 1\\ \hline 3\\ \hline \end{array}\:}^{\otimes s} \otimes u_1^{\otimes L-k-s} \otimes u_k,
\end{eqnarray*}
and
\begin{eqnarray*}
E_k(p)=-\sum_{i=0}^{L-1}H(u^{(i)}\otimes b_i)=k.
\end{eqnarray*}
Otherwise,
\begin{eqnarray*}
T_k(p)\otimes u_s&=&\mathcal{R}_{L\; L+1}\cdots \mathcal{R}_{23}\mathcal{R}_{12}( u_k\otimes \:{\begin{array}{|c|}\hline 1\\ \hline 3\\ \hline \end{array}\:}^{\otimes s}\otimes u_1^{\otimes L-s})\\
	&=& u_1^{\otimes s}\otimes \:{\begin{array}{|c|}\hline 1\\ \hline 3\\ \hline \end{array}\:}^{\otimes s} \otimes u_1^{\otimes L-2s} \otimes u_k,
\end{eqnarray*}
and
\begin{eqnarray*}
E_s(p)=-\sum_{i=0}^{L-1}H(u^{(i)}\otimes b_i)=s.
\end{eqnarray*}
The result follows because the $\tilde{f}_i,i\neq 0,2$ commute with $T_s$ and preserve $E_s$ (Proposition \ref{commute}).
\end{proof}

We now consider the two-soliton case $$p= \dots [s_1]\dots [s_2]\dots$$ where $s_1 > s_2$.  We can use Proposition \ref{onesol} to associate a two-soliton state $T_r^t(p):=p_t$ at time $t$ with the element $z^{-k_1}b_1 \otimes z^{-k_2}b_2\in \text{Aff}(\widehat{B}^{1,s_1}\times {B}^{1,s_1})\otimes \text{Aff}(\widehat{B}^{1,s_2}\times {B}^{1,s_2}),$ where $k_i:=-\min(r,s_i)t+\gamma_i$, where $\gamma_i$ is the number of `$u_1$'s to the left of $[s_i]$.  If $r>s_2$, then we expect to see the longer soliton catch up with and collide with the shorter one, and, after sufficiently many time steps separate out into another two soliton state.  At the end of this section, we will prove that is the case.  In the following lemmas, we identify elements in  $\text{Aff}(\widehat{B}^{1,s_1}\times {B}^{1,s_1})\otimes \text{Aff}(\widehat{B}^{1,s_2}\times {B}^{1,s_2})$ with their corresponding two-soliton states.

\begin{lemma}[Analogous to Lemma 4.15 in \cite{Yd2}, Lemma 3 in \cite{MOW}]\label{Shift}
Suppose there is a one-soliton state $p=\dots [s]\dots$ corresponding to $z^{-k}((i,s-i),b) \in \text{Aff}(\widehat{B}^{1,s_1}\times {B}^{1,s_1}),0\leq i \leq s.$  Then $T_\natural(p)$ is another one-soliton state, corresponding to $z^{-k}((s,0),b)$, if $i=s$, and $z^{-k-1}((i+1,s-i-1),b)$ otherwise.  We also have $b(p)=\begin{array}{|c|}\hline 1 \\ \hline \end{array}$ if $i=s$ and $b(p)=\begin{array}{|c|}\hline 2 \\ \hline \end{array}$ otherwise. 
\end{lemma}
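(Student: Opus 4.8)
The plan is to prove that $T_\natural$, restricted to one-soliton states, reproduces the $A_1^{(1)}$ time evolution on the $\widehat B^{1,s}$-factor while leaving the internal $D_{n-2}^{(1)}$-factor $b$ untouched. Write $C(a,c)$ for the two-row $B^{2,1}$-column with top entry $a$ and bottom entry $c$, so that $u_1=C(1,2)$, and note that $\Phi:=\mathcal R_{L\,L+1}\cdots\mathcal R_{12}$ sends $\boxed 1\otimes p\mapsto T_\natural(p)\otimes b(p)$ and, being a composite of crystal isomorphisms, commutes with every $\tilde e_i,\tilde f_i$. I will track the carrier as it is passed rightward and then reduce the whole statement, by covariance, to two explicit sweeps.

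First I would tabulate the local carrier moves. By Theorem \ref{RHighest2} and commutation with $\tilde e_i,\tilde f_i$ ($i\neq 0,2$) these reduce to a handful of base values at color $3$: $\mathcal R(\boxed 1\otimes u_1)=u_1\otimes\boxed 1$, $\mathcal R(\boxed 1\otimes C(1,3))=C(1,3)\otimes\boxed 1$, $\mathcal R(\boxed 1\otimes C(2,3))=u_1\otimes\boxed 3$, $\mathcal R(\boxed 3\otimes C(1,3))=C(1,3)\otimes\boxed 3$, $\mathcal R(\boxed 3\otimes u_1)=C(1,3)\otimes\boxed 2$, and $\mathcal R(\boxed 2\otimes u_1)=u_1\otimes\boxed 2$. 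Each is obtained by moving the relevant pair to its $D_n$-highest weight, applying Theorem \ref{RHighest2}, and carrying the image back down with $\tilde f_i$; the generic-color versions $\mathcal R(\boxed 1\otimes C(2,c))=u_1\otimes\boxed c$ and $\mathcal R(\boxed c\otimes u_1)=C(1,c)\otimes\boxed 2$ then follow by commuting with $\tilde f_i$, $i\geq 3$.

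Next I would run the two base computations, having first reduced $b$ to the $D_{n-2}$-highest weight (all bottom entries $3$). This reduction is legitimate because $\tilde e_i$, $i\geq 3$, commute with $\Phi$, satisfy $\tilde e_i(\boxed 1\otimes p)=\boxed 1\otimes\tilde e_i p$, and act on the soliton part of $T_\natural(p)\otimes b(p)$ (as $\varepsilon_i(\boxed 1)=\varepsilon_i(\boxed 2)=0$ for $i\geq 3$), so $b(p)$ is unaffected. For $i=s$ the soliton is $C(1,3)^{\otimes s}$; by the first two moves the carrier passes every column and vacuum site unchanged, giving $T_\natural(p)=p$ with $b(p)=\boxed 1$, i.e.\ $z^{-k}((s,0),b)$. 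For $i=s-1$ the soliton is $C(2,3)\otimes C(1,3)^{\otimes(s-1)}$; the carrier enters as $\boxed 1$, converts the leading $C(2,3)$ into $u_1$ and becomes $\boxed 3$, slips past the remaining $C(1,3)$'s, deposits a fresh $C(1,3)$ on the first trailing vacuum while turning into $\boxed 2$, and passes the rest of the vacuum. The net effect is $C(1,3)^{\otimes s}$ shifted one step right, with $b(p)=\boxed 2$; this is precisely $z^{-k-1}((s,0),b)$.

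All remaining states are reached from these two representatives without further $R$-matrix work. Applying $\tilde f_i$, $i\geq 3$, transports the all-$3$ answer to arbitrary $b$ and, by the same $\varepsilon_i$-vanishing, shows both that $b$ is preserved and that $b(p)$ is unchanged. For $i<s-1$ I would descend from $i=s-1$ by $\tilde f_1$, and here lies the crux: since $\varphi_1(\boxed 1)=1$ while $\varepsilon_1(p)=s-i$ (the number of $2'$ in the $\widehat B^{1,s}$-label, the vacuum being $\varphi_1,\varepsilon_1$-trivial), the tensor rule makes $\tilde f_1$ act on $p$ exactly when $i\leq s-1$ and on the carrier when $i=s$ — this bifurcation is what makes $i=s$ shiftless with carrier $\boxed 1$ and every $i<s$ shifted by one with carrier $\boxed 2$. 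Verifying that $\tilde f_1$ also strikes the soliton part of the output (comparing $\varphi_1$ of the output soliton with $\varepsilon_1(\boxed 2)=1$) then propagates both the label $(i+1,s-i-1)$ and the phase $z^{-k-1}$ to all $i<s$; the cases $n=4,5$ follow the identical three steps with $i_s$ and the internal highest weight replaced by the degenerate maps of Proposition \ref{bijection}. I expect the main obstacle to be exactly this index-$1$ analysis together with extracting the base moves of Step one, since the latter pass through the $\sigma$-automorphism description of the $0$-action, and it is the precise fate of $\tilde f_1$ on $\boxed 1\otimes p$ that simultaneously produces the unit shift and isolates the exceptional case $i=s$.
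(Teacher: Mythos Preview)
Your proof is correct, but it takes a genuinely different route from the paper's. The paper proceeds by direct computation of the local $R$-moves for \emph{arbitrary} soliton letters: in Case~1 it checks $\mathcal R(\boxed 1\otimes C(1,b_k))=C(1,b_k)\otimes\boxed 1$ for every admissible $b_k$, and in Case~2 it records the two generic transitions
\[
\mathcal R(\boxed 1\otimes C(2,b_s))=u_1\otimes\boxed{b_s},\qquad
\mathcal R(\boxed{b_k}\otimes C(c,b_{k-1}))=C(c,b_k)\otimes\boxed{b_{k-1}}\quad(c\in\{1,2\},\ b_k\le b_{k-1}),
\]
which already handle every $i<s$ and every internal label $b$ in one sweep. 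You instead collapse the problem to a minimal pair of representatives (bottom row all $3$'s, and $i\in\{s,s-1\}$) and then transport the answer by covariance: $\tilde f_i$ for $i\ge3$ to reach general $b$, and an inductive descent along $\tilde f_1$ to reach the remaining $i$. The delicate point you isolate --- that $\tilde f_1$ hits the state factor on the input side precisely when $i\le s-1$, and hits the soliton factor on the output side because $\varphi_1(T_\natural(p))=i+1>1=\varepsilon_1(\boxed 2)$ throughout the descent --- is exactly what is needed to push the induction through, and your check of the $\varepsilon_i$-vanishing for $\boxed 1,\boxed 2$ at $i\ge3$ legitimately pins $b(p)$ and the internal label. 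What your approach buys is a cleaner separation between the $A_1^{(1)}$ top-row dynamics and the $D_{n-2}^{(1)}$ bottom-row label, at the cost of the extra index-$1$ bookkeeping; the paper's approach trades that bookkeeping for a larger table of $R$-values but gets every $i<s$ at once without induction.
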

\begin{proof}
We compute $T_\natural \bigg(\:\begin{array}{|c|}\hline 2\\ \hline b_s\\ \hline\end{array}\:\otimes
\: \begin{array}{|c|}\hline 2\\ \hline b_{s-1}\\ \hline\end{array}\: \otimes
\cdots \otimes
\:\begin{array}{|c|}\hline 2\\ \hline b_{s-j}\\ \hline\end{array}\: \otimes
\:\begin{array}{|c|}\hline 1\\ \hline b_{s-j-1}\\ \hline\end{array}\: \otimes
\:\begin{array}{|c|}\hline 1\\ \hline b_{s-j-2}\\ \hline\end{array}\: \otimes
\cdots
\otimes
\:\begin{array}{|c|}\hline 1\\ \hline b_1\\ \hline\end{array}\: \bigg )$ for the two cases $j=0$ and $j>0$.

\emph{Case 1:} $j=0$.  We compute:
$$\widehat{\mathcal{R}}\bigg (\:\begin{array}{|c|}\hline 1 \\ \hline \end{array}\otimes \begin{array}{|c|}\hline 1 \\ \hline b_k\\ \hline \end{array}\: \bigg )=\begin{array}{|c|}\hline 1 \\ \hline b_k \\ \hline \end{array}\otimes \begin{array}{|c|}\hline 1 \\ \hline \end{array}\:,$$
since $b\neq \{1,\overline{2},\overline{1}\}$.  Therefore, we have $T_\natural(p)=p$ and $b(p)=\begin{array}{|c|}\hline 1 \\ \hline \end{array}\:$.

\emph{Case 2:} $j>0$.  We compute:
\begin{eqnarray*}
\widehat{\mathcal{R}}\bigg (\:\begin{array}{|c|}\hline 1 \\ \hline \end{array}\otimes \begin{array}{|c|}\hline 2 \\ \hline b_s\\ \hline \end{array}\: \bigg )&=&\begin{array}{|c|}\hline 1 \\ \hline 2 \\ \hline \end{array}\otimes \begin{array}{|c|}\hline b_s \\ \hline \end{array}\:,\\
\widehat{\mathcal{R}}\bigg (\:\begin{array}{|c|}\hline b_k \\ \hline \end{array}\otimes \begin{array}{|c|}\hline c \\ \hline b_{k-1}\\ \hline \end{array}\: \bigg )&=&\begin{array}{|c|}\hline c \\ \hline b_k \\ \hline \end{array}\otimes \begin{array}{|c|}\hline b_{k-1}\\ \hline \end{array}\:,
\end{eqnarray*}
where $c=1$ or $2$, and $b_k\leq b_{k-1}.$
\end{proof}

Now consider the two soliton state $z^{-k_1}((s_1,0),b)\otimes z^{-k_2}((i, s_2-i),b')$.  Thanks to Lemma \ref{Shift}, the action of $T_\natural$ has no effect on the first soliton, and therefore we have the following corollary.
\begin{corollary}[Analogous to Lemma 4.15 in \cite{Yd2}, Corollary 1 in \cite{MOW}]\label{2Sol}
Suppose we have a two-soliton state $p=\dots [s_1]\dots \cdots \dots [s_2]\dots$ corresponding to $z^{-k_1}((s_1,0)\times b_1)\otimes z^{-k_2}((i,s_2-i)\times b_2) \in \text{Aff}(\widehat{B}^{1,s_1}\times B^{1,s_1})\otimes \text{Aff}(\widehat{B}^{1,s_2}\times B^{1,s_2}),0\leq i \leq s_2.$  Then $T_\natural(p)$ is another two-soliton state, corresponding to $z^{-k_1}((s_1,0),b_1)\otimes z^{-k_2}((s_2,0),b)$, if $i=s_2$, and $z^{-k_1}((s_1,0),b_1) \otimes z^{-k_2-1}((i+1,s_2-i-1),b_2)$ otherwise.  We also have $b(p)=\begin{array}{|c|}\hline 1\\ \hline \end{array}$ if $i=s_1$ and $b(p)=\begin{array}{|c|}\hline 2\\ \hline \end{array}$ otherwise. 
\end{corollary}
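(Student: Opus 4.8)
The plan is to reduce the computation of $T_\natural(p)$ to two successive applications of Lemma \ref{Shift}, one for each soliton, by following the single carrier letter that $T_\natural$ threads through the chain. Recall from \eqref{tn} that $T_\natural(p)\otimes b(p)$ is obtained by inserting the carrier $\begin{array}{|c|}\hline 1\\ \hline\end{array}\in B^{1,1}$ at the left end of $p$ and sweeping it rightward through the maps $\mathcal{R}_{L\,L+1}\cdots \mathcal{R}_{12}$, each of type $B^{1,1}\otimes B^{2,1}\to B^{2,1}\otimes B^{1,1}$. I would therefore record what letter the carrier becomes as it crosses, in order, the leading vacuum block, the first soliton, the intervening vacuum, the second soliton, and the trailing vacuum, and show that the only nontrivial effect occurs at the second soliton.

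First I would cross the vacuum and the first soliton. On each vacuum letter $u_1$ the first line of Theorem \ref{RHighest2} gives $\mathcal{R}\left(\begin{array}{|c|}\hline 1\\ \hline\end{array}\otimes u_1\right)=u_1\otimes \begin{array}{|c|}\hline 1\\ \hline\end{array}$, so the carrier passes every $u_1$ unchanged and re-emerges as $\begin{array}{|c|}\hline 1\\ \hline\end{array}$; hence the leading vacuum has no effect and the carrier meets the first soliton still equal to $\begin{array}{|c|}\hline 1\\ \hline\end{array}$. Because the first soliton is $z^{-k_1}((s_1,0),b_1)$, every one of its columns has top entry $1$, which is exactly the configuration treated in Case 1 of the proof of Lemma \ref{Shift}: the local identity $\mathcal{R}\left(\begin{array}{|c|}\hline 1\\ \hline\end{array}\otimes \begin{array}{|c|}\hline 1\\ \hline b_k\\ \hline\end{array}\right)=\begin{array}{|c|}\hline 1\\ \hline b_k\\ \hline\end{array}\otimes \begin{array}{|c|}\hline 1\\ \hline\end{array}$ transmits the carrier through each column unchanged. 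Consequently the first soliton is fixed with no phase shift, so the first factor stays $z^{-k_1}((s_1,0),b_1)$, and the letter handed to the remainder of the chain is again $\begin{array}{|c|}\hline 1\\ \hline\end{array}$ — identical to the carrier with which $T_\natural$ began.

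It then remains to cross the middle vacuum (trivially, as above) and the second soliton. Since the carrier is still $\begin{array}{|c|}\hline 1\\ \hline\end{array}$, the configuration now facing it is precisely the input of Lemma \ref{Shift} for the soliton $z^{-k_2}((i,s_2-i),b_2)$, so I would invoke that lemma verbatim: if $i=s_2$ the second soliton is fixed, giving $z^{-k_2}((s_2,0),b_2)$ and $b(p)=\begin{array}{|c|}\hline 1\\ \hline\end{array}$, while if $i<s_2$ it advances to $(i+1,s_2-i-1)$ with a unit rightward shift $z^{-k_2}\mapsto z^{-k_2-1}$ and $b(p)=\begin{array}{|c|}\hline 2\\ \hline\end{array}$; the trailing vacuum then transmits $b(p)$ unchanged. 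Composing these steps yields the stated two-soliton image. The crux of the argument — and the only place the hypothesis is used — is the middle step: one must confirm that the carrier reaching the second soliton is still $\begin{array}{|c|}\hline 1\\ \hline\end{array}$ rather than $\begin{array}{|c|}\hline 2\\ \hline\end{array}$, since only then does Lemma \ref{Shift} apply without modification. This is guaranteed solely by the assumption that the first soliton occupies the maximal $A_1$-state $(s_1,0)$; had any $2$ appeared in its top row, the carrier leaving the first soliton could be $\begin{array}{|c|}\hline 2\\ \hline\end{array}$ and the clean reduction to Lemma \ref{Shift} would break down.
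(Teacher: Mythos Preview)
Your argument is correct and follows the same route as the paper: the paper simply notes that, thanks to Lemma \ref{Shift}, $T_\natural$ has no effect on the first soliton (since it is in state $(s_1,0)$), and hence the action on the second soliton reduces to the one-soliton situation of Lemma \ref{Shift}. You have spelled out the carrier's passage through each segment in more detail than the paper, but the substance is identical.
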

Before we prove the main result on scattering of solitons, we first prove several Lemmas relating $\mathcal{R}$ and $T_r,T_\natural.$
\begin{lemma}[Analogous to Lemma 4.17 in \cite{Yd2}, Lemma 4 in \cite{MOW}]\label{RCommute}
Assume that $s_1>s_2$.  For $p=z^{-k_1}((s_1,0),b_1)\otimes z^{-k_2}((i,s_2-i),b_2)\in \text{Aff}(\widehat{B}^{1,s_1}\times B^{1,s_1} )\otimes \text{Aff}(\widehat{B}^{1,s_2}\times {B}^{1,s_2}),0\leq i \leq s_2$ we have
\begin{enumerate}
\item $T_{\natural}(\widehat{\mathcal{R}}^{\text{Aff}}(p))=\widehat{\mathcal{R}}^{\text{Aff}}(T_{\natural}(p))$, and
\item $b(p)=b(\widehat{\mathcal{R}}^{\text{Aff}}(p)),$
\end{enumerate}
where we use the shifted energy function $\widetilde{H}=2s_2+\widehat{H}$ in $\widehat{\mathcal{R}}^{\text{Aff}}$.
\end{lemma}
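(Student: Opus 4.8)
The plan is to reduce both assertions to statements about the $A_1^{(1)}$-factor alone. The key structural observation, already implicit in Lemma \ref{Shift} and Corollary \ref{2Sol}, is that $T_\natural$ advances the $A_1^{(1)}$-coordinate of a soliton, $(i,s-i)\mapsto(i+1,s-i-1)$, together with a unit phase shift, while leaving the $D_{n-2}^{(1)}$-datum $b$ fixed; under the parameterization of Proposition \ref{bijection} it is therefore nothing but the $A_1^{(1)}$ time evolution acting on the $\widehat{B}$-components. Dually, as a map of $A_1^{(1)}\oplus D_{n-2}^{(1)}$-crystals the scattering operator $\widehat{\mathcal{R}}^{\text{Aff}}$ factors into the affine $A_1^{(1)}$ $R$-matrix on the $\widehat{B}$-factors and the combinatorial $D_{n-2}^{(1)}$ $R$-matrix on the $B$-factors, the single shared grading being shifted by $\widetilde{H}=2s_2+\widehat{H}$, which depends only on the $\widehat{B}$-data. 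Since $T_\natural$ touches only the $\widehat{B}$-factors and the grading, both (1) and (2) collapse to purely $A_1^{(1)}$ identities; the $D_{n-2}^{(1)}$-data $b_1,b_2$ are carried along unchanged by $T_\natural$ and scattered by the same map on either side, so they may be set aside.

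First I would compute $\widehat{\mathcal{R}}$ on the relevant $A_1^{(1)}$-pairs explicitly. Because the faster soliton carries the coordinate $(s_1,0)$ and $s_1>s_2$, the element $(s_1,0)\otimes(i,s_2-i)$ is killed by $\tilde{e}_1$, so tracking the weight through the decomposition $\widehat{B}^{1,s_1}\otimes\widehat{B}^{1,s_2}\cong\bigoplus_j\widehat{B}^{1,s_1+s_2-2j}$ gives $\widehat{\mathcal{R}}\big((s_1,0)\otimes(i,s_2-i)\big)=(s_2,0)\otimes(s_1-s_2+i,\,s_2-i)$ at once. A short energy computation from the insertion shape yields $\widehat{H}\big((s_1,0)\otimes(i,s_2-i)\big)=i-s_2$, hence $\widetilde{H}=s_2+i$. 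The decisive feature is that this energy increases by exactly one as $i\mapsto i+1$.

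Next I would apply $T_\natural$ to both $p$ and $\widehat{\mathcal{R}}^{\text{Aff}}(p)$ via Corollary \ref{2Sol}. After scattering the left soliton is $(s_2,0)$, again killed by $\tilde{e}_1$, so $T_\natural$ still passes through it unaffected and acts on the right (length-$s_1$) soliton. Writing the affine $R$-matrix from its definition, one reads off the gradings of $\widehat{\mathcal{R}}^{\text{Aff}}(p)$; advancing $i\mapsto i+1$ before scattering raises $\widetilde{H}$ by one, and this matches the unit phase shift that $T_\natural$ produces after scattering, so that the coordinates and gradings on the two sides coincide, giving (1). For (2), the right soliton of $\widehat{\mathcal{R}}^{\text{Aff}}(p)$ has coordinate $s_1-s_2+i$, which equals its length $s_1$ if and only if $i=s_2$; thus $b(\widehat{\mathcal{R}}^{\text{Aff}}(p))=\begin{array}{|c|}\hline 1\\ \hline\end{array}$ exactly when $b(p)=\begin{array}{|c|}\hline 1\\ \hline\end{array}$, and equals $\begin{array}{|c|}\hline 2\\ \hline\end{array}$ otherwise.

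I expect the main obstacle to be the bookkeeping of the gradings, and in particular the boundary case $i=s_2$, where the case distinction in $T_\natural$ switches: here $(i,s_2-i)=(s_2,0)$ and its scattered partner $(s_1-s_2+i,\,s_2-i)=(s_1,0)$ are both saturated, so $T_\natural$ fixes each with no phase shift and both sides reduce to $\widehat{\mathcal{R}}^{\text{Aff}}(p)$ itself. One must verify that this degenerate behaviour is consistent with the generic computation, and that the additive constant $2s_2$ in $\widetilde{H}$, while irrelevant to the commutation (only the slope of $\widetilde{H}$ in $i$ enters), is the value required for compatibility with the scattering theorem to follow. Conceptually the whole commutation is an instance of the Yang--Baxter equation \eqref{YBE} for the underlying $R$-matrices, which is why such an identity is to be expected.
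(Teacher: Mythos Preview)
Your approach is correct and is essentially the same direct two-case computation the paper carries out: apply $\widehat{\mathcal{R}}^{\text{Aff}}$ and $T_\natural$ in both orders using Corollary~\ref{2Sol} and compare, the case split being $i=s_2$ versus $i<s_2$. Your explicit formulas $\widehat{\mathcal{R}}\big((s_1,0)\otimes(i,s_2-i)\big)=(s_2,0)\otimes(s_1-s_2+i,\,s_2-i)$ and the slope-one behaviour $\widetilde H(i+1)=\widetilde H(i)+1$ are exactly the ingredients the paper uses implicitly (indeed, the paper's displayed computation contains the same $(s_1-s_2+i,s_2-i)$ up to a sign typo). One small imprecision: $\widetilde H=2s_2+\widehat H$ is the energy for the full $\widehat{B}^{1,s}\times B^{1,s}$ crystal and therefore also carries a $D_{n-2}^{(1)}$ contribution from $b_1\otimes b_2$; however, since that contribution is independent of $i$ and $T_\natural$ leaves $b_1,b_2$ untouched, it cancels on both sides, so your reduction to the $A_1^{(1)}$ factor is valid.
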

\begin{proof}
As in Lemma \ref{Shift} we have the following two cases: $i=s_2$ and $i<s_2$.

\emph{Case 1:} We compute
\begin{eqnarray*}
\lefteqn{T_{\natural}(\widehat{\mathcal{R}}^{\text{Aff}}(z^{-k_1}((s_1,0),b_1)\otimes z^{-k_2}((s_2,0),b_2)))}\qquad\\
&=&T_{\natural}(z^{-k_2+\widetilde{H}}((s_2,0),\tilde{b}_2)\otimes z^{-k_1-\widetilde{H}}((s_1,0),\tilde{b}_1))\\
&=&z^{-k_2+\widetilde{H}}((s_2,0),\tilde{b}_2)\otimes z^{-k_1-\widetilde{H}}((s_1,0),\tilde{b}_1)
\end{eqnarray*}
and,
\begin{eqnarray*}
\lefteqn{\widehat{\mathcal{R}}^{\text{Aff}}(T_{\natural}((z^{-k_1}((s_1,0),b_1)\otimes z^{-k_2}((s_2,0),b_2)))}\qquad\\
&=&\widehat{\mathcal{R}}^{\text{Aff}}(z^{-k_2}((s_2,0),\tilde{b}_2)\otimes z^{-k_1}((s_1,0),\tilde{b}_1))\\
&=&z^{-k_2+\widetilde{H}}((s_2,0),\tilde{b}_2)\otimes z^{-k_1-\widetilde{H}}((s_1,0),\tilde{b}_1)
\end{eqnarray*}
where $\tilde{b}_2\otimes \tilde{b}_1=\mathcal{R}(b_1\otimes b_2).$

\emph{Case 2:}
We compute
\begin{eqnarray*}
\lefteqn{T_{\natural}(\widehat{\mathcal{R}}^{\text{Aff}}(z^{-k_1}((s_1,0),b_1)\otimes z^{-k_2}((i,s_2-i),b_2)))}\qquad\\
&=&T_{\natural}(z^{-k_2+\widetilde{H}}((s_2,0),\tilde{b}_2)\otimes z^{-k_1-\widetilde{H}}((s_1-s_2-i,s_2-i),\tilde{b}_1))\\
&=&z^{-k_2+\widetilde{H}}((s_2,0),\tilde{b}_2)\otimes z^{-k_1-\widetilde{H}-1}((s_1-s_2-i+1,s_2-i-1),\tilde{b}_1)
\end{eqnarray*}
and,
\begin{eqnarray*}
\lefteqn{\widehat{\mathcal{R}}^{\text{Aff}}(T_{\natural}((z^{-k_1}((s_1,0),b_1)\otimes z^{-k_2}((i,s_2-i),b_2)))}\qquad\\
&=&\widehat{\mathcal{R}}^{\text{Aff}}(z^{-k_2}((s_2,0),\tilde{b}_2)\otimes z^{-k_1-1}((i+1,s_2-i-1),\tilde{b}_1))\\
&=&z^{-k_2+\widetilde{H}}((s_2,0),\tilde{b}_2)\otimes z^{-k_1-\widetilde{H}-1}((s_1-s_2-i+1,s_2-i-1),\tilde{b}_1)
\end{eqnarray*}
where $\tilde{b}_2\otimes \tilde{b}_1=\mathcal{R}(b_1\otimes b_2).$
\end{proof}
The proof of the following is the same as that of Lemma 5 in \cite{MOW}.
\begin{lemma}[Analogous to Lemma 4.18 in \cite{Yd2}, Lemma 5 in \cite{MOW}]\label{TCommute}
Let $p\in \mathcal{P}_L,l>0,L\gg 0.$  Then
\begin{enumerate}
\item$T_{\natural}(T_l(p))=T_l(T_\natural(p))$,
\item $b(T_l(p))\otimes u_l=\mathcal{R}(u_l\otimes b(p))$.
\end{enumerate}
\end{lemma}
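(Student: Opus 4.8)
The plan is to prove (1) and (2) together by a single Yang--Baxter computation, following verbatim the proof of Lemma 5 in \cite{MOW}. Both $T_l$ and $T_\natural$ are transfer-type operators built from the combinatorial $R$-matrix: by \eqref{ts} the operator $T_l$ carries the auxiliary element $u_l\in B^{2,l}$ rightward through the entire chain $p$, and by \eqref{tn} the operator $T_\natural$ carries the element $v=\begin{array}{|c|}\hline 1\\ \hline\end{array}\in B^{1,1}$ rightward through $p$. I would work on the enlarged space $B^{1,1}\otimes B^{2,l}\otimes(B^{2,1})^{\otimes L}$, placing the two carriers $v$ and $u_l$ in the first two slots and the state $p$ in the remaining $L$ slots, and compare the outcomes of driving \emph{both} carriers past all of $p$ in two different orders.

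First I would record the boundary identity $\mathcal{R}(v\otimes u_l)=u_l\otimes v$ for $\mathcal{R}\colon B^{1,1}\otimes B^{2,l}\to B^{2,l}\otimes B^{1,1}$. Indeed $v\otimes u_l$ is a $D_n$ highest weight vector of classical weight $\Lambda_1+l\Lambda_2$ (both factors are highest weight and $\varepsilon_i(u_l)=0$ forces $\tilde e_i$ to act on the left factor by Definition \ref{tensor}), while $u_l\otimes v$ is the unique highest weight vector of this weight in $B^{2,l}\otimes B^{1,1}$; since $\mathcal{R}$ is a weight-preserving crystal isomorphism it must identify the two. Starting from $v\otimes u_l\otimes p$, driving $u_l$ rightward through $p$ and then $v$ rightward through the result gives, by \eqref{ts} and \eqref{tn},
\begin{equation*}
v\otimes u_l\otimes p\ \longmapsto\ T_\natural(T_l(p))\otimes b(T_l(p))\otimes u_l,
\end{equation*}
whereas first swapping the two carriers by $\mathcal{R}(v\otimes u_l)=u_l\otimes v$, then driving $v$ rightward through $p$ and finally $u_l$ rightward through $T_\natural(p)$, gives
\begin{equation*}
v\otimes u_l\otimes p\ \longmapsto\ T_l(T_\natural(p))\otimes u_l\otimes b(p).
\end{equation*}
Here the exiting carriers $u_l$ are produced from the tail of $u_1$'s in $p$ (and in $T_\natural(p)$) via $\mathcal{R}(u_l\otimes u_1)=u_1\otimes u_l$ (Theorem \ref{RMatrix}), which is where $L\gg0$ enters.

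The two outcomes are related by the Yang--Baxter equation \eqref{YBE}: the carrier--carrier crossing $\mathcal{R}\colon B^{1,1}\otimes B^{2,l}\to B^{2,l}\otimes B^{1,1}$ used in the second computation can be slid past each of the $L$ state sites in turn, one instance of \eqref{YBE} for the triple (carrier $B^{1,1}$, carrier $B^{2,l}$, site $B^{2,1}$) per site. Pushing it all the way to the right yields the intertwining identity $\check{\mathcal{R}}_{\mathrm{right}}\big(T_\natural(T_l(p))\otimes b(T_l(p))\otimes u_l\big)=T_l(T_\natural(p))\otimes u_l\otimes b(p)$, where $\check{\mathcal{R}}_{\mathrm{right}}$ applies $\mathcal{R}\colon B^{1,1}\otimes B^{2,l}\to B^{2,l}\otimes B^{1,1}$ to the two right-hand carrier slots; concretely this gives
\begin{equation*}
T_\natural(T_l(p))=T_l(T_\natural(p)),\qquad \mathcal{R}\big(b(T_l(p))\otimes u_l\big)=u_l\otimes b(p).
\end{equation*}
The first (state) equation is exactly (1). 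Applying the inverse $R$-matrix $B^{2,l}\otimes B^{1,1}\to B^{1,1}\otimes B^{2,l}$ to the second (carrier) equation turns it into $\mathcal{R}(u_l\otimes b(p))=b(T_l(p))\otimes u_l$, which is (2).

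The main obstacle is purely organizational: keeping track, through the repeated applications of \eqref{YBE}, of which carrier occupies which slot and of the boundary behavior at both ends of the chain. Concretely, I must invoke $\mathcal{R}(u_l\otimes u_1)=u_1\otimes u_l$ (Theorem \ref{RMatrix}) and $\mathcal{R}(v\otimes u_1)=u_1\otimes v$ (Theorem \ref{RHighest2}) to ensure the carriers exit on the right as the claimed elements, which relies on $p\in\mathcal{P}_L$ having a tail of $u_1$'s together with $L\gg0$. Once the enlarged-space setup and the left-boundary identity $\mathcal{R}(v\otimes u_l)=u_l\otimes v$ are in place, the remaining manipulation is the routine Yang--Baxter sliding carried out identically in Lemma 5 of \cite{MOW}.
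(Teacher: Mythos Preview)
Your argument is correct and is precisely the Yang--Baxter sliding proof that the paper invokes by reference to Lemma~5 of \cite{MOW} (the paper gives no independent proof here). Your verification of the carrier--carrier boundary identity $\mathcal{R}(v\otimes u_l)=u_l\otimes v$ and your use of the tail identities $\mathcal{R}(u_l\otimes u_1)=u_1\otimes u_l$, $\mathcal{R}(v\otimes u_1)=u_1\otimes v$ supply exactly the ingredients that the cited argument needs.
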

Now we are ready to prove the main result:
\begin{theorem} \label{th:main}
Let $p=\dots [s_1] \dots [s_2]\dots\in \mathcal{P}_L$ be a two-soliton state with $s_1>s_2$, corresponding to $z^{k_1}b_1\otimes z^{k_2}b_2$ where $k_1,k_2\leq 0$.  Then after sufficiently many applications of $T_r, r>s_2$ the new state is given by 
$$\widehat{\mathcal{R}}^{\text{Aff}}(z^{k_1}b_1\otimes z^{k_2}b_2)=z^{k_2'}\tilde{b}_2\otimes z^{k_1'}\tilde{b}_1$$ 
with phase shift
$$k_2'-k_2=k_1-k_1'=2l_2+\widehat{H}(b_1\otimes b_2).$$
\end{theorem}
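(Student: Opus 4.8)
The plan is to show that the scattering map induced by iterating $T_r$ agrees with the affinized combinatorial $R$-matrix $\widehat{\mathcal{R}}^{\text{Aff}}$ (with the shifted energy $\widetilde{H}=2s_2+\widehat{H}$ of Lemma \ref{RCommute}), after which the phase-shift formula is read off directly from the definition of the affinized $R$-matrix. First I would establish \emph{separation}: a one-soliton state of length $s_i$ propagates under $T_r$ by $\min(r,s_i)$ lattice steps per time step, and since $r>s_2$ together with $s_1>s_2$ forces $\min(r,s_1)>\min(r,s_2)=s_2$, the length-$s_1$ soliton strictly overtakes the length-$s_2$ one. Hence for $t\gg 0$ the state $T_r^{t}(p)$ is again a two-soliton state, now in reversed spatial order, and this defines a scattering map $S\colon \mathrm{Aff}(\widehat{B}^{1,s_1}\times B^{1,s_1})\otimes \mathrm{Aff}(\widehat{B}^{1,s_2}\times B^{1,s_2})\to \mathrm{Aff}(\widehat{B}^{1,s_2}\times B^{1,s_2})\otimes \mathrm{Aff}(\widehat{B}^{1,s_1}\times B^{1,s_1})$ recording the internal data of $T_r^{t}(p)$. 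The goal becomes $S=\widehat{\mathcal{R}}^{\text{Aff}}$.

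Next I would verify that $S$ and $\widehat{\mathcal{R}}^{\text{Aff}}$ satisfy the same intertwining relations, so that both are affine crystal isomorphisms of the same type. By Proposition \ref{commute} the operators $\tilde{e}_i,\tilde{f}_i$ with $i\neq 0,2$ commute with $T_r$ and preserve $E_r$, hence commute with $S$; under the identification $i_s$ of Proposition \ref{bijection} these are exactly the classical $A_1^{(1)}\oplus D_{n-2}^{(1)}$ crystal operators (index $1$ giving the $\widehat{B}$-action, indices $3,\dots,n$ giving the $B^{1,s}$-action), and $\widehat{\mathcal{R}}$ commutes with them since it is a crystal isomorphism. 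For the affine $A_1^{(1)}$ direction, Lemma \ref{TCommute}(1) gives $T_\natural T_r=T_r T_\natural$, so $T_\natural$ commutes with $S$, while Lemma \ref{RCommute}(1) gives the same for $\widehat{\mathcal{R}}^{\text{Aff}}$; since $T_\natural$ realizes the $z$-shift on the $\widehat{B}$-factors (Lemma \ref{Shift}, Corollary \ref{2Sol}), both maps respect the affine grading.

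With these relations in hand I would invoke the uniqueness of the combinatorial $R$-matrix: any affine crystal isomorphism between these two tensor products is unique up to an overall additive constant in the energy, so $S$ and $\widehat{\mathcal{R}}^{\text{Aff}}$ agree on internal states up to a global $z$-shift. The remaining constant is fixed by evaluating both maps on one explicit configuration, most conveniently the two-soliton state in which each soliton is the $D_n$ highest weight one-soliton ${\begin{array}{|c|}\hline 1\\ \hline 3\\ \hline\end{array}}^{\otimes s_i}$, where the collision is computed directly from \eqref{A1}--\eqref{A4} and Theorem \ref{RMatrix}. This pins the constant and yields $S=\widehat{\mathcal{R}}^{\text{Aff}}$ with energy $\widetilde{H}=2s_2+\widehat{H}$. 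Unwinding the definition $\widehat{\mathcal{R}}^{\text{Aff}}(z^{k_1}b_1\otimes z^{k_2}b_2)=z^{k_2+\widetilde{H}(b_1\otimes b_2)}\tilde{b}_2\otimes z^{k_1-\widetilde{H}(b_1\otimes b_2)}\tilde{b}_1$ then gives $k_2'-k_2=k_1-k_1'=\widetilde{H}(b_1\otimes b_2)=2s_2+\widehat{H}(b_1\otimes b_2)$, which is the asserted phase shift with $l_2=s_2$.

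The step I expect to be the main obstacle is the reduction together with the base-case computation. One must argue carefully that commutation with $\{\tilde{e}_i,\tilde{f}_i: i\neq 0,2\}$ and with $T_\natural$ truly forces $S$ to be the \emph{same} affine isomorphism as $\widehat{\mathcal{R}}^{\text{Aff}}$; this requires the connectedness of the affine crystals under these operators so that agreement at a single highest weight element propagates everywhere. In parallel, the explicit evaluation on the highest-weight two-soliton state demands patient bookkeeping of the repeated $R$-matrix applications and the accumulated energy in order to fix the additive constant correctly.
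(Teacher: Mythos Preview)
Your overall architecture---reduce by $\tilde e_i,\tilde f_i$ for $i\neq 0,2$, then use $T_\natural$ for the $A_1^{(1)}$ affine direction---matches the paper's, but there are two genuine gaps.

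First, \emph{separation is not something you can assume}. The speed argument ``$\min(r,s_1)>\min(r,s_2)$'' only applies while the solitons are far apart; once they collide the state is no longer a two-soliton state and you have no a priori control. That $T_r^t(p)$ eventually re-separates into two solitons of the \emph{same} lengths $s_1,s_2$ is precisely the content of the theorem, and the paper proves it simultaneously with the scattering rule, by induction on $y_2$ (the induction hypothesis includes that $T_r^t(T_\natural(p))$ is a two-soliton state, and inverting $T_\natural$ transports this to $p$).

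Second, and more seriously, your uniqueness step does not go through. The operators you have shown commute with $S$ are $\tilde e_1$ (giving the $A_1$ index $1$), $T_\natural$ (giving the $A_1$ index $0$), and $\tilde e_3,\dots,\tilde e_n$ (giving the \emph{classical} $D_{n-2}$ indices $1,\dots,n-2$). You have \emph{no} operator on the automaton side that realizes the $D_{n-2}^{(1)}$ $0$-arrow on the soliton labels, so you have only shown $S$ is an $A_1^{(1)}\oplus D_{n-2}$-crystal morphism, not an $A_1^{(1)}\oplus D_{n-2}^{(1)}$-crystal isomorphism. Under these operators the tensor product $B^{1,s_1}\otimes B^{1,s_2}$ is \emph{not} connected---it has many classical $D_{n-2}$ highest weight vectors---so checking the single configuration ${\begin{smallmatrix}1\\3\end{smallmatrix}}^{\otimes s_i}$ cannot pin down $S$. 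The paper resolves this not by a uniqueness argument but by taking $y_2=0$ as the base of the induction: when both top rows are all $1$'s the dynamics is literally that of the $D_{n-1}^{(1)}$ (resp.\ $A_3^{(1)}$ when $n=4$) automaton on $B^{1,s}$, and the scattering rule there is already known from \cite{HKOTY3} (resp.\ \cite{Yd1}), which handles \emph{all} $D_{n-2}$ highest weight data at once. Your ``base-case computation'' needs to be replaced by this citation; after that, the inductive step via Lemmas \ref{RCommute} and \ref{TCommute} and the invertibility of the maps in \eqref{tn} is exactly what the paper does.
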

\begin{proof} 
By Proposition \ref{commute}, $T_r$ commutes with $\tilde{e}_i, \tilde{f}_i,i\neq 0,2$.  Thus, it is enough to check the scattering rule for the highest weight elements 
$$
z^{k_1}((s_1,0),b)\otimes z^{k_2}((y_1, y_2),b')\in \begin{cases}\text{Aff}((\widehat{B}^{1,s_1})^3)\otimes \text{Aff}((\widehat{B}^{1,s_2})^3), & \text{if }n=4,\\
\text{Aff}(\widehat{B}^{1,s_1}\times \widehat{B}^{2,s_1})\otimes \text{Aff}(\widehat{B}^{1,s_2}\times \widehat{B}^{2,s_2}),&\text{if }n=5,\\
\text{Aff}(\widehat{B}^{1,s_1}\times B^{1,s_1})\otimes \text{Aff}(\widehat{B}^{1,s_2}\times B^{1,s_2})& \text{if }n>5.
\end{cases}.
$$
We will show the statement is true by induction on $y_2$.

Suppose $y_2=0$.  Then $y_1=s_2.$  In this case, the time evolution operator is equivalent to that of the $D_{n-1}^{(1)}$ SCA associated to the crystal $B^{1,s}$ if $n>5$, or the $A_3^{(1)}$-crystal $B^{2,s}$ if $n=4$.
Therefore, by the main results in \cite{HKOTY3} (resp. \cite{Yd1} when $n=4$) the scattering rule is the same as for the $D_{n-1}^{(1)}$ (resp. $A_3^{(1)}$) case, namely:
$$
T_r^{t}(p)=\widehat{\mathcal{R}}^{\text{Aff}}(p),
$$
for sufficiently large $t$.

Assume the statement is true for $<y_2$.  By corollary 1, $T_\natural(p)=z^{k_1}((s_1,0),b)\otimes z^{k_2-1}((y_1+1,y_2-1),b')$, so the inductive assumption holds.  Therefore, $T_r^t(T_\natural(p))$ is a 2-soliton state, for sufficiently large $t$, and 
$$
T_r^t(T_\natural(p))=\widehat{\mathcal{R}}^{\text{Aff}}(T_\natural(p)).
$$
Therefore, by Lemmas \ref{RCommute} and \ref{TCommute}, we have
$$
T_\natural(T_r^t(p))=T_\natural(\widehat{\mathcal{R}}^{\text{Aff}}(p)).
$$
Also, by Lemma \ref{RCommute}, we have $b(\widehat{\mathcal{R}}^{\text{Aff}}(p))=b(p),$ and by Corollary \ref{2Sol} $b(p)=\:\begin{array}{|c|}\hline2\\ \hline \end{array}\:$ since $p$ is a 2-soliton state and $y_2>0$.  By Lemma \ref{TCommute}, $b(T_r(p))\otimes u_r=\mathcal{R}(u_r\otimes b(p))=\mathcal{R}\big (u_r\otimes \:\begin{array}{|c|}\hline2\\ \hline \end{array}\:\big )=\:\begin{array}{|c|}\hline2\\ \hline \end{array}\: \otimes u_r$.  Therefore $b(T_r(p))=\:\begin{array}{|c|}\hline2\\ \hline \end{array}\:$ and, by repeated application of Lemma \ref{RCommute}, we have  $b(T_r^t(p))=\:\begin{array}{|c|}\hline2\\ \hline \end{array}\:$.  Therefore, $b(\widehat{\mathcal{R}}^{\text{Aff}}(p))=b(T_r^t(p))=\:\begin{array}{|c|}\hline2\\ \hline \end{array}\:$, and $T_\natural(T_r^t(p))=T_\natural(\widehat{\mathcal{R}}^{\text{Aff}}(p)).$  Therefore, the crystal maps in \eqref{tn} can be inverted to yield
$$
T_r^t(p)=\widehat{\mathcal{R}}^{\text{Aff}}(p)
$$
is a 2-soliton state.

\end{proof}

\subsection{Examples of scattering of $D_n^{(1)}$-solitons}
Here we give some examples of $D_n^{(1)}$ soliton scattering, for different $n$.

\noindent\emph{Example:} $n=4:$
$$
\begin{array}{cc}
t=0:&
\begin{array}{lllllllllllllllllllllllllll}
 1 & 1 & 1 & 1 & 1 & 2 & 2 & 1 & 1 & 1 & 1 & 1 & 1 & 1 & 1 & 1 & 1 & 1 & 1 & 1 & 1 & 1 & 1 & 1 & 1 & 1 & 1 \\
 \bar{3} & \bar{4} & \bar{4} & 2 & 2 & 4 & 3 & 2 & 2 & 2 & 2 & 2 & 2 & 2 & 2 & 2 & 2 & 2 & 2 & 2 & 2 & 2 & 2 & 2 & 2 & 2 & 2 \\
\end{array}\vspace{1mm} \\
t=1:&
\begin{array}{lllllllllllllllllllllllllll}
 1 & 1 & 1 & 1 & 1 & 1 & 1 & 2 & 2 & 1 & 1 & 1 & 1 & 1 & 1 & 1 & 1 & 1 & 1 & 1 & 1 & 1 & 1 & 1 & 1 & 1 & 1 \\
 2 & 2 & 2 & \bar{3} & \bar{4} & \bar{4} & 2 & 4 & 3 & 2 & 2 & 2 & 2 & 2 & 2 & 2 & 2 & 2 & 2 & 2 & 2 & 2 & 2 & 2 & 2 & 2 & 2 \\
\end{array}\vspace{1mm} \\
t=2:&
\begin{array}{lllllllllllllllllllllllllll}
 1 & 1 & 1 & 1 & 1 & 1 & 1 & 1 & 1 & 2 & 2 & 1 & 1 & 1 & 1 & 1 & 1 & 1 & 1 & 1 & 1 & 1 & 1 & 1 & 1 & 1 & 1 \\
 2 & 2 & 2 & 2 & 2 & 2 & \bar{3} & \bar{4} & \bar{4} & 4 & 3 & 2 & 2 & 2 & 2 & 2 & 2 & 2 & 2 & 2 & 2 & 2 & 2 & 2 & 2 & 2 & 2 \\
\end{array}\vspace{1mm} \\
t=3:&
\begin{array}{lllllllllllllllllllllllllll}
 1 & 1 & 1 & 1 & 1 & 1 & 1 & 1 & 1 & 1 & 1 & \bar{4} & 2 & 1 & 1 & 1 & 1 & 1 & 1 & 1 & 1 & 1 & 1 & 1 & 1 & 1 & 1 \\
 2 & 2 & 2 & 2 & 2 & 2 & 2 & 2 & 2 & \bar{3} & \bar{4} & 4 & 3 & 2 & 2 & 2 & 2 & 2 & 2 & 2 & 2 & 2 & 2 & 2 & 2 & 2 & 2 \\
\end{array}\vspace{1mm} \\
t=4:&
\begin{array}{lllllllllllllllllllllllllll}
 1 & 1 & 1 & 1 & 1 & 1 & 1 & 1 & 1 & 1 & 1 & 1 & 1 & \bar{4} & 2 & 1 & 1 & 1 & 1 & 1 & 1 & 1 & 1 & 1 & 1 & 1 & 1 \\
 2 & 2 & 2 & 2 & 2 & 2 & 2 & 2 & 2 & 2 & 2 & 2 & \bar{3} & \bar{3} & 3 & 3 & 2 & 2 & 2 & 2 & 2 & 2 & 2 & 2 & 2 & 2 & 2 \\
\end{array}\vspace{1mm} \\
t=5:&
\begin{array}{lllllllllllllllllllllllllll}
 1 & 1 & 1 & 1 & 1 & 1 & 1 & 1 & 1 & 1 & 1 & 1 & 1 & 1 & 1 & 1 & 2 & 2 & 1 & 1 & 1 & 1 & 1 & 1 & 1 & 1 & 1 \\
 2 & 2 & 2 & 2 & 2 & 2 & 2 & 2 & 2 & 2 & 2 & 2 & 2 & 2 & \bar{3} & \bar{4} & \bar{3} & 3 & 3 & 2 & 2 & 2 & 2 & 2 & 2 & 2 & 2 \\
\end{array}\vspace{1mm} \\
t=6:&
\begin{array}{lllllllllllllllllllllllllll}
 1 & 1 & 1 & 1 & 1 & 1 & 1 & 1 & 1 & 1 & 1 & 1 & 1 & 1 & 1 & 1 & 1 & 1 & 1 & 2 & 2 & 1 & 1 & 1 & 1 & 1 & 1 \\
 2 & 2 & 2 & 2 & 2 & 2 & 2 & 2 & 2 & 2 & 2 & 2 & 2 & 2 & 2 & 2 & \bar{3} & \bar{4} & 2 & \bar{3} & 3 & 3 & 2 & 2 & 2 & 2 & 2 \\
\end{array}\vspace{1mm} \\
t=7:&
\begin{array}{lllllllllllllllllllllllllll}
 1 & 1 & 1 & 1 & 1 & 1 & 1 & 1 & 1 & 1 & 1 & 1 & 1 & 1 & 1 & 1 & 1 & 1 & 1 & 1 & 1 & 1 & 2 & 2 & 1 & 1 & 1 \\
 2 & 2 & 2 & 2 & 2 & 2 & 2 & 2 & 2 & 2 & 2 & 2 & 2 & 2 & 2 & 2 & 2 & 2 & \bar{3} & \bar{4} & 2 & 2 & \bar{3} & 3 & 3 & 2 & 2 \\
\end{array}
\end{array}
$$
The initial state corresponds to $z^0((3,0),(0,3),(2,1))\otimes z^{-5}(0,2),(2,0),(1,1)).$  We compute the $R$-matrix of $(A_1^{(1)})^{\oplus 3}$-crystals as follows: $\mathcal{R}^{\text{aff}}(z^0((3,0),(0,3),(2,1))\otimes z^{-5}(0,2),(2,0),(1,1)))=z^{-4}((2,0),(1,1),(0,2))\otimes z^{-1}((1,2),(2,1),(2,1))$. According to Theorem \ref{th:main} we expect this to correspond to the final state, which is the case.

\noindent\emph{Example:} $n=5:$
$$
\begin{array}{cc}
t=0:&\begin{array}{lllllllllllllllllllllllllll}
 2 & 2 & 1 & 1 & 1 & 1 & 2 & 1 & 1 & 1 & 1 & 1 & 1 & 1 & 1 & 1 & 1 & 1 & 1 & 1 & 1 & 1 & 1 & 1 & 1 & 1 & 1 \\
 \bar{3} & 5 & 4 & 3 & 2 & 2 & \bar{4} & \bar{5} & 2 & 2 & 2 & 2 & 2 & 2 & 2 & 2 & 2 & 2 & 2 & 2 & 2 & 2 & 2 & 2 & 2 & 2 & 2 \\
\end{array}\vspace{1mm} \\
t=1:&\begin{array}{lllllllllllllllllllllllllll}
 1 & 1 & 1 & 1 & 2 & 2 & 1 & 1 & 2 & 1 & 1 & 1 & 1 & 1 & 1 & 1 & 1 & 1 & 1 & 1 & 1 & 1 & 1 & 1 & 1 & 1 & 1 \\
 2 & 2 & 2 & 2 & \bar{3} & 5 & 4 & 3 & \bar{4} & \bar{5} & 2 & 2 & 2 & 2 & 2 & 2 & 2 & 2 & 2 & 2 & 2 & 2 & 2 & 2 & 2 & 2 & 2 \\
\end{array}\vspace{1mm} \\
t=2:&\begin{array}{lllllllllllllllllllllllllll}
 1 & 1 & 1 & 1 & 1 & 1 & 1 & 1 & 2 & 2 & 4 & 1 & 1 & 1 & 1 & 1 & 1 & 1 & 1 & 1 & 1 & 1 & 1 & 1 & 1 & 1 & 1 \\
 2 & 2 & 2 & 2 & 2 & 2 & 2 & 2 & \bar{3} & 5 & \bar{4} & \bar{5} & 3 & 2 & 2 & 2 & 2 & 2 & 2 & 2 & 2 & 2 & 2 & 2 & 2 & 2 & 2 \\
\end{array}\vspace{1mm} \\
t=3:&\begin{array}{lllllllllllllllllllllllllll}
 1 & 1 & 1 & 1 & 1 & 1 & 1 & 1 & 1 & 1 & 1 & 2 & 1 & 2 & 2 & 1 & 1 & 1 & 1 & 1 & 1 & 1 & 1 & 1 & 1 & 1 & 1 \\
 2 & 2 & 2 & 2 & 2 & 2 & 2 & 2 & 2 & 2 & 2 & 5 & 4 & \bar{3} & \bar{4} & \bar{5} & 3 & 2 & 2 & 2 & 2 & 2 & 2 & 2 & 2 & 2 & 2 \\
\end{array}\vspace{1mm} \\
t=4:&\begin{array}{lllllllllllllllllllllllllll}
 1 & 1 & 1 & 1 & 1 & 1 & 1 & 1 & 1 & 1 & 1 & 1 & 1 & 2 & 1 & 1 & 1 & 2 & 2 & 1 & 1 & 1 & 1 & 1 & 1 & 1 & 1 \\
 2 & 2 & 2 & 2 & 2 & 2 & 2 & 2 & 2 & 2 & 2 & 2 & 2 & 5 & 4 & 2 & 2 & \bar{3} & \bar{4} & \bar{5} & 3 & 2 & 2 & 2 & 2 & 2 & 2 \\
\end{array}
\end{array}
$$
In this case, the initial state corresponds to $z^0\bigg ((2,2),\begin{pmatrix} 2&1&1&0\\ 0&1&2&1 \end{pmatrix}\bigg )\otimes z^{-6}\bigg ((1,1),\\\begin{pmatrix} 1&1&0&0 \\0&0&0&2 \end{pmatrix} \bigg ).$  We compute the $R$-matrix for $A_1^{(1)}\oplus A_3^{(1)}$-crystals as follows: $\mathcal{R}^{\text{Aff}}\big (z^0\bigg ((2,2),\begin{pmatrix} 2&1&1&0\\ 0&1&2&1 \end{pmatrix}\bigg )\otimes z^{-6}\bigg ((1,1),\begin{pmatrix} 1&1&0&0 \\0&0&0&2 \end{pmatrix}\bigg ) \big )=z^{-5}\bigg ((1,1),\begin{pmatrix} 1&1&0&0&\\0&0&2&0 \end{pmatrix} \bigg )\otimes z^{-1}\bigg ((2,2),\begin{pmatrix} 2&1&1&0\\0&1&0&3 \end{pmatrix} \bigg )$, which again confirms Theorem \ref{th:main}.

\noindent \emph{Example:} $n=6:$
$$
\begin{array}{cc}
t=0:&\begin{array}{lllllllllllllllllllllllllll}
 2 & 2 & 1 & 1 & 1 & 1 & 1 & 2 & 2 & 1 & 1 & 1 & 1 & 1 & 1 & 1 & 1 & 1 & 1 & 1 & 1 & 1 & 1 & 1 & 1 & 1 & 1 \\
 \bar{3} & \bar{5} & 6 & 5 & 4 & 2 & 2 & \bar{5} & \bar{5} & 2 & 2 & 2 & 2 & 2 & 2 & 2 & 2 & 2 & 2 & 2 & 2 & 2 & 2 & 2 & 2 & 2 & 2 \\
\end{array}\vspace{1mm} \\
t=1:&\begin{array}{lllllllllllllllllllllllllll}
 1 & 1 & 1 & 1 & 1 & 2 & 2 & 1 & 1 & 4 & 2 & 1 & 1 & 1 & 1 & 1 & 1 & 1 & 1 & 1 & 1 & 1 & 1 & 1 & 1 & 1 & 1 \\
 2 & 2 & 2 & 2 & 2 & \bar{3} & \bar{5} & 6 & 5 & \bar{5} & \bar{5} & 2 & 2 & 2 & 2 & 2 & 2 & 2 & 2 & 2 & 2 & 2 & 2 & 2 & 2 & 2 & 2 \\
\end{array}\vspace{1mm} \\
t=2:&\begin{array}{lllllllllllllllllllllllllll}
 1 & 1 & 1 & 1 & 1 & 1 & 1 & 1 & 1 & 1 & 2 & \bar{5} & 6 & 2 & 1 & 1 & 1 & 1 & 1 & 1 & 1 & 1 & 1 & 1 & 1 & 1 & 1 \\
 2 & 2 & 2 & 2 & 2 & 2 & 2 & 2 & 2 & 2 & \bar{3} & \bar{4} & \bar{5} & 4 & 4 & 2 & 2 & 2 & 2 & 2 & 2 & 2 & 2 & 2 & 2 & 2 & 2 \\
\end{array}\vspace{1mm} \\
t=3:&
\begin{array}{lllllllllllllllllllllllllll}
 1 & 1 & 1 & 1 & 1 & 1 & 1 & 1 & 1 & 1 & 1 & 1 & 1 & 1 & 1 & 2 & 2 & 2 & 2 & 1 & 1 & 1 & 1 & 1 & 1 & 1 & 1 \\
 2 & 2 & 2 & 2 & 2 & 2 & 2 & 2 & 2 & 2 & 2 & 2 & 2 & \bar{5} & 6 & \bar{3} & \bar{4} & \bar{5} & 4 & 4 & 2 & 2 & 2 & 2 & 2 & 2 & 2 \\
\end{array}\vspace{1mm} \\
t=4:&\begin{array}{lllllllllllllllllllllllllll}
 1 & 1 & 1 & 1 & 1 & 1 & 1 & 1 & 1 & 1 & 1 & 1 & 1 & 1 & 1 & 1 & 1 & 1 & 1 & 1 & 2 & 2 & 2 & 2 & 1 & 1 & 1 \\
 2 & 2 & 2 & 2 & 2 & 2 & 2 & 2 & 2 & 2 & 2 & 2 & 2 & 2 & 2 & \bar{5} & 6 & 2 & 2 & 2 & \bar{3} & \bar{4} & \bar{5} & 4 & 4 & 2 & 2 \\
\end{array}
\end{array}
$$
In this example, the initial state corresponds to $z^0((3,2),(0,1,1,1,0,1,0,1))\otimes z^{-7}((0,2),(0,0,0,0,0,2,0,0)).$  We compute the combinatorial $R$-matrix for $A_1^{(1)}\oplus D_4^{(1)}$-crystals as follows: $\mathcal{R}^{\text{Aff}}(z^0((3,2),(0,1,1,1,0,1,0,1))\otimes z^{-7}((0,2),(0,0,0,0,0,\\2,0,0)))=z^{-7}((2,0),(0,0,0,1,0,1,0,0))\otimes z^0((1,4),(0,2,0,0,0,1,1,1))$.  This is in agreement with Theorem \ref{th:main}.
\section{Acknowledgments}
KCM was partially supported by the NSA grant H98230-12-1-0248.
This work was done in part during the visit of EAW to the University of S\~{a}o Paulo as a postdoctoral fellow.  This author is grateful to the University of S\~{a}o Paulo for their hospitality and to FAPESP for financial support (2011/12079-5).

\end{document}